\newtheorem{theorem}[subsection]{Theorem}
\newtheorem{lemma}[subsection]{Lemma}
\newtheorem{sublemma}[subsubsection]{Lemma}
\newtheorem{corollary}[subsection]{Corollary}
\newtheorem{definition}[subsection]{Definition}
\newcommand\testshape{family=\f@family; series=\f@series; shape=\f@shape.}
\def\myemphInternal#1{\if n\f@shape%
\begingroup\itshape #1\endgroup\/%
\else\begingroup\sf\itshape #1\endgroup%
\fi}
\def\myemph{\futurelet\testchar\MaybeOptArgmyemph}
\def\MaybeOptArgmyemph{\ifx[\testchar \let\next\OptArgmyemph
                 \else \let\next\NoOptArgmyemph \fi \next}
\def\OptArgmyemph[#1]#2{\index{#1}\myemphInternal{#2}}
\def\NoOptArgmyemph#1{\myemphInternal{#1}}
\newcommand\id{\mathrm{id}}          
\newcommand\Int{\mathrm{Int}}        
\newcommand\Fix[1]{\mathrm{Fix}(#1)} 
\newcommand\wrm[1]{\mathop{\wr}\limits_{#1}} 
\newcommand\bR{\mathbb{R}}
\newcommand\bZ{\mathbb{Z}}
\newcommand\bN{\mathbb{N}}
\newcommand\Diff{\mathcal{D}}       
\newcommand\Orb{\mathcal{O}}        
\newcommand\Stab{\mathcal{S}}       
\newcommand\DiffId{\Diff_{\id}}     
\newcommand\StabId{\Stab_{\id}}     
\newcommand\DiffPl{\Diff^{+}}      
\newcommand\Cinfty{\mathcal{C}^{\infty}}
\newcommand\Ci[2]{\mathcal{C}^{\infty}(#1,#2)}               
\newcommand\Morse[2]{\mathcal{M}(#1,#2)}                     
\newcommand\Torus{T^2}             
\newcommand\PrjPlane{\bR{P}^2}    
\newcommand\Stabilizer[1]{\Stab(#1)}
\newcommand\StabilizerId[1]{\StabId(#1)}
\newcommand\StabilizerIsotId[1]{\Stab'(#1)}
\newcommand\Orbit[1]{\Orb(#1)}
\newcommand\FolStab{\Delta} 
\newcommand\FolStabilizer[1]{\FolStab(#1)}
\newcommand\FolStabilizerNbh[1]{\FolStab_{{\nb}}(#1)}
\newcommand\FolStabilizerIsotId[1]{\FolStab'(#1)}
\newcommand\FolStabilizerNbhIsotId[1]{\FolStab'_{{\nb}}(#1)}
\newcommand\GKR{\mathbf{G}}
\newcommand\GrpKR[1]{\GKR(#1)}
\newcommand\GrpKRIsotId[1]{\GKR'(#1)}
\newcommand\GrpKRNbh[1]{\GKR_{{\nb}}(#1)}
\newcommand\GrpKRNbhIsotId[1]{\GKR'_{{\nb}}(#1)}
\newcommand\tDiff{\cov{\Diff}}            
\newcommand\tDiffPl{\cov{\Diff}^{+}}      
\newcommand\tDiffId{\cov{\Diff}_{\id}}    
\newcommand\tStab{\cov{\Stab}}       
\newcommand\tStabilizer[1]{\tStab(#1)}
\newcommand\tStabilizerNbh[1]{\tStab_{\nb}(#1)}
\newcommand\SO{\mathrm{SO}}
\newcommand\Kman{K}
\newcommand\Mman{M}
\newcommand\Nman{N}
\newcommand\Pman{P}
\newcommand\Qman{Q}
\newcommand\Uman{U}
\newcommand\Vman{V}
\newcommand\Wman{W}
\newcommand\Xman{X}
\newcommand\Yman{Y}
\newcommand\cov[1]{\widetilde{#1}} %
\newcommand\scov[1]{\tilde{#1}} %
\newcommand\DiffIdM{\DiffId(\Mman)}
\newcommand\func{f}
\newcommand\dif{h}
\newcommand\tdif{\scov{\dif}}
\newcommand\fSing{\Sigma_{\func}}
\newcommand\ocolor[1]{#1}
\newcommand\ncolor[1]{#1}
\newcommand   \Surf{\ocolor{\Mman}}
\newcommand   \OSurf{\ocolor{\Mman}}
\newcommand\OSubSurf{\ocolor{\Vman}}
\newcommand\OSubMan {\ocolor{\Xman}}
\newcommand      \ox{\ocolor{x}}
\newcommand    \odif{\ocolor{q}}
\newcommand   \ofunc{\ocolor{g}}
\newcommand   \NSurf{\ncolor{\Nman}}
\newcommand\NSubSurf{\ncolor{\Wman}}
\newcommand \NSubMan{\ncolor{\Yman}}
\newcommand      \nx{\ncolor{y}}
\newcommand    \ndif{\ncolor{h}}
\newcommand   \nfunc{\ncolor{\func}}
\newcommand\Cyl{\ocolor{A}}
\newcommand\MBand{\ncolor{B}}
\newcommand\XXi[1]{\ocolor{\Xman}_{#1}}
\newcommand\YYi[1]{\ncolor{\Yman}_{#1}}
\newcommand\dimM{{\color{WildStrawberry}m}}
\newcommand\KerSAct{\Qman_{\func}}
\newcommand\Gf{\Gamma(\func)}
\newcommand{\nb}{\mathrm{nb}}
\newcommand\DiffNbh{\Diff_{{\nb}}}     
\newcommand\StabilizerNbh[1]{\Stab_{{\nb}}(#1)}  
\newcommand\StabilizerNbhIsotId[1]{\Stab'_{{\nb}}(#1)}  
\newcommand\OrbitPathComp[2]{\Orb_{#2}(#1)}      
\newcommand\Stabf{\Stabilizer{\func}}
\newcommand\FSp[1]{\mathcal{F}(#1,\Pman)}
\newcommand\CrComp{K}
\newcommand\CompSet{\mathbf{Y}}
\newcommand\PlCompSet{\hat{\CompSet}}
\newcommand\DAFunc[1]{\Theta(#1)}
\newcommand\DFunc{\DAFunc{\fld}}
\newcommand\Sh[1]{\varphi_{#1}}
\newcommand\crLev{K}
\newcommand\NX{\Uman_{\NSubMan}}
\newcommand\NtX{\Uman_{\OSubMan}}
\newcommand\fld{F}
\newcommand\flow{\mathbf{F}}
\newcommand\orb{\omega}
\newcommand\ST[1]{\mathcal{P}_{\func}(#1)}
\newcommand\bdEdge{$(\mathcal{B})$}
\newcommand\nullEdge{$(\mathcal{N})$}
\title{Homotopy properties of smooth functions on the M{\"o}bius band}
\author{Iryna Kuznietsova, Sergiy Maksymenko}
\address{Department of Algebra and Topology, Institute of Mathematics of NAS of Ukraine, Tereshchenkivska str. 3, Kyiv, 01601, Ukraine}
\curraddr{}
\email{kuznietsova@imath.kiev.ua, maks@imath.kiev.ua}
\subjclass[2000]{57S05, 57R45, 37C05}
\keywords{Diffeomorphism, Morse function}
\begin{document}

\begin{abstract}
Let $B$ be a M\"obius band and $f:B \to \mathbb{R}$ be a Morse map taking a constant value on $\partial B$, and $\mathcal{S}(f,\partial B)$ be the group of diffeomorphisms $h$ of $B$ fixed on $\partial B$ and preserving $f$ in the sense that $f\circ h = f$.
Under certain assumptions on $f$ we compute the group $\pi_0\mathcal{S}(f,\partial B)$ of isotopy classes of such diffeomorphisms.
In fact, those computations hold for functions $f:B\to\mathbb{R}$ whose germs at critical points are smoothly equivalent to homogeneous polynomials $\mathbb{R}^2\to\mathbb{R}$ without multiple factors.

Together with previous results of the second author this allows to compute similar groups for certain classes of smooth functions $f:N\to\mathbb{R}$ on non-orientable surfaces $N$.
\end{abstract}

\maketitle

\section{Main result}
Let $\Surf$ be a smooth compact surface, i.e.\! a 2-dimensional manifold, which can be disconnected, non-orientable, and have a non-empty boundary, and $\Pman$ be either a real line $\bR$ or a circle $S^1$.
Then the group $\Diff(\Surf)$ of $\Cinfty$-diffeomorphisms of $\Surf$ acts from the right on the space of smooth maps $\Cinfty(\Surf,\Pman)$ defined by the following rule: the result of the action of a diffeomorphism $\dif\in \Diff(\Surf)$ on $\func\in\Ci{\Surf}{\Pman}$ is the composition $f\circ h$.
Then for each $\func\in\Ci{\Surf}{\Pman}$ one can define the \textit{stabilizer} of $f$
\[ \Stabilizer{\func}=\{ \dif\in \Diff(\Surf)\ |\  \func\circ\dif=\func \} \]
and its \textit{orbit}
\[ \Orbit{\func}=\{\func\circ\dif \mid \dif\in \Diff(\Surf)\}\]
with respect to the above action.

More generally, denote by $\Diff(\Surf,\Xman)$ the group of diffeomorphisms of $\Surf$ fixed on a closed subset $\Xman\subset\Surf$.
Let also
\begin{align*}
 \Stabilizer{\func,\Xman} &= \Stabf\cap\Diff(\Surf,\Xman) &
& \text{and}&
 \Orbit{\func,\Xman} &= \{\func\circ\dif \mid \dif\in \Diff(\Surf,\Xman)\}.
\end{align*}
We will endow $\Diff(\Surf,\Xman)$ and $\Ci{\Surf}{\Pman}$ with Whitney $C^\infty$-topologies and their subspaces $\Stabilizer{\func,\Xman}$ and $\Orbit{\func,\Xman}$ with induced ones.
Then they yield certain topologies on the stabilizers and orbits of maps $\func\in \Cinfty(\Surf,\Pman)$.
Let also $\DiffId(\Surf,\Xman)$ and $\StabilizerId{\func,\Xman}$ be the identity path components of $\Diff(\Surf,\Xman)$ and $\Stabilizer{\func,\Xman}$, and $\OrbitPathComp{\func,\Xman}{\func}$ be the path component of $\Orbit{\func,\Xman}$ containing $\func$.
If $\Xman=\varnothing$, then we will omit $\Xman$ from notation.

In the present paper we continue study of the homotopy types of $\Stabilizer{\func,\Xman}$ and $\Orbit{\func,\Xman}$, see below for references and the history of the problem.
Our main results, Theorems~\ref{th:unique_cr_level} and~\ref{th:pi0S_struct}, concern with the group $\pi_0\Stabilizer{\func,\Xman}$ for the case when $\Surf$ is a M\"obius band, $\Xman=\partial\Surf$, and $\func:\Surf\to\Pman$ belongs to the following space of maps~$\FSp{\Surf}$.

\begin{definition}\label{def:classF}
Let $\FSp{\Surf}$ be the subset of $\Ci{\Surf}{\Pman}$ consisting of maps $\func:\Surf\to\Pman$ having the following properties:
\begin{enumerate}[label={\rm(\arabic*)}]
\item\label{enum:classF:const_by_bd}
the map $\func$ takes constant values at each connected component of $\partial\Surf$ and has no critical points on it;
\item\label{enum:classF:loc_equ_poly}
for every critical point $z$ of $\func$ the germ of $\func$ at $z$ is $\Cinfty$ equivalent to some homogeneous polynomial $v\colon \bR^2 \to \bR$ without multiple factors.
\end{enumerate}

A map $\func\in\Ci{\Surf}{\Pman}$ will be called \myemph{Morse}, if it satisfies condition~\ref{enum:classF:const_by_bd} and all its critical points are non-degenerate.
Denote by $\Morse{\Surf}{\Pman}$ the space of all Morse maps.
A Morse map $\func$ is \myemph{generic}, if it takes distinct values at distinct critical points.
\end{definition}
Since the polynomial $\pm x^2 \pm y^2$ is homogeneous and has no multiple factors, it follows from Morse lemma that
\[
   \Morse{\Surf}{\Pman} \subset \FSp{\Surf}.
\]
Also notice that every $\func\in\FSp{\Surf}$ has only isolated critical points.
A structure of level set foliations near critical points of $\func\in\FSp{\Surf}$ is illustrated in Figure~\ref{fig:isol_crit_pt}.
A critical point of $\func\in\FSp{\Mman}$ which is not a local extreme will be called a \myemph{saddle}.
\begin{figure}[ht]
\begin{center}
\footnotesize
\begin{tabular}{ccccccc}
\includegraphics[height=1.5cm]{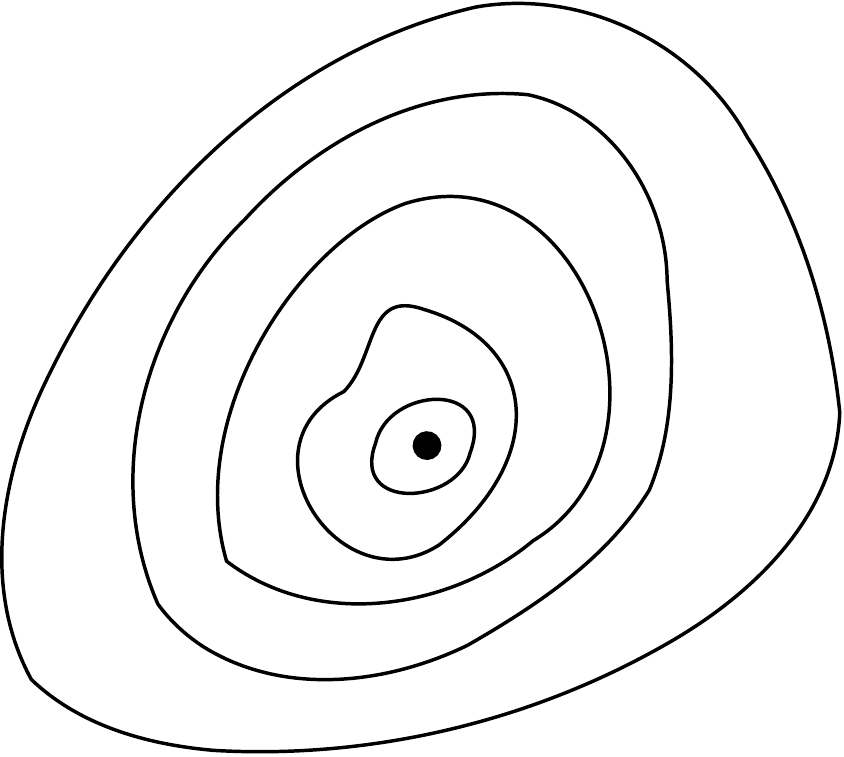} & \qquad\qquad &
\includegraphics[height=1.5cm]{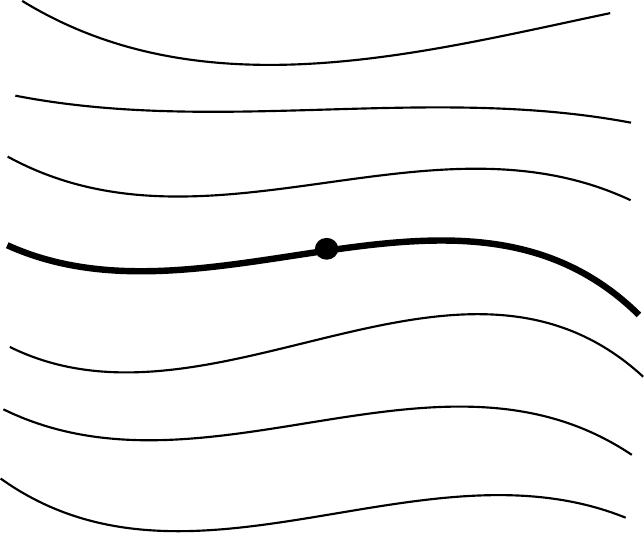}   & \qquad\qquad &
\includegraphics[height=1.5cm]{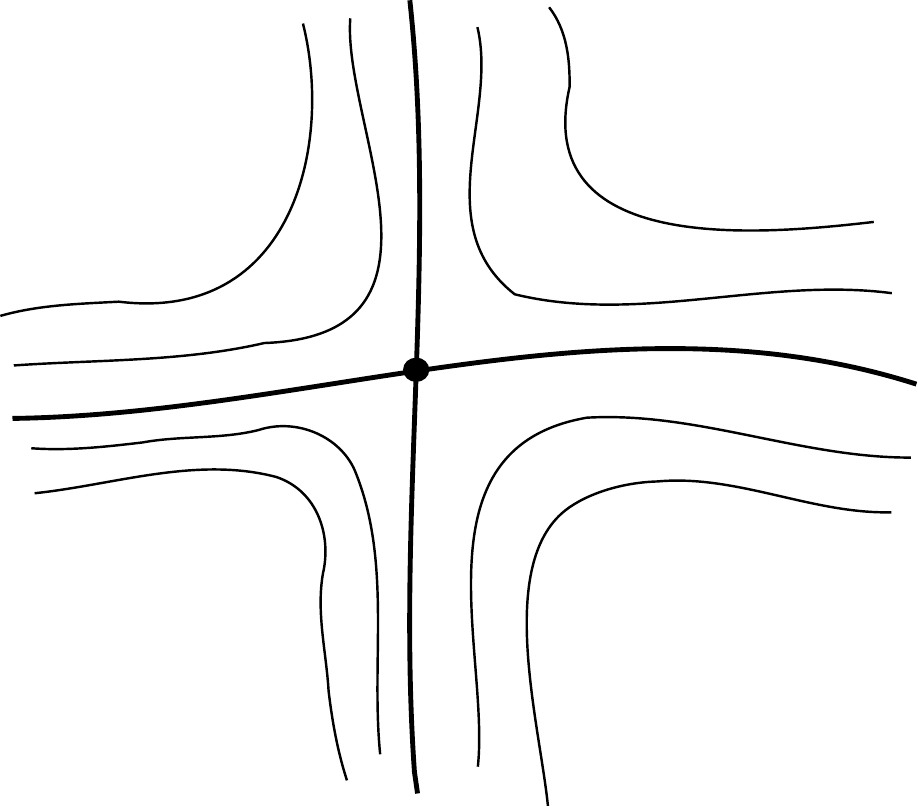}   & \qquad\qquad &
\includegraphics[height=1.5cm]{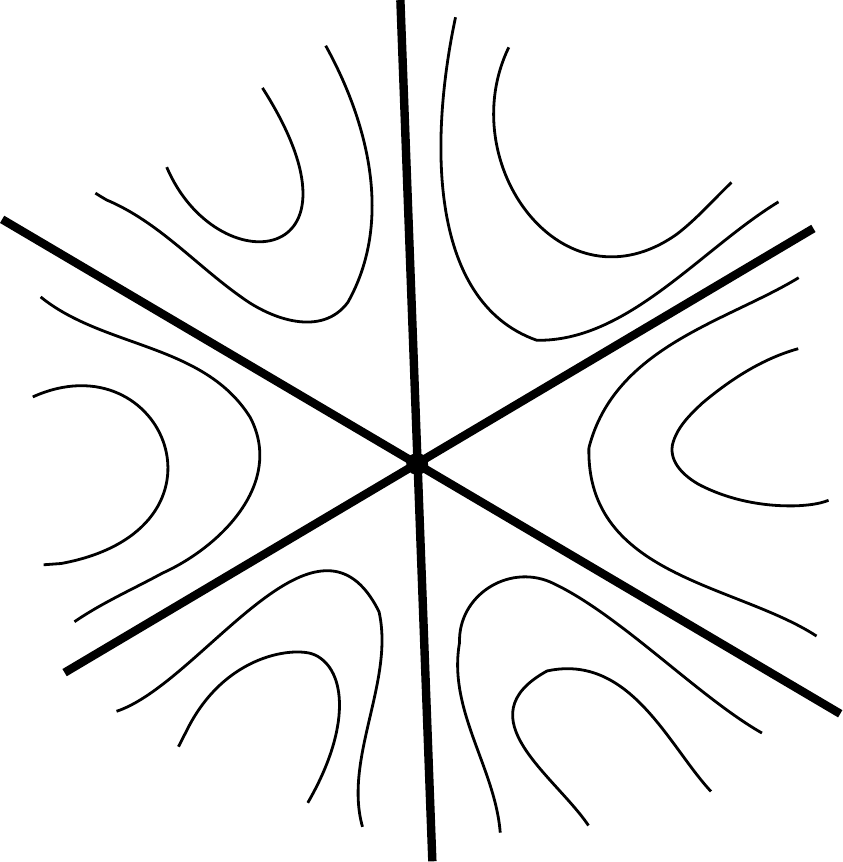}   \\
local extreme & &
              & &
saddles       & &
\end{tabular}
\caption{Topological structure of level-sets of maps from $\FSp{\Surf}$ near critical points}
\label{fig:isol_crit_pt}
\end{center}
\end{figure}

Let $\func\in \FSp{\Surf}$, $c\in\bR$, and $\Kman$ be a connected component of the level-set $\func^{-1}(c)$.
Then $\Kman$ will be called \myemph{regular} whenever it contains no critical points, and \myemph{critical} otherwise.
A connected component $\Uman$ of $\func^{-1}[c-\varepsilon, c+\varepsilon]$ containing $\Kman$ will be called an \myemph{$\func$-regular neighborhood} of $\Kman$ if $\Uman\setminus\Kman$ contains no critical points and does not intersect $\partial\Surf$.

Let $\Uman=\Uman_1 \sqcup \Uman_2 \sqcup \dots \sqcup \Uman_k$ be a disjoint union of connected one- and two-dimensional submanifolds of $\Surf$.
We will say that $\Uman$ is an \myemph{$\func$-adopted} submanifold if for each $i=1,\ldots,k$ the following conditions hold:
\begin{enumerate}
\item if $\dim\Uman_i = 1$, then $\Uman_i$ is a regular connected component of some level-set $\func^{-1}(c)$, $c\in\bR$;
\item if $\dim\Uman_i = 2$, then the connected components of the boundary $\partial\Uman_i$ are regular connected components of some level-sets of $\func$.
\end{enumerate}
In particular, an $\func$-regular neighborhood is an $\func$-adopted \myemph{subsurface}.
Evidently, if $\Uman$ is adopted subsurface, then the restriction $\func|_{\Uman}$ belongs to the space $\FSp{\Uman}$.

Denote
\[
\StabilizerIsotId{\func,\Xman} = \Stabilizer{\func} \cap\DiffId(\Surf,\Xman).
\]
The following statement collects known information about the homotopy types of stabilizers and orbits of $\func\in\FSp{\Surf}$.

\begin{theorem}\label{th:stab_orb_full_info}
Let $\Surf$ be a connected compact surface, $\func\in\FSp{\Surf}$, and $\Xman$ be a union of finitely many connected components of some level-sets of $\func$ and some critical points of $\func$.
Then the following statements hold.

\begin{enumerate}[leftmargin=*, label={\rm(\arabic*)}, wide, itemsep=0.6ex]
\item\label{enum:th:stab_orb_full_info:Serre}
The map $p:\Diff(\Surf,\Xman)\to\Orbit{\func,\Xman}$ defined by $p(\dif) = \func\circ\dif$ is a locally trivial principal $\Stabilizer{\func,\Xman}$-fibration.
In particular, the restriction $p:\DiffId(\Surf,\Xman)\to\OrbitPathComp{\func,\Xman}{\func}$ is a locally trivial principal $\StabilizerIsotId{\func,\Xman}$-fibration,
\cite{Sergeraert:ASENS:1972}, \cite{Maksymenko:AGAG:2006}.

\item\label{enum:th:stab_orb_full_info:Sidf}
The group $\StabilizerId{\func,\Xman}$ is homotopy equivalent to the circle if and only if the following condition holds:
\begin{itemize}[label=$\bullet$]
\item $\Surf$ is orientable, $\chi(\Surf)\geq0$, $\Xman$ is a collection of at most $\chi(\Surf)$ critical points of $\func$, and each critical point of $\func$ is a \myemph{non-degenerate local extreme}.
\end{itemize}
Otherwise, $\StabilizerId{\func,\Xman}$ is contractible, and in this case
\begin{enumerate}[leftmargin=9ex, label={\rm(\alph*)}, itemsep=0.6ex]
\item\label{enum:Of:S2_min_max}
if $\Mman=S^2$, $\Xman=\varnothing$, and $\func$ is Morse having exactly two critical points (minimum and maximum), then $\OrbitPathComp{\func}{\func}$ is homotopy equivalent to $S^2$;
\item\label{enum:Of:S2_RP2}
otherwise, if $\Mman=S^2$ or $\bR{P}^2$, and $\Xman=\varnothing$, then $\pi_k\OrbitPathComp{\func}{\func} \cong \pi_k S^3$ for $k\geq2$;

\item\label{enum:Of:aspherical}
otherwise, $\pi_k\OrbitPathComp{\func,\Xman}{\func} = 0$ for $k\geq2$, \cite{Maksymenko:AGAG:2006}, \cite{Maksymenko:ProcIM:ENG:2010}.
\end{enumerate}

\item\label{enum:th:stab_orb_full_info:Sid_contr}
Suppose $\StabilizerId{\func,\Xman}$ is contractible.
\begin{equation}\label{equ:exact_seq_for_pi1OfX}
1 \to \pi_1\DiffId(\Surf,\Xman) \xrightarrow{~p~} \pi_1\Orbit{\func,\Xman} \to \pi_0\StabilizerIsotId{\func,\Xman} \to 1.
\end{equation}
If $\chi(\Surf) < |\Xman|$, the group $\DiffId(\Surf,\Xman)$ is contractible as well, and~\eqref{equ:exact_seq_for_pi1OfX} yields an isomorphism
\[\pi_1\OrbitPathComp{\func,\Xman}{\func} \cong \pi_0\StabilizerIsotId{\func,\Xman},\]
see \cite{Maksymenko:AGAG:2006}, \cite{Maksymenko:ProcIM:ENG:2010}.

\item\label{enum:th:stab_orb_full_info:Off_V}
$\OrbitPathComp{\func,\Xman}{\func} = \OrbitPathComp{\func,\Xman \cup \Vman}{\func}$ for any union of boundary components $\Vman$ of $\Surf$, \cite{Maksymenko:UMZ:ENG:2012}.

\item\label{enum:th:stab_orb_full_info:Of_braid}
If $\func$ is Morse and has exactly $n$ critical points, then $\OrbitPathComp{\func}{\func}$ is homotopy equivalent to a certain covering space of the $n$-th configuration space of $\Surf$, which in turn is homotopy equivalent to some (possibly non-compact) $(2n-1)$-dimensional CW-complex.
In particular, $\pi_1\OrbitPathComp{\func}{\func}$ is a subgroup of the $n$-th braid group $\mathcal{B}_{n}(\Surf)$ of $\Surf$, \cite{Maksymenko:TrMath:2008}.

\item\label{enum:th:stab_orb_full_info:Of_generic}
Suppose $\func$ is \myemph{generic}.
If $\Surf=S^2$ and $\func$ has exactly two critical points being local extremes, then $\OrbitPathComp{\func}{\func}$ is homotopy equivalent to $S^2$.
Otherwise, if $\Surf=S^2$ or $\PrjPlane$, then $\OrbitPathComp{\func}{\func}$ is homotopy equivalent to $\SO(3) \times (S^1)^k$ for some $k\geq0$.
In all other cases $\OrbitPathComp{\func}{\func}$ is homotopy equivalent to $(S^1)^k$ for some $k\geq0$, \cite{Maksymenko:AGAG:2006}.

\item\label{enum:th:orb_full_info}
Suppose $\Surf$ is orientable, $\func\in\Morse{\Surf}{\bR}$, and $\chi(\Surf) < |\Fix{\StabilizerIsotId{\func}}|$ (which holds e.g.\! if $\chi(\Surf)<0$ or if $\func$ is generic and has at least one saddle critical point).
Then $\OrbitPathComp{\func}{\func}$ has the homotopy type of the quotient $(S^1)^k/ {G}$ of $(S^1)^k$ by a free action of some finite group ${G}$
if $\Surf\not=S^2$, and the homotopy type of $((S^1)^k/{G}) \times \SO(3)$ if $\Surf=S^2$, \cite{Kudryavtseva:SpecMF:VMU:2012}, \cite{Kudryavtseva:MathNotes:2012}, \cite{Kudryavtseva:MatSb:ENG:2013}, and also~\cite{Kudryavtseva:ENG:DAN2016} for extensions to functions with prescribed local singularities of $A_{\mu}$-types, $\mu\in\bN$.
\end{enumerate}
\end{theorem}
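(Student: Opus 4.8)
The statement is a compilation of results from the cited papers rather than a theorem with a self-contained argument, so the plan is to recall the one mechanism underlying all of them and then indicate which reference supplies which item, together with the short deductions connecting the items.

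That mechanism is part~\ref{enum:th:stab_orb_full_info:Serre} itself together with the classical homotopy types of surface diffeomorphism groups. For~\ref{enum:th:stab_orb_full_info:Serre} I would quote Sergeraert's construction of local sections of the map $\dif\mapsto\func\circ\dif$ near a ``tame'' function, extended by the second author to every $\func\in\FSp{\Surf}$ and to a nonempty fixed set $\Xman$ \cite{Sergeraert:ASENS:1972}, \cite{Maksymenko:AGAG:2006}: such a local section, translated around by $\Diff(\Surf,\Xman)$, yields local triviality, and the resulting bundle is automatically principal for the right $\Stabilizer{\func,\Xman}$-action, the restriction to identity components being formal. Once $p$ is a fibration, its long exact homotopy sequence converts every question about $\pi_{k}\OrbitPathComp{\func,\Xman}{\func}$ into one about $\pi_{k}\Diff(\Surf,\Xman)$ and $\pi_{k}\Stabilizer{\func,\Xman}$; here I would feed in Smale's $\DiffId(S^2)\simeq\SO(3)$, the analogous $\DiffId(\PrjPlane)\simeq\SO(3)$, and the contractibility of $\DiffId(\Surf,\Xman)$ for every remaining surface once $\chi(\Surf)<|\Xman|$ or a boundary curve is fixed. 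Granting the homotopy type of $\StabilizerId{\func,\Xman}$ from \cite{Maksymenko:AGAG:2006}, \cite{Maksymenko:ProcIM:ENG:2010}, items~\ref{enum:th:stab_orb_full_info:Sidf}\,(a)--(c) and~\ref{enum:th:stab_orb_full_info:Sid_contr} fall out: (a) is the homotopy-fibre identification $\SO(3)/S^1\simeq S^2$ when $\StabilizerId{\func}$ is the rotation circle; (b) is $\pi_{k}\OrbitPathComp{\func}{\func}\cong\pi_{k}\SO(3)\cong\pi_{k}S^3$ for $k\ge2$ in the contractible-fibre case over $S^2$ or $\PrjPlane$; (c) is vanishing above degree one; and~\eqref{equ:exact_seq_for_pi1OfX} is the five-term tail with $\pi_1\StabilizerIsotId{\func,\Xman}=0$, the extra isomorphism being the sub-case $\DiffId(\Surf,\Xman)\simeq\ast$.

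The stabilizer input itself I would take from the Kronrod--Reeb graph technique of \cite{Maksymenko:AGAG:2006}, \cite{Maksymenko:ProcIM:ENG:2010}: cut $\Surf$ along the regular level components through which $\Xman$ does not pass into elementary blocks, check that $\StabilizerId{\func,\Xman}$ deformation-retracts on each block either onto a circle of ``fibrewise rotations'' or onto a point, and glue those retractions along the graph; a global circle factor survives precisely in the orientable, $\chi(\Surf)\ge0$ case in which $\Xman$ consists of at most $\chi(\Surf)$ critical points and every critical point of $\func$ is a non-degenerate extreme, and otherwise the blockwise retractions assemble to a contraction. Item~\ref{enum:th:stab_orb_full_info:Off_V} is the observation that a $\func$-preserving diffeomorphism in $\DiffId(\Surf,\Xman)$ can be isotoped \emph{inside} $\Orbit{\func,\Xman}$ to one that is additionally fixed on a boundary curve $\Vman$, since near $\partial\Surf$ the map $\func$ is a submersion with the boundary curves as its level sets \cite{Maksymenko:UMZ:ENG:2012}. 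Item~\ref{enum:th:stab_orb_full_info:Of_braid} is the fibration of $\OrbitPathComp{\func}{\func}$ over the $n$-point configuration space of $\Surf$ sending $\func\circ\dif$ to the set of critical points of $\func\circ\dif$, whose fibre is combinatorially rigid; this gives the covering-space description and the embedding of $\pi_1$ into $\mathcal{B}_{n}(\Surf)$ \cite{Maksymenko:TrMath:2008}. Finally, items~\ref{enum:th:stab_orb_full_info:Of_generic} and~\ref{enum:th:orb_full_info} are the specialisations to \emph{generic} Morse $\func$, where the Kronrod--Reeb graph analysis of $\pi_0\StabilizerIsotId{\func}$ produces a finite group acting freely on the torus $(S^1)^k$ assembled from the rotation blocks, with the extra $\SO(3)$ exactly over $S^2$ and $\PrjPlane$ \cite{Maksymenko:AGAG:2006}, \cite{Kudryavtseva:SpecMF:VMU:2012}, \cite{Kudryavtseva:MathNotes:2012}, \cite{Kudryavtseva:MatSb:ENG:2013}, \cite{Kudryavtseva:ENG:DAN2016}. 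There is no genuine obstacle in this compilation; the delicate points are the exceptional behaviour over $S^2$ and $\PrjPlane$ (which is why these are singled out throughout the list) and the precise identification of the finite group and of the exponent $k$ in~\ref{enum:th:orb_full_info}, both of which require the full combinatorics of the action of $\pi_0\Stabilizer{\func}$ on the Kronrod--Reeb graph of $\func$.
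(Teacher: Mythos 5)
Your reading is correct: the paper gives no proof of this theorem, presenting it purely as a survey with citations embedded in each item, and your proposal reproduces exactly that compilation together with the standard long-exact-sequence deductions that link items (2)--(3) back to the fibration in (1). Your explanation of (2)(a) via $\SO(3)/S^1\simeq S^2$ is the right mechanism and correctly places that case in the $\StabilizerId{\func}\simeq S^1$ branch rather than the contractible one, implicitly tidying a minor imprecision in the paper's listing.
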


Results in~\ref{enum:th:orb_full_info} are obtained by E.~Kudryavtseva.

Notice that in the case~\ref{enum:Of:aspherical}, e.g. when if $\Surf$ is distinct from $2$-sphere and projective plane, then $\OrbitPathComp{\func}{\func}$ is \myemph{aspherical}, and so its homotopy type is completely determined by the fundamental group $\pi_1\OrbitPathComp{\func}{\func}$.

If $\func$ is generic, then by~\ref{enum:th:stab_orb_full_info:Of_generic} $\OrbitPathComp{\func}{\func}$ is homotopy equivalent to some torus $(S^1)^k$, whence
$\pi_1\OrbitPathComp{\func}{\func} = \bZ^k$ is free abelian.

Suppose $\Surf$ is orientable and differs from $S^2$.
Then by~\ref{enum:th:orb_full_info} we have a certain free action of a finite group ${G}$ on the torus $(S^1)^k$.
Hence the quotient map $q:(S^1)^k \to (S^1)^k/{G}$ is a locally covering map, whence we have the following short exact sequence:
\[
1 \to \pi_1 (S^1)^k \to \pi_1 (S^1)^k /{G} \to {G} \to 1,
\]
which due to~\ref{enum:th:orb_full_info} can be rewritten as follows:
\[
1 \to \bZ^k \to \pi_1\OrbitPathComp{\func}{\func} \to {G} \to 1.
\]
This sequence was first discovered in~\cite[Eq.~(1.6)]{Maksymenko:AGAG:2006}.
In particular, it implies that $\pi_1\OrbitPathComp{\func}{\func}$ is a \myemph{crystallographic} group, i.e.\! contains a free abelian normal subgroup of finite index.
Moreover, due to~\ref{enum:th:stab_orb_full_info:Of_braid} $\pi_1\OrbitPathComp{\func}{\func}$ is also a subgroup of a certain braid group $\mathcal{B}_{n}(\Surf)$ of $\Surf$.
Since $\mathcal{B}_{n}(\Surf)$ has no elements of finite order, so does $\pi_1\OrbitPathComp{\func}{\func}$, and therefore it is a \myemph{Bieberbach} group.

To describe further known results, for every $\func$-adopted connected subsurface $\Xman \subset \Surf$ let
\[
\ST{\Xman} := \pi_1\OrbitPathComp{\func|_{\Xman}}{\func|_{\Xman}}.
\]
be the fundamental group of the orbit of the restriction of $\func$ to $\Xman$.
In particular, if either $\partial\Surf$ is non-empty or $\chi(\Surf)<0$, then we get from Theorem~\ref{th:stab_orb_full_info} the following isomorphisms:
\begin{equation}\label{equ:pi0stab_pi1orb}
    \ST{\Surf} := \pi_1\OrbitPathComp{\func}{\func}
      \ \stackrel{\ref{enum:th:stab_orb_full_info:Off_V}}{\cong} \
    \pi_1\OrbitPathComp{\func,\partial\Surf}{\func}
    \ \stackrel{\ref{enum:th:stab_orb_full_info:Sid_contr}}{\cong} \
    \pi_0\StabilizerIsotId{\func, \partial\Surf}.
\end{equation}

The following statement summarizes several results about $\ST{\Surf}$.

\begin{theorem}\label{th:Stabf_reduction}
{\rm
\cite{Maksymenko:MFAT:2010},
\cite{MaksymenkoFeshchenko:UMZ:ENG:2014},
\cite{MaksymenkoFeshchenko:MS:2015},
\cite{MaksymenkoFeshchenko:MFAT:2015},
\cite{Feshchenko:Zb:2015},
\cite{Feshchenko:MFAT:2016}.}
Let $\Surf$ be a compact surface.
Then for each $\func\in\FSp{\Surf}$ there exist mutually disjoint $\func$-adopted subsuraces
$\YYi{1},\ldots,\YYi{n} \subset\Surf$ having the following properties.
\begin{enumerate}[leftmargin=*, label={\rm(\arabic*)}]
\item\label{enum:PM:chi_neg}
If $\chi(\Surf)<0$, then each $\YYi{i}$ is either a $2$-disk or an annulus or a M\"obius band, and
\[ \ST{\Surf} \cong \prod_{i=1}^{n} \ST{\YYi{i}}. \]
\item\label{enum:PM:torus}
Suppose $\Surf = \Torus$ is a $2$-torus.
\begin{enumerate}[leftmargin=*, label={\rm(\alph*)}]
\item
If the Kronrod-Reeb graph $\Gf$ of $\func$ (see~\S\ref{sect:KRGraph} for definition) is a tree, then each $\YYi{i}$ is a $2$-disk, and
\[
\ST{\Torus} \cong \bigl( \prod_{k=1}^{n} \ST{\YYi{k}} \bigr)\wrm{a,b} \bZ^2.
\]
for some $a,b\geq1$.

\item
Otherwise, $\Gf$ contains a unique cycle, $n=1$, $\YYi{1}$ is an annulus, and
\[ \ST{\Torus} \cong \ST{\YYi{1}} \wrm{k} \bZ\]
for some $k\geq1$.
\end{enumerate}
\end{enumerate}
\end{theorem}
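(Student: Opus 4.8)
The statement collects results from the cited papers, so the plan is to describe how they combine rather than to reprove each. The common scheme has three steps. First, convert $\ST{\Surf}=\pi_1\OrbitPathComp{\func}{\func}$ into a group of isotopy classes of $\func$-preserving diffeomorphisms by means of Theorem~\ref{th:stab_orb_full_info}. Second, choose a finite family of pairwise disjoint regular level-set components of $\func$, cut $\Surf$ along it, and let $\YYi{1},\dots,\YYi{n}$ be the closures of the pieces, chosen maximal so that each restriction $\func|_{\YYi{i}}\in\FSp{\YYi{i}}$ admits no further essential regular level-set component in its interior. Third, show that restriction of diffeomorphisms to the pieces reassembles $\ST{\Surf}$ from the groups $\ST{\YYi{i}}=\pi_1\OrbitPathComp{\func|_{\YYi{i}}}{\func|_{\YYi{i}}}$ together with combinatorial data carried by the Kronrod--Reeb graph $\Gf$ and, when $\Surf$ is closed, by the ambient group $\DiffId(\Surf)$. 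An analysis of $\Gf$ shows that a piece with no interior essential regular level-set component is a $2$-disk, an annulus, or a M\"obius band --- the compact surfaces with boundary and $\chi\geq0$ --- which is exactly where the computation of $\ST{\MobiusBand}$ carried out in the present paper enters.

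When $\chi(\Surf)<0$ the argument is the cleanest. Here~\eqref{equ:pi0stab_pi1orb} identifies $\ST{\Surf}$ with $\pi_0\StabilizerIsotId{\func,\partial\Surf}$, so everything takes place inside $\DiffId(\Surf,\partial\Surf)$. The cutting curves $\Cman_1,\dots,\Cman_m$ may be taken essential, and for $\chi(\Surf)<0$ the Dehn twists about them are non-trivial and independent in $\pi_0\Diff(\Surf,\partial\Surf)$, hence none of them lies in $\StabilizerIsotId{\func,\partial\Surf}$. Combining this with the change-of-coordinates principle for surfaces of negative Euler characteristic --- which allows one to isotope a diffeomorphism from $\StabilizerIsotId{\func,\partial\Surf}$ so that it fixes each $\Cman_j$ pointwise and then to restrict it to the pieces --- one obtains an isomorphism
\[
   \pi_0\StabilizerIsotId{\func,\partial\Surf}\ \cong\ \prod_{i=1}^{n}\pi_0\StabilizerIsotId{\func|_{\YYi{i}},\partial\YYi{i}}\ =\ \prod_{i=1}^{n}\ST{\YYi{i}},
\]
which is assertion~\ref{enum:PM:chi_neg}.

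For $\Surf=\Torus$ we have $\chi(\Torus)=0$ and $\partial\Torus=\varnothing$, so~\eqref{equ:pi0stab_pi1orb} is unavailable and is replaced by the exact sequence~\eqref{equ:exact_seq_for_pi1OfX}; since $\DiffId(\Torus)\simeq\Torus$ this reads
\[
   1\ \longrightarrow\ \bZ^2\ \longrightarrow\ \ST{\Torus}\ \longrightarrow\ \pi_0\StabilizerIsotId{\func}\ \longrightarrow\ 1 .
\]
Now the twists about the cutting curves no longer vanish, and together with the subgroup $\bZ^2=\pi_1\DiffId(\Torus)$ they produce the wreath-type factors. One splits according to the cycle rank of $\Gf$. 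If $\Gf$ is a tree, a maximal cutting family can be chosen so that all pieces $\YYi{i}$ are $2$-disks, and the remaining global identifications (together with the motion of the $\YYi{i}$ along loops in $\DiffId(\Torus)$) contribute a $\bZ^2$ acting by cyclic shifts with two periods $a,b\geq1$, giving $\ST{\Torus}\cong\bigl(\prod_{k=1}^{n}\ST{\YYi{k}}\bigr)\wrm{a,b}\bZ^2$. If $\Gf$ is not a tree it has a unique cycle, there is a single essential non-separating regular level-set component whose complement is one annulus $\YYi{1}$, and tracking the gluing identification (a single shift of period $k\geq1$) gives $\ST{\Torus}\cong\ST{\YYi{1}}\wrm{k}\bZ$.

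I expect the third step to be the main obstacle. Surjectivity of restriction requires that every $\func$-preserving diffeomorphism representing a class in $\ST{\Surf}$ can be isotoped, within the relevant group, so as to fix each cutting curve $\Cman_j$ pointwise and then be recovered from its restrictions to the pieces $\YYi{i}$; this combines uniqueness up to isotopy of a simple closed curve inside its isotopy class with the local normal forms of $\func$ near critical and regular level-set components. Injectivity in the $\chi<0$ case amounts to the subgroup generated by the Dehn twists about the $\Cman_j$ meeting $\StabilizerIsotId{\func,\partial\Surf}$ trivially, which uses independence and non-triviality of these twists for $\chi(\Surf)<0$; this is exactly the property that fails for $\Torus$ and accounts for the wreath products there. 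Finally, identifying the precise wreath-product data --- the integers $a,b,k$ and the permutation action of the realizable automorphisms of $\Gf$ on $\{\YYi{i}\}$ --- is the lengthy but essentially combinatorial part, and is carried out in the cited references.
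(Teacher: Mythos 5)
The paper does not prove Theorem~\ref{th:Stabf_reduction} --- it is stated as a synthesis of the cited references and then used as background --- so there is no internal proof to compare against; I'll assess your sketch as a standalone outline. The overall scheme (pass to $\pi_0$ of the isotopy stabilizer via~\eqref{equ:pi0stab_pi1orb}, decompose the surface, reassemble) is right, and your account of the torus case correctly locates the extra twists and the wreath structure. But two places are underspecified. First, the $\YYi{i}$ in the theorem are \emph{mutually disjoint} $\func$-adopted subsurfaces, so the construction is not ``cut along a family of regular level-set curves and take closures of the pieces'' (those closures would share their boundary circles): one removes an $\func$-regular neighborhood $\NSubSurf$ of an appropriate union of critical components and takes the $\YYi{i}$ to be the closures of the components of $\Surf\setminus\NSubSurf$. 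This is exactly what the paper does for the M\"obius band in Theorem~\ref{th:unique_cr_level}, and it is the form in which Theorem~\ref{th:hom_equ} applies to trade ``fixed on $\NSubSurf$'' for ``fixed on a neighborhood of the critical core.'' Second, and more substantively, for $\chi(\Surf)<0$ the product isomorphism requires more than ``the Dehn twists about the cutting curves are independent and lie outside $\StabilizerIsotId{\func,\partial\Surf}$'': one also needs that every $\dif\in\StabilizerIsotId{\func,\partial\Surf}$ fixes each $\YYi{i}$ setwise with orientation, i.e.\ the analogue of the quotient $\Stabilizer{\nfunc,\partial\MBand}/\KerSAct$ from Theorem~\ref{th:pi0S_struct} is trivial. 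That triviality comes from the incompressible-subsurface machinery of \cite{Maksymenko:MFAT:2010}, which uses $\chi(\Surf)<0$ essentially and is not a corollary of independence of Dehn twists; it can fail for $\chi\geq0$, which is exactly why the M\"obius band and torus need separate treatment and why wreath-product and extra $\bZ$ factors arise there.
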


Here $A\wrm{a,b}\bZ^2$ and $A\wrm{k}\bZ$ are \myemph{wreath} products of certain types, which are not essential for our considerations.
Notice that in this theorem $\ST{\Surf}$ and $\ST{\YYi{i}}$ can be replaced by either of the groups of type~\eqref{equ:pi0stab_pi1orb}.
On the other hand, $\ST{\Torus} = \pi_1\OrbitPathComp{\func}{\func}$ is not the same as $\pi_0\StabilizerIsotId{\func}$ since $\pi_1\Diff(\Torus)\cong \bZ^2$ and due to~\eqref{equ:exact_seq_for_pi1OfX} we have the following short exact sequence: $1 \to \bZ^2 \to \pi_1\OrbitPathComp{\func}{\func} \to \pi_0\StabilizerIsotId{\func} \to 1.$

Theorem~\ref{th:Stabf_reduction} shows that for most surfaces computation of $\ST{\Surf}$ reduces to the cases of $2$-disk, annulus and M\"obius band.
Structure of $\ST{\Surf}$ for $\Surf$ being $2$-disk and annulus is completely described in~\cite{Maksymenko:DefFuncI:2014}.
The remained open cases are $2$-sphere and all non-orientable surfaces.

Our aim is to describe the structure of $\ST{\Surf}$ for the case when $\Surf$ is a M\"obius band and under certain restrictions on $\func\in\FSp{\Surf}$.

\begin{theorem}\label{th:unique_cr_level}
Let $\MBand$ be a M\"obius band and $\nfunc\in\FSp{\MBand}$.
There exists a unique critical component $\CrComp$ of some level-set of $\nfunc$ with the following property:
if $\NSubSurf$ is an $\nfunc$-regular neighborhood of $\CrComp$ and $\YYi{0}, \YYi{1}, \dots, \YYi{n}$ are all the connected components of $\overline{\MBand\setminus\NSubSurf}$ enumerated so that $\partial\MBand\subset\YYi{0}$, then $\YYi{0}$ is an annulus $S^1\times[0,1]$, and each $\YYi{k}$, $k=1,\ldots,n$, is a $2$-disk.
In particular,
\begin{align*}
    \ndif(\CrComp)&=\CrComp,  &
    \ndif(\YYi{0})&=\YYi{0}, &
    \ndif\bigl(\mathop{\cup}\limits_{k=1}^{n}\YYi{k}\bigr) &= \mathop{\cup}\limits_{k=1}^{n}\YYi{k},
\end{align*}
for each $\ndif\in\Stabilizer{\nfunc}$.
\end{theorem}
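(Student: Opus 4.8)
The plan is to study the Kronrod-Reeb graph $\Gf$ of $\nfunc$, with the natural projection $\rho\colon\MBand\to\Gf$, and the first step is to prove that \emph{$\Gf$ is a tree}. Suppose it contained a cycle and pick an edge $e$ of that cycle. Since $e$ consists of regular values, for a point $t$ in its interior the level-set component $N=\rho^{-1}(t)$ is an embedded circle; because $e$ lies on a cycle it is not a bridge, so $\Gf\setminus\{t\}$ is connected, and as the components of $\MBand\setminus N$ correspond to those of $\Gf\setminus\{t\}$, the surface $\MBand\setminus N=\rho^{-1}(\Gf\setminus\{t\})$ is connected; i.e.\ $N$ is non-separating in $\MBand$. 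A non-separating simple closed curve in the M\"obius band is necessarily one-sided, so a tubular neighbourhood of $N$ in $\MBand$ is itself a M\"obius band. But $\nfunc$ has no critical values near $t$, so by Ehresmann's theorem the component of $\nfunc^{-1}(t-\delta,t+\delta)$ containing $N$ is diffeomorphic to $N\times(-\delta,\delta)$, an annulus, which is a two-sided tubular neighbourhood of $N$; since a tubular neighbourhood is unique up to ambient isotopy, this is a contradiction. Hence $\Gf$ is a tree.

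Next I would locate $\CrComp$. Cutting $\MBand$ along the boundaries of regular neighbourhoods of all critical level-set components exhibits $\MBand$ as a union of pieces — those regular neighbourhoods, annuli lying over the edges of $\Gf$, and a collar of $\partial\MBand$ — glued along circles, with dual graph a subdivision of $\Gf$, hence a tree. Collars, the annular pieces, and the regular neighbourhoods of local extrema (which are $2$-disks) are all orientable, and orientations of pieces can be matched consistently along any tree of gluings; so if every regular neighbourhood of a critical component were orientable, $\MBand$ would be orientable. As it is not, at least one critical level-set component $\CrComp$ has a non-orientable regular neighbourhood. Such $\CrComp$ is unique: a non-orientable regular neighbourhood contains a one-sided simple closed curve; distinct critical components have disjoint regular neighbourhoods; and $\MBand$ contains no two disjoint one-sided simple closed curves, for if $c_{1},c_{2}$ were such, then cutting $\MBand$ along $c_{1}$ would yield an annulus containing $c_{2}$, in which $c_{2}$ — a simple closed curve in an annulus — is two-sided, contradicting its one-sidedness in $\MBand$. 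The same argument shows that any critical component having the property stated in the theorem must also carry a non-orientable regular neighbourhood, hence coincides with $\CrComp$, which gives the uniqueness in the asserted form.

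It then remains to describe $\overline{\MBand\setminus\NSubSurf}$, and here I use once more that $\Gf$ is a tree. Let $v$ be the vertex of $\Gf$ corresponding to $\CrComp$. The edges of $\Gf$ at $v$ biject with the components of $\Gf\setminus\{v\}$, so the components $\YYi{0},\dots,\YYi{n}$ of $\overline{\MBand\setminus\NSubSurf}$ are the $\rho$-preimages of the branches of $\Gf$ at $v$, each attached to $\NSubSurf$ along a single circle of $\partial\NSubSurf$, and $\partial\MBand$ — corresponding to the unique boundary leaf of $\Gf$ — lies in exactly one of them, say $\YYi{0}$. No $\YYi{k}$ contains a one-sided simple closed curve, since together with a one-sided curve in $\NSubSurf$ it would give two disjoint one-sided curves in $\MBand$; hence each $\YYi{k}$ is orientable, and an orientable compact subsurface of the M\"obius band is planar (a sphere with holes) by an Euler characteristic count. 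As $\partial\YYi{k}$ is a single circle for $k\geq1$, such $\YYi{k}$ is a $2$-disk, while $\partial\YYi{0}$ consists of that one circle together with $\partial\MBand$, so $\YYi{0}\cong S^{1}\times[0,1]$. Finally, for $\ndif\in\Stabilizer{\nfunc}$: the diffeomorphism $\ndif$ preserves every level set of $\nfunc$, permutes the critical level-set components, and carries regular neighbourhoods to regular neighbourhoods of the same diffeomorphism type, so by uniqueness $\ndif(\CrComp)=\CrComp$; since $\NSubSurf$ is a connected component of some $\nfunc^{-1}[c-\varepsilon,c+\varepsilon]$ containing $\CrComp$, also $\ndif(\NSubSurf)=\NSubSurf$; and $\ndif(\partial\MBand)=\partial\MBand$, so $\ndif$ permutes $\YYi{0},\dots,\YYi{n}$ fixing the one containing $\partial\MBand$, namely $\YYi{0}$, and therefore preserves $\bigcup_{k=1}^{n}\YYi{k}$.

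The conceptual core — the dichotomy ``non-separating $\Rightarrow$ one-sided $\Rightarrow$ contradiction with the product level-set neighbourhood'' — is short, and the two uniqueness arguments are elementary. The main obstacle I expect is the surface-theoretic bookkeeping: precisely matching the tree $\Gf$ with a decomposition of $\MBand$ into pieces (local triviality of $\nfunc$ over edges of $\Gf$, the exact structure of a regular neighbourhood of a critical component for $\nfunc\in\FSp{\MBand}$, and the correspondence between the components of $\overline{\MBand\setminus\NSubSurf}$ and the branches of $\Gf$ at $v$), together with the supporting facts that orientations match along trees of gluings and that orientable compact subsurfaces of a M\"obius band are planar.
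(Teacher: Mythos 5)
Your argument is correct, but it takes a route genuinely different from the paper's.  The paper also begins by showing $\Gamma(f)$ is a tree, but does so homologically (Corollary~\ref{cor:KRgraph_tree}), whereas you argue directly: a non-separating regular level curve would be one-sided yet have a product neighbourhood, a contradiction.  After that the two proofs diverge sharply.  The paper classifies two-sided simple closed curves on $\MBand$ into boundary-parallel type \bdEdge{} and null-homotopic type \nullEdge{}, transfers this to a labelling of the edges of the tree $\Gamma(f)$, and reformulates the theorem as the existence and uniqueness of a vertex with exactly one incident \bdEdge-edge; the key technical Lemma~\ref{lm:prof_ab_edges} (no vertex has three \bdEdge-edges; beyond a \nullEdge-edge everything is \nullEdge) then yields a chain-following existence argument and a subtree uniqueness argument.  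You instead localize the non-orientability: you decompose $\MBand$ along all critical components, observe that since the dual graph is a tree, orientability of all pieces would force $\MBand$ to be orientable, conclude that exactly one regular neighbourhood $\NSubSurf$ is non-orientable (uniqueness via the fact that $\MBand$ admits no two disjoint one-sided simple closed curves), and then read off that the complementary pieces lift to the orientable double cover and are therefore planar, with the boundary count giving one annulus and $n$ disks.  Your approach buys a cleaner conceptual picture — the non-orientability of $\MBand$ is concentrated at a single critical level — and avoids the curve-classification lemma, at the cost of the gluing/orientation-matching bookkeeping; the paper's approach is more combinatorial and produces as a by-product the explicit \bdEdge-path in $\Gamma(f)$ from $\partial\MBand$ to the distinguished vertex.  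One small imprecision in your write-up: the planarity of an orientable compact subsurface $S\subset\MBand$ is not really ``an Euler characteristic count'' — the clean argument is that every loop in $S$ preserves orientation of $S$ and hence of $\MBand$ (codimension zero), so $\pi_1 S$ maps into the index-two orientation-preserving subgroup of $\pi_1\MBand$, so $S$ lifts to the orientable double cover (an annulus) and is therefore planar.
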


Let $\CompSet=\{\YYi{1}, \dots, \YYi{n}\}$ be the family of all connected components of $\overline{\MBand\setminus\NSubSurf}$ being $2$-disks as in Theorem~\ref{th:unique_cr_level}.
Since $\mathop{\cup}\limits_{k=1}^{n}\YYi{k}$ is invariant with respect to $\Stabilizer{\nfunc,\partial\MBand}$, we have a natural action of $\Stabilizer{\nfunc,\partial\MBand}$ on $\CompSet$ by permutations.

Let us fix an orientation of each $\YYi{k}$, $k=1,\ldots,n$, and put $\PlCompSet = \CompSet \times\{\pm 1\}$.
Then the action of $\Stabilizer{\nfunc,\partial\MBand}$ on $\CompSet$ extends to an action on $\PlCompSet$ defined by the following rule:
if $\dif\in\Stabilizer{\nfunc,\partial\MBand}$ and $\YYi{k}\in\CompSet$, then $\dif(\YYi{k},+1)=(\dif(\YYi{k}),\delta)$ and $\dif(\YYi{k},-1)=(\dif(\YYi{k}),-\delta)$, where
\begin{equation*}
\delta=
\begin{cases}
  +1, & \text{if the restriction $\dif|_{\YYi{k}}: \YYi{k} \to \dif(\YYi{k})$ preserves orientation}, \\
  -1, & \text{otherwise}.
\end{cases}
\end{equation*}
Let $\KerSAct$ be the normal subgroup of $\Stabilizer{\nfunc,\partial\MBand}$ consisting of diffeomorphisms preserving each $\YYi{k}$ with its orientation.
In other words, $\KerSAct$ is the kernel of non-effectiveness of the action of $\Stabilizer{\nfunc,\partial\MBand}$ on $\PlCompSet$.
Hence the action of the quotient $\Stabilizer{\nfunc,\partial\MBand}/\KerSAct$ on $\PlCompSet$ is effective.
However the induced action of $\Stabilizer{\nfunc,\partial\MBand}/\KerSAct$ on $\CompSet$ is not in general effective.

\begin{theorem}\label{th:pi0S_struct}
The quotient group $\Stabilizer{\func,\partial\MBand}/\KerSAct$ \myemph{freely acts} on $\PlCompSet$, and we have an isomorphism
\begin{equation}\label{equ:Qf_struct}
    \pi_0\KerSAct \, \cong \, \bZ \times \prod\limits_{i=0}^{n} \ST{\YYi{i}}.
\end{equation}
In particular, if $\Stabilizer{\nfunc,\partial\MBand} = \KerSAct$, then we have an isomorphism:
\begin{equation}\label{equ:pi0SfdM_triv_act}
\ST{\MBand} \, \cong \, \bZ \times \prod\limits_{i=0}^{n} \ST{\YYi{i}}.
\end{equation}
\end{theorem}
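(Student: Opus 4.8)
The proof naturally splits into three steps: (i) the freeness of the action of $\Stabilizer{\func,\partial\MBand}/\KerSAct$ on $\PlCompSet$, (ii) the computation of $\pi_0\KerSAct$, and (iii) the deduction of~\eqref{equ:pi0SfdM_triv_act} from~\eqref{equ:Qf_struct}. The plan is to use the orientation double covering $p\colon\cov{\MBand}\to\MBand$ as the principal tool. Since $\MBand$ is a M\"obius band, $\cov{\MBand}$ is an annulus, the covering is trivial over $\partial\MBand$, and the deck transformation $\tau$ is a free orientation-reversing diffeomorphism interchanging the two boundary circles of $\cov{\MBand}$. Put $\tfunc=\func\circ p\in\FSp{\cov{\MBand}}$. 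Every $\dif\in\Stabilizer{\func,\partial\MBand}$ has exactly two lifts; precisely one of them, $\tdif$, fixes $\partial\cov{\MBand}$ pointwise, and it satisfies $\tdif\in\Stabilizer{\tfunc,\partial\cov{\MBand}}$, $\tdif\tau=\tau\tdif$, and --- being an orientation-preserving diffeomorphism of an annulus fixing the boundary pointwise --- it carries a genuine global orientation with which to compare its local behaviour everywhere. I first record the combinatorial data from Theorem~\ref{th:unique_cr_level}: $\NSubSurf$ is a M\"obius band with $n$ open disks removed (so $\cov{\NSubSurf}:=p^{-1}(\NSubSurf)$ is a sphere with $2n+2$ holes and deformation retracts onto the connected graph $p^{-1}(\CrComp)$), while the $n+1$ separating circles $\partial_i:=\NSubSurf\cap\YYi{i}$ are regular components of level-sets of $\func$, each lifting to a $\tau$-invariant pair of circles in $\cov{\MBand}$.

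For step (i), suppose $\dif\in\Stabilizer{\func,\partial\MBand}$ fixes some $(\YYi{k},\eps)\in\PlCompSet$; I must show $\dif\in\KerSAct$, i.e.\ that $\dif$ fixes every $\YYi{j}$ with its chosen orientation. Since $\dif$ fixes $\partial\MBand\subset\YYi{0}$ pointwise it fixes $\YYi{0}$ and restricts there to an orientation-preserving diffeomorphism of an annulus; together with $\dif(\YYi{k})=\YYi{k}$ and (after replacing $\eps$ by $+1$) orientation-preservation on $\YYi{k}$, this pins down the orientation type of $\tdif$ near two of the lifted separating circles. Now $\tdif$ permutes the components of the connected $\tau$-set $p^{-1}(\CrComp)$ and preserves the ``up/down'' co-orientations of the $2(n+1)$ lifted separating circles induced by $\tfunc$; because $\tdif$ is orientation-preserving on the \emph{orientable} surface $\cov{\NSubSurf}$ it cannot reverse its orientation type along any edge of $p^{-1}(\CrComp)$, and because $\cov{\NSubSurf}$ retracts onto this connected graph the two pinned circles force $\tdif$ to fix each lifted separating circle with its co-orientation; projecting down gives $\dif\in\KerSAct$, and this also shows that the quotient action is effective. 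The step I expect to cost the most work --- indeed the main obstacle of the whole proof --- is this last propagation: a priori the ribbon graph $p^{-1}(\CrComp)$ may possess symmetries permuting its ends, and one must check that combining $\tau$-equivariance, the $\tfunc$-co-orientations, and the hypothesis that $\func|_{\NSubSurf}$ has a \emph{single} critical level rules all of them out once one end is fixed; it is essentially here that the non-orientability of $\MBand$ enters.

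For step (ii), I would apply the cut-and-reglue technique of the cited works of the second author and D.~Feshchenko. Every $\dif\in\KerSAct$ preserves $\NSubSurf,\YYi{0},\dots,\YYi{n}$ individually, and the subgroup of $\KerSAct$ of diffeomorphisms that are the identity on fixed collars of $\partial_0,\dots,\partial_n$ is, by restriction, the direct product of the corresponding stabilizers of the pieces relative to their boundaries; passing from this subgroup to $\KerSAct$ introduces, for each separating circle, a ``twisting'' degree of freedom shared between the two adjacent pieces. Taking $\pi_0$ and using the isomorphisms of type~\eqref{equ:pi0stab_pi1orb} for the disk and annulus pieces, the contributions of $\YYi{0},\dots,\YYi{n}$ assemble into $\prod_{i=0}^{n}\ST{\YYi{i}}$, whereas the contribution of the M\"obius piece $\NSubSurf$ --- whose function $\func|_{\NSubSurf}$ has exactly one critical component, so that all Dehn twists along the regular circles near $\partial\NSubSurf$ are, by the non-orientability, forced to be powers of a single one --- reduces to $\bZ$; matching the shared twistings across the $\partial_i$ then yields precisely~\eqref{equ:Qf_struct}. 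Verifying that the M\"obius piece contributes exactly $\bZ$ (and not the larger group the analogous orientable piece would give) and that the twist-matching across the $\partial_i$ collapses nothing further is the computational core of this step.

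Finally, for step (iii): when $\Stabilizer{\func,\partial\MBand}=\KerSAct$, combine~\eqref{equ:Qf_struct} with the isomorphism $\ST{\MBand}\cong\pi_0\StabilizerIsotId{\func,\partial\MBand}$ furnished by~\eqref{equ:pi0stab_pi1orb}. Since $\StabilizerIsotId{\func,\partial\MBand}=\KerSAct\cap\DiffId(\MBand,\partial\MBand)$ is the union of those path components of $\KerSAct$ lying in the (contractible) identity component $\DiffId(\MBand,\partial\MBand)$, one gets $\pi_0\StabilizerIsotId{\func,\partial\MBand}=\ker\bigl(\pi_0\KerSAct\to\pi_0\Diff(\MBand,\partial\MBand)\bigr)$, and it remains to identify this kernel using that $\pi_0\Diff(\MBand,\partial\MBand)\cong\bZ$ is generated by the Dehn twist along a regular level circle of $\func$ lying in $\YYi{0}$; the resulting subgroup of $\pi_0\KerSAct\cong\bZ\times\prod_{i=0}^{n}\ST{\YYi{i}}$ is again abstractly isomorphic to $\bZ\times\prod_{i=0}^{n}\ST{\YYi{i}}$, which is~\eqref{equ:pi0SfdM_triv_act}. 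Thus step~(i) is the conceptual heart, step~(ii) a substantial but comparatively routine application of existing machinery once the $\bZ$-factor has been correctly located, and step~(iii) is bookkeeping.
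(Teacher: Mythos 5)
Your step (i) takes a genuinely different route from the paper, and the route has a gap that you yourself flag but do not close. The paper does \emph{not} argue on the orientable double cover at this stage: it glues a disk to $\partial\MBand$ to obtain a projective plane $\hat{B}$ with $\chi(\hat{B})=1$, views $\CrComp$ as the $1$-skeleton of a CW-structure $\Xi$ on $\hat{B}$ whose $2$-cells are the $\hat{Y}_i$, extends $\ndif$ to a $\Xi$-cellular homeomorphism $\hat{\ndif}$ isotopic to $\id_{\hat{B}}$ (so $L(\hat{\ndif})=1$), and invokes \cite[Cor.~5.6]{Maksymenko:MFAT:2010}: either the number of $\hat{\ndif}^+$-invariant cells equals $\chi(\hat{B})=1$ or all cells are $\hat{\ndif}^+$-invariant. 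Since $\hat{Y}_0$ is always $\hat{\ndif}^+$-invariant, preserving any $(\YYi{k},+)$ produces a second $+$-invariant cell and forces all of them. This Euler-characteristic constraint is what makes the M\"obius band special, and it has no counterpart in your sketch: on the orientable cover $\cov{\MBand}$ (an annulus, $\chi=0$) orientation-preservation of $\tdif$ does not by itself forbid nontrivial permutations of the disks through a symmetry of the ribbon graph $p^{-1}(\CrComp)$, which is exactly the ``main obstacle'' you name without resolving.

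In step (ii) you misplace the $\bZ$ factor. The paper first proves (Lemma~\ref{lm:Gf_SfdM_W}) that $\KerSAct=\Stabilizer{\nfunc,\partial\MBand;\NSubSurf}$, then deforms this (via Theorem~\ref{th:hom_equ} and Corollary~\ref{cor:SX_Sidf}) onto $\StabilizerNbh{\nfunc,\partial\MBand\cup\NSubSurf}$, whose elements are the identity near $\NSubSurf$; restricting to the pieces $\YYi{0},\dots,\YYi{n}$ then gives a direct product. The M\"obius piece $\NSubSurf$ contributes \emph{nothing}; the $\bZ$ comes from the annulus $\YYi{0}$, precisely because $\pi_0\Diff(\YYi{0},\partial\YYi{0})\cong\bZ$ and $\pi_0\Stabilizer{\nfunc|_{\YYi{0}},\partial\YYi{0}}\cong\bZ\times\pi_0\StabilizerIsotId{\nfunc|_{\YYi{0}},\partial\YYi{0}}$ (Lemma~\ref{lm:cyl:rel_StStIsotId}). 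Your sketch attributes the $\bZ$ to $\NSubSurf$ on the grounds of a collapse of Dehn twists by non-orientability; this is unsubstantiated and does not match the mechanism the paper actually uses. Finally, step (iii) rests on a false fact: $\pi_0\Diff(\MBand,\partial\MBand)$ is \emph{trivial} (the mapping class group of a M\"obius band rel boundary is trivial, not $\bZ$), so there is no Dehn-twist generator and no nontrivial kernel to identify. The correct deduction is immediate: when $\Stabilizer{\nfunc,\partial\MBand}=\KerSAct$, connectedness of $\Diff(\MBand,\partial\MBand)$ gives $\Stabilizer{\nfunc,\partial\MBand}=\StabilizerIsotId{\nfunc,\partial\MBand}$, so $\ST{\MBand}\cong\pi_0\StabilizerIsotId{\nfunc,\partial\MBand}=\pi_0\KerSAct$ by~\eqref{equ:pi0stab_pi1orb} and~\eqref{equ:Qf_struct}.
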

The case when the group $\Stabilizer{\func,\partial\MBand}/\KerSAct$ is non-trivial will be considered in another paper.

Due to~\ref{enum:PM:chi_neg} of Theorem~\ref{th:Stabf_reduction} a knowledge of $\ST{\MBand}$ will allow to compute $\ST{\Surf}$ for all non-orientable surfaces with $\chi(\Surf)<0$.
Together with results of~\cite{Maksymenko:DefFuncI:2014}, describing algebraic structure of $\ST{\Surf}$ for $\Surf$ being $2$-disk and annulus, this will give a complete description of the groups $\ST{\Surf}$ for all compact surfaces except for $2$-sphere, projective plane, and Klein bottle.
Also during the proof of Theorem~\ref{th:pi0S_struct} we will get more detailed information about $\ST{\MBand}$.

\begin{figure}[htbp]
\begin{tabular}{ccc}
\includegraphics[width=0.45\textwidth]{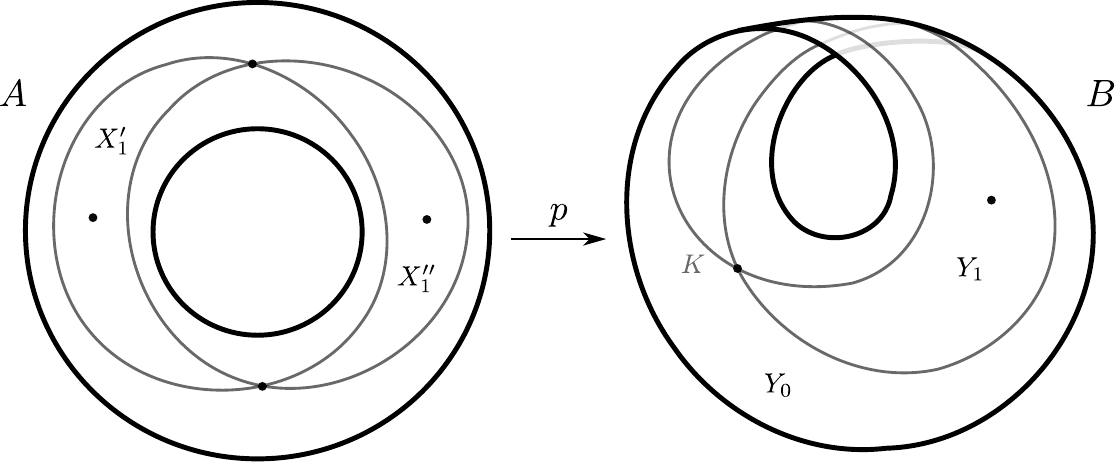} & \qquad\qquad & \includegraphics[width=0.45\textwidth]{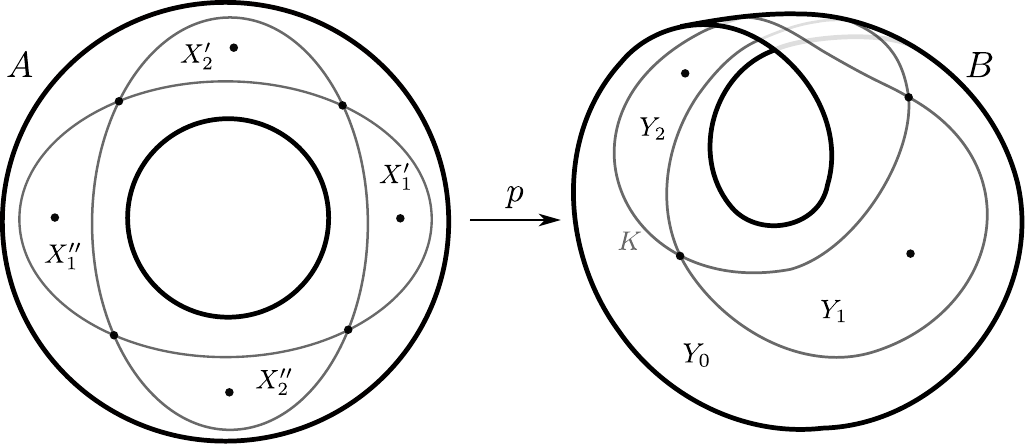} \\
(a)  && (b) \\
\includegraphics[width=0.45\textwidth]{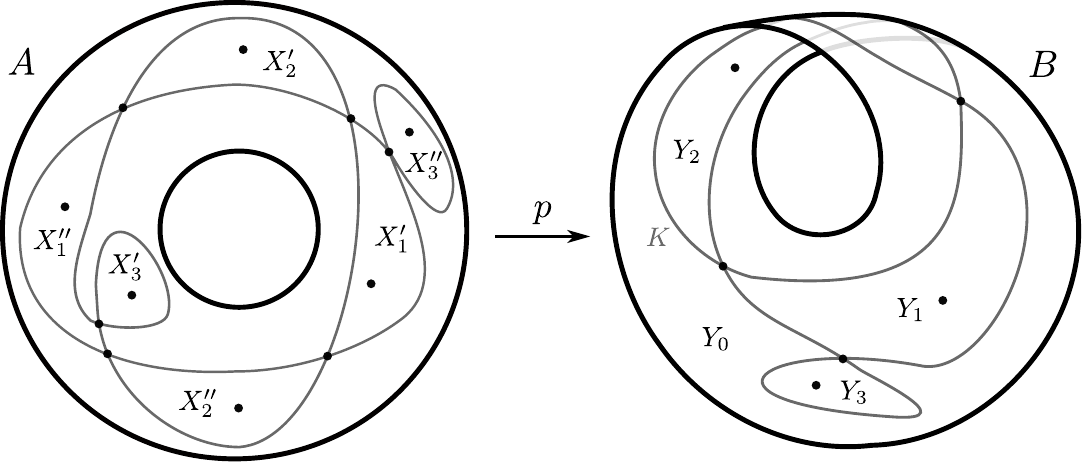} & \qquad\qquad & \includegraphics[width=0.45\textwidth]{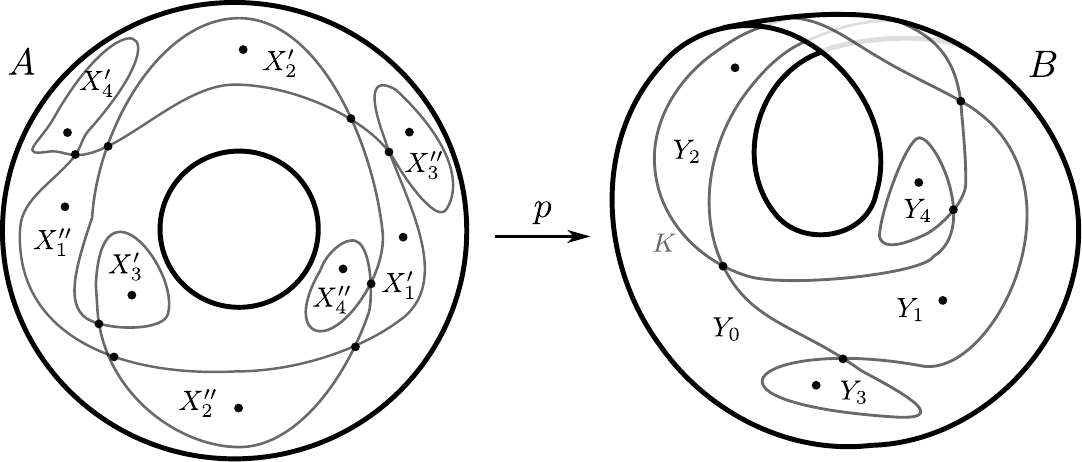} \\
(c)  && (d)
\end{tabular}
\caption{Critical level sets of certain functions on the M\"obius band}\label{fig:examples_func_mb}
\end{figure}

\subsection*{Examples}
Let $\Cyl = S^1\times[0,1]$ be an annulus, $\xi(\phi,t) = (\phi+\pi, 1-t)$ be the involution without fixed points and changing orientation of $\Cyl$, so $\MBand = \Cyl/\xi$ is a M\"obius band, and $p:\Cyl\to\MBand$ is an orientable double covering of $\MBand$.
Figure~\ref{fig:examples_func_mb} contains examples of critical components $\CrComp$ level-sets of Morse functions $\nfunc:\MBand\to\bR$ described by Theorem~\ref{th:unique_cr_level} and their preimages in $\Cyl$.
In order to simplify the illustration we denote by $\YYi{i}$ the connected components of $\MBand\setminus\CrComp$ (not of $\MBand\setminus\NSubSurf$ as in Theorem~\ref{th:unique_cr_level}), and by $\XXi{i}'$ and $\XXi{i}''$ connected components of $p^{-1}(\YYi{i})$ for $i\geq1$.

\begin{enumerate}[wide, label={Case (\alph*).}]
\item
There exists $\ndif\in\Stabilizer{\nfunc,\partial\MBand}$ such that $\ndif(\YYi{1})=\YYi{1}$ and it reverses orientation of $\YYi{i}$.
Then $\Stabilizer{\nfunc,\partial\MBand}/\KerSAct \cong \bZ_2$ and it generated by the isotopy class of $\ndif$.
Moreover, the action of $\Stabilizer{\nfunc,\partial\MBand}/\KerSAct$ on $\PlCompSet$ is transitive.

\item
In this case $\Stabilizer{\nfunc,\partial\MBand}/\KerSAct \cong \bZ_4$ is generated by the isotopy class of $\ndif\in\Stabilizer{\nfunc,\partial\MBand}$ such that $\ndif(\YYi{1})=\YYi{2}$, $\ndif(\YYi{2})=\YYi{1}$ and $\ndif^2$ reverses orientations of both $\YYi{1}$ and $\YYi{2}$.
Now the action of $\Stabilizer{\nfunc,\partial\MBand}/\KerSAct$ on $\PlCompSet$ is transitive as well.

\item
Evidently, each $\ndif\in\Stabilizer{\nfunc,\partial\MBand}$ preserves each $\YYi{i}$, $i=1,2,3$, with its orientation.
This means that $\Stabilizer{\nfunc,\partial\MBand}=\KerSAct$, so the group $\Stabilizer{\nfunc,\partial\MBand}/\KerSAct$ is trivial.

\item
Now $\Stabilizer{\nfunc,\partial\MBand}/\KerSAct \cong \bZ_2$ is generated by the isotopy class of $\ndif\in\Stabilizer{\nfunc,\partial\MBand}$ such that
\begin{align*}
\ndif(\YYi{1},+)&=(\YYi{1},-), &
\ndif(\YYi{2},+)&=(\YYi{2},-), &
\ndif(\YYi{3})&=(\YYi{4}), &
\ndif(\YYi{4})&=(\YYi{3}).
\end{align*}
\end{enumerate}

\subsection*{Structure of the paper}
In~\S\ref{sect:KRGraph} we recall the notion of the Kronrod-Reeb graph of a map $\func\in\FSp{\Surf}$, and in \S\ref{sect:th:1} prove of Theorem~\ref{th:unique_cr_level}.
\S\ref{sect:diff_non_or} contains certain results about relations of diffeomorphism groups of a non-orientable manifold and its double covering.
In \S\ref{sect:ham_vh} we recall the notion of a Hamiltonian like flow for a function on an orientable surface.
In~\S\ref{sect:FolStab_Grp} we introduce several subgroups of $\Stabilizer{\func}$ and prove Theorem~\ref{th:hom_equ} allowing to ``simplify'' diffeomorphisms from the stabilizer of $\func\in\FSp{\Surf}$.
These results extend~\cite[\S3 \& \S7]{Maksymenko:DefFuncI:2014} to non-orientable case.
\S\ref{sect:func_on_annulus} describes the relation between the groups $\Stabilizer{\func,\partial\Cyl}$ and $\StabilizerIsotId{\func,\partial\Cyl}$ for functions on the annulus $\Cyl=S^1\times[0,1]$, see Lemma~\ref{lm:cyl:rel_StStIsotId}.
In~\S\ref{sect:th:2} we prove Theorem~\ref{th:pi0S_struct}.

\section{Kronrod-Reeb graph}\label{sect:KRGraph}
Let $\Mman$ be a compact surface.
Given $\func\in\FSp{\Mman}$ consider the partition of $\Mman$ into connected components of level sets of $\func$.
Let also $\Gf$ be the set of elements of that partition and $p:\Mman\to\Gf$ be the quotient map.
Endow $\Gf$ with the corresponding quotient topology, so a subset $W\subset\Gf$ is open if and only if $p^{-1}(W)$ is open in $\Mman$.

Since $\func$ takes constant values on connected components of $\partial\Mman$ and has only finitely many critical points, it follows that $\Gf$ is a ``topological graph'', i.e. a one-dimensional CW-complex.
It is also called the \textit{Kronrod-Reeb graph} or simply the \textit{graph} of $\func$.

The following statement is well known for Morse maps, and can easily be extended to maps $\Mman\to\Pman$ with isolated critical points and taking constant values at each connected component of $\partial\Mman$.
\begin{lemma}{\rm cf.~\cite[Corollary~3.8]{Rezende_Ledesma_ManzoliNeto_Vago:TA:2018}.}
Let $\func\in\FSp{\Mman}$.
Then the quotient mapping $p:\Mman\to\Gf$ induces an epimorphism $p_{*}:H_1(\Mman,\partial\Mman,\bZ)\to H_1(\Gf,\bZ)$ between the corresponding \textit{integer} homology groups.
\end{lemma}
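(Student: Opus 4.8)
The plan is to use the long exact sequence of the pair $(\Mman,\partial\Mman)$ together with the observation that the quotient map $p\colon\Mman\to\Gf$ factors, up to homotopy, through a quotient that collapses each connected component of each level set to a point, and that this collapsing is "cellular" in a suitable sense. First I would reduce to the absolute case: since $\func$ takes a constant value on each boundary component and has no critical point there, each component of $\partial\Mman$ maps into a single (interior or terminal) point of $\Gf$, so $p$ already kills $H_1(\partial\Mman)$ and the induced map on the relative group $H_1(\Mman,\partial\Mman;\bZ)$ is well defined. It then suffices to show that $p_*$ is surjective after composing with the connecting-type identification; concretely, I would show that every $1$-cell of $\Gf$ is hit.

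The key geometric input is that $\Gf$ is a finite $1$-dimensional CW-complex whose edges correspond to one-parameter families of regular components of level sets, i.e. to $\func$-adopted annuli or strips in $\Mman$, and whose vertices correspond to critical components and to boundary components. For an open edge $e$ of $\Gf$, pick a regular value $c$ in the interior of the corresponding arc; the preimage $\func^{-1}(c)$ has a regular component $\Kman$ which is either a circle or an arc with endpoints on $\partial\Mman$. In either case $\Kman$, regarded as a relative $1$-cycle in $(\Mman,\partial\Mman)$, maps under $p$ onto (a chain homologous to) the generator of the cellular chain group associated with the edge $e$. Running over a spanning set of edges and using that $H_1(\Gf;\bZ)$ is generated by the cycles of the graph — equivalently, by any set of edges complementary to a spanning tree — I would conclude that the image of $p_*$ contains a generating set of $H_1(\Gf;\bZ)$, hence $p_*$ is onto.

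The main obstacle, and the place requiring care, is the bookkeeping of orientations and of what happens at the vertices: one must check that a single regular level component $\Kman$ maps to a class that is genuinely the edge generator and not, say, twice it or zero, and that the relative boundary conditions are compatible at vertices coming from critical points versus vertices coming from $\partial\Mman$. This is exactly where the hypothesis $\func\in\FSp{\Surf}$ (isolated critical points with the level-set structure of Figure~\ref{fig:isol_crit_pt}, constant on boundary components) is used: the local model at each critical point shows the incident edges fit together into a CW-structure on $\Gf$ with the expected attaching maps, so the cellular chain complex of $\Gf$ is the naive one and the computation of $p_*$ on a level component is the local intersection-number computation. For Morse maps this is classical (and the cited Corollary~3.8 of Rezende--Ledesma--Manzoli Neto--Vago covers it); for general $\func\in\FSp{\Surf}$ one replaces each critical point by its homogeneous-polynomial model, notes that the germ of the level-set partition is the same as for a Morse function with the same number of local branches, and transports the argument verbatim. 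No deeper input is needed; the statement is essentially a reformulation of the fact that collapsing connected level components of a nice function is a homology epimorphism in degree one.
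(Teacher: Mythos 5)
There is a genuine and fatal error in the geometric picture underlying your argument. By definition of the Kronrod--Reeb graph, the map $p:\Mman\to\Gf$ collapses each connected component of each level set of $\func$ to a single point. In particular, if $\Kman$ is a regular component of $\func^{-1}(c)$, then $p(\Kman)$ is one interior point of the edge $e$, not the edge itself, so $p_*[\Kman]=0$ in $H_1(\Gf,\bZ)$. Your key claim --- that ``$\Kman$, regarded as a relative $1$-cycle, maps under $p$ onto the generator of the cellular chain group associated with the edge $e$'' --- is therefore false, and the conclusion that the image of $p_*$ contains a generating set of $H_1(\Gf,\bZ)$ does not follow. (A smaller point: the case of ``an arc with endpoints on $\partial\Mman$'' does not occur here, since $\func$ takes a constant value on each boundary component, so regular level components are circles.)

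The objects that actually map onto edges of $\Gf$ are arcs in $\Mman$ \emph{transverse} to the foliation by level set components, i.e.\! arcs along which $\func$ is monotone between critical levels, not the level components themselves. This is precisely what the paper exploits: it constructs a continuous ``homotopical section'' $s:\Gf\to\Mman$, choosing continuously a point in each fiber of $p$, so that $p\circ s\simeq\id_{\Gf}$. Then $p_*\circ s_*=\id$ on $H_1(\Gf,\bZ)$, and surjectivity of $p_*$ is immediate. If you want to keep a cellular/edge-by-edge argument in the spirit of your proposal, you would need to replace each level component $\Kman$ by a transverse arc (or, for a cycle in $\Gf$, a transverse loop in $\Mman$ that $p$ carries onto that cycle); effectively this reconstructs the section $s$ one edge at a time, which is what the paper's one-line argument achieves globally.
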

\begin{proof}
One easily shows that there exists a continuous map $s:\Gf\to\Mman$ such that $p \circ s$ is homotopic to $\id_{\Gf}$, so $s$ is a ``\textit{homotopical section}'' of the map $p:\Mman\to\Gf$.
Hence we get the following commutative diagram
\[
\xymatrix{
& H_1(\Mman,\bZ) \ar[rd]^{p_{*}} \\
H_1(\Gf,\bZ) \ar[ru]^{s_{*}} \ar[rr]^{\id} && H_1(\Gf,\bZ)
}
\]
implying surjectivity of $p_*$.
\end{proof}
\begin{corollary}\label{cor:KRgraph_tree}
Let $\Mman$ be either a $2$-sphere or a projective plane with $k\geq0$ holes.
Then for each $\func\in\FSp{\Mman}$ the homomorphism $p_{*}$ is zero, whence the Kronrod-Reeb graph $\Gf$ of $\func$ is a tree.
\end{corollary}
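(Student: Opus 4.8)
The plan is to deduce the whole statement from the single fact that $H_1(\Gf;\bZ)=0$: for a connected graph this is exactly the condition of being a tree, and it forces the map $p_*$ of the corollary, whose target is $H_1(\Gf;\bZ)$, to vanish. The proof of the preceding lemma in fact exhibits a continuous map $s\colon\Gf\to\Mman$ with $p\circ s\simeq\id_{\Gf}$, so that $p_*\circ s_*=\id$ on $H_1(\Gf;\bZ)$; in particular the \emph{absolute} homomorphism $p_*\colon H_1(\Mman;\bZ)\to H_1(\Gf;\bZ)$ is surjective, and it therefore suffices to show that this absolute $p_*$ is zero. I would also record at the outset that $H_1(\Gf;\bZ)$, being the first homology of a $1$-dimensional CW-complex, is free abelian.

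First I would observe that $p_*$ annihilates every boundary class. Each connected component $\partial_i$ of $\partial\Mman$ is itself a connected component of the level set $\func^{-1}(\func(\partial_i))$ — indeed by~\ref{enum:classF:const_by_bd} the map $\func$ is constant on $\partial_i$ and has no critical point on $\partial\Mman$, so along a collar neighbourhood of $\partial_i$ this level set reduces to $\partial_i$ itself — hence $p$ collapses $\partial_i$ to a single vertex of $\Gf$ and $p_*[\partial_i]=0$. Consequently $p_*$ factors through the quotient $H_1(\Mman;\bZ)/\langle[\partial_1],\dots,[\partial_k]\rangle$. Next I would identify this quotient: capping the $k$ holes of $\Mman$ with $2$-disks produces the closed surface $\hMman$, which is $\Sphere$ in the first case and $\PrjPlane$ in the second, and a routine Mayer--Vietoris computation gives $H_1(\Mman;\bZ)/\langle[\partial_1],\dots,[\partial_k]\rangle\cong H_1(\hMman;\bZ)$. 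Since $H_1(\Sphere;\bZ)=0$ and $H_1(\PrjPlane;\bZ)\cong\bZ_2$, this quotient is a torsion group, so the induced map into the torsion-free group $H_1(\Gf;\bZ)$ is zero; thus $p_*=0$, and together with the surjectivity supplied by the lemma this yields $H_1(\Gf;\bZ)=0$. Hence $\Gf$ is a tree and the $p_*$ of the corollary vanishes.

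One can also carry out the identification of the quotient by hand: for $\Mman=\Sphere$ with $k$ holes the boundary classes $[\partial_i]$ already generate $H_1(\Mman;\bZ)$, while for $\Mman=\PrjPlane$ with $k$ holes they generate $H_1(\Mman;\bZ)$ together with the core class $c_0$ of an embedded M\"obius band, for which $2c_0$ lies in the subgroup $\langle[\partial_1],\dots,[\partial_k]\rangle$; then in the second case $2p_*(c_0)=0$ forces $p_*(c_0)=0$ by torsion-freeness of $H_1(\Gf;\bZ)$, so in both cases $p_*$ kills a set of generators of $H_1(\Mman;\bZ)$. The only point requiring a little care — and the sole ``geometric'' input — is this description of $H_1$ of a sphere, respectively projective plane, with $k$ holes and of the effect of capping; beyond that the argument is purely formal and handles the cases $\Sphere$/$\PrjPlane$ and $k=0$/$k\geq 1$ uniformly, so I do not anticipate any real obstacle.
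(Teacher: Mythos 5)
Your argument is correct, and it is essentially the same strategy as the paper's --- show that $p_*$ annihilates the boundary classes and then conclude that it factors through something small --- but your version is actually the more careful one. The paper's proof asserts that $i_*\colon H_1(\partial\Mman;\bZ)\to H_1(\Mman;\bZ)$ is \emph{surjective} for these surfaces and concludes $p_*=0$ directly from $p_*\circ i_*=0$. That surjectivity claim is correct for the sphere with $k$ holes, but it is \emph{false} for the projective plane with $k$ holes: already for $k=1$ (the M\"obius band) the boundary circle is twice the core, so the image of $i_*$ is $2\bZ\subsetneq\bZ\cong H_1(\Mman;\bZ)$, and in general the cokernel of $i_*$ is $\bZ_2\cong H_1(\PrjPlane;\bZ)$ rather than trivial. (And for $k=0$, $\Mman=\PrjPlane$, the statement that $i_*$ is surjective is vacuously wrong since $\partial\Mman=\varnothing$ while $H_1(\Mman;\bZ)\cong\bZ_2$.) Your proof patches exactly this gap by adding the observation --- which is the genuinely necessary extra ingredient --- that $H_1(\Gf;\bZ)$ is free abelian because $\Gf$ is a $1$-dimensional CW-complex; then the induced map from the torsion group $H_1(\Mman)/\langle[\partial_i]\rangle\cong H_1(\hMman)$ must vanish. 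So what your route buys is correctness in the projective-plane case at the small extra cost of invoking torsion-freeness of $H_1$ of a graph; apart from that, the skeleton (boundary classes die under $p$ because $\func$ is constant on boundary components, hence $p_*$ factors through $H_1(\hMman)$) is the same as in the paper.
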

\begin{proof}
Notice that for such surfaces the homomorphism $i_{*}:H_1(\partial\Mman,\bZ) \to H_1(\Mman,\bZ)$ induced by the inclusion $i:\partial\Mman \subset \Mman$ is surjective.
Since $\func$ takes constant values at boundary components of $\Mman$, it follows that $p_{*} \circ i_{*}=0$, whence $p_{*}$ is zero epimorphism.
Therefore $H_1(\Gf,\bZ)=0$ and so $\Gf$ is a tree.
\end{proof}

\section{Proof of Theorem~\ref{th:pi0S_struct}}\label{sect:th:1}
Let $\MBand$ be a M\"obius band, $\nfunc\in\FSp{\MBand}$, and $\Gf$ be the Kronrod-Reeb graph of $\nfunc$ being due to Corollary~\ref{cor:KRgraph_tree} a tree.
We have to find a connected component $\CrComp$ of some level set of $\nfunc$ satisfying statement of Theorem~\ref{th:pi0S_struct}.

Recall that up to an isotopy and changing of orientation there are exactly two classes of two-sided simple closed curves on M\"obius strip:
\begin{enumerate}[label={\rm(\Alph*)}]
\item[\bdEdge]
a curve isotopic to $\partial\MBand$ and dividing $\MBand$ into an annulus and a M{\"o}bius strip;
\item[\nullEdge]
a null-homotopic curve dividing $\MBand$ into a $2$-disk and a M{\"o}bius strip with a hole.
\end{enumerate}

In particular, each regular component $\gamma$ of each level-set of $\nfunc$ is a two-sided simple closed curve in $\MBand$, and so it has one of the above types \bdEdge\ or \nullEdge.
Notice that $p(\gamma)$ is an internal point of some open edge $e$ of $\Gf$.
If $\gamma'$ is another regular component of some level set such that $p(\gamma')\in e$, then $\gamma'$ is isotopic to $\gamma$, and therefore it has the same type \bdEdge\ or \nullEdge\ as $\gamma$.
Hence one can associate to each edge $e$ of $\Gf$ the type \bdEdge\ or \nullEdge\ being the type of $p^{-1}(w)$, where $w$ is any point in $e$.

Therefore Theorem~\ref{th:pi0S_struct} can be reformulated as follows: \textit{there exists a unique vertex $v\in\Gf$ having exactly one incident \bdEdge-edge}.
Then $\CrComp = p^{-1}(v)$.

For the proof we need the following lemma.
Denote by $v_{0} = p(\partial\MBand)$ the vertex of $\Gf$ corresponding to the boundary of $\MBand$.
\begin{lemma}\label{lm:prof_ab_edges}
\begin{enumerate}[leftmargin=4ex, label={\rm(\roman*)}, wide]
\item\label{enum:prof_ab_edges:bd}
A vertex $v\in\Gf$ can not have more than two incident \bdEdge-edges.

\item\label{enum:prof_ab_edges:null}
Let $e$ be an open \nullEdge-edge, $w\in e$ be a point, and $T_w$ be a connected component of $\Gf\setminus w$ that does not contain $v_{0}$.
Then every edge in $T_{w}$ is of type \nullEdge as well.
\end{enumerate}
\end{lemma}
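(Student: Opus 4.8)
The plan is to analyze how curves of type \bdEdge\ and \nullEdge\ sit inside the M\"obius band, using the key observation that a \bdEdge-curve is non-separating in the sense that each of its two complementary pieces ``remembers'' the non-orientability, while a \nullEdge-curve bounds a disk on one side. For part~\ref{enum:prof_ab_edges:bd}, suppose a vertex $v$ has three incident \bdEdge-edges; pick regular curves $\gamma_1,\gamma_2,\gamma_3$ representing points close to $v$ on these three edges. Removing a small regular neighborhood $\Uman$ of $\CrComp = p^{-1}(v)$ from $\MBand$ produces several components, and $\gamma_1,\gamma_2,\gamma_3$ lie in the boundaries of three of them, say $\Zman_1,\Zman_2,\Zman_3$. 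Each $\Zman_i$ is a compact surface one of whose boundary circles is $\gamma_i$, a curve isotopic to $\partial\MBand$ in $\MBand$; I would argue that cutting $\MBand$ along such a curve yields exactly one M\"obius-band piece and one annular piece, so at most one of the $\Zman_i$ can be ``the M\"obius side'' of its curve. But since $\gamma_i$ is of type \bdEdge, the component of $\MBand\setminus\gamma_i$ \emph{not} containing $\CrComp$ must be the annulus, and then the component containing $\CrComp$ is a M\"obius band with holes. Running this for all three $\gamma_i$ simultaneously: $\MBand$ cut along $\gamma_1\sqcup\gamma_2\sqcup\gamma_3$ would have to contain at least two non-orientable pieces (the ``outer'' sides of $\gamma_1$ and $\gamma_2$, which are disjoint and each contain a M\"obius band because removing the annular collar from $\MBand$ still leaves a M\"obius band), contradicting that $\MBand$ contains no two disjoint M\"obius bands. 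This last fact follows from $\chi(\MBand)=0$ and additivity of Euler characteristic, or from the classification of surfaces. That contradiction proves~\ref{enum:prof_ab_edges:bd}.

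For part~\ref{enum:prof_ab_edges:null}, the idea is a propagation argument down the tree $\Gf$. Let $e$ be an open \nullEdge-edge, $w\in e$, and let $T_w$ be the component of $\Gf\setminus w$ missing $v_0$. Let $\gamma = p^{-1}(w)$, a \nullEdge-curve; by definition it divides $\MBand$ into a $2$-disk $\Dman$ and a M\"obius band with a hole, and since $v_0\notin T_w$ while $v_0 = p(\partial\MBand)$, the subsurface $p^{-1}(\overline{T_w})$ is exactly the disk side $\Dman$ (the side not containing $\partial\MBand$). Thus every regular curve $\gamma'$ with $p(\gamma')\in T_w$ lies inside the disk $\Dman$, hence is null-homotopic in $\MBand$; but a null-homotopic two-sided simple closed curve on $\MBand$ is exactly one of type \nullEdge (a curve of type \bdEdge\ is \emph{not} null-homotopic, as it is isotopic to $\partial\MBand$, which is not contractible). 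Therefore every edge of $T_w$ is of type \nullEdge.

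Finally I would close the loop on the reformulation: existence and uniqueness of the vertex $v$ with a single incident \bdEdge-edge. Since $\Gf$ is a tree (Corollary~\ref{cor:KRgraph_tree}) and $\partial\MBand$ is a \bdEdge-type curve, there is at least one \bdEdge-edge incident to $v_0$; following \bdEdge-edges away from $v_0$, part~\ref{enum:prof_ab_edges:bd} bounds the local branching, and the tree has no cycles, so a maximal \bdEdge-path must terminate, giving existence of a vertex $v$ with exactly one incident \bdEdge-edge. For uniqueness, suppose $v'\neq v$ were another such vertex; the unique path in $\Gf$ between $v$ and $v'$ starts at $v$ along its unique \bdEdge-edge (otherwise it leaves along a \nullEdge-edge, but then part~\ref{enum:prof_ab_edges:null} forces the whole branch, including $v'$ and all its edges, to be \nullEdge-type, contradicting that $v'$ has a \bdEdge-edge), and symmetrically at $v'$; an Euler-characteristic/orientability count on the nested subsurfaces then yields two disjoint M\"obius pieces in $\MBand$, impossible. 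I expect the main obstacle to be the bookkeeping in part~\ref{enum:prof_ab_edges:bd}: carefully identifying which complementary components carry the non-orientability and invoking the ``no two disjoint M\"obius bands in $\MBand$'' fact in the right form; once that is set up cleanly, part~\ref{enum:prof_ab_edges:null} and the existence/uniqueness argument are comparatively routine.
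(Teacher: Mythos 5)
Your part~\ref{enum:prof_ab_edges:null} is essentially the paper's argument: $p^{-1}(T_w)$ is an open $2$-disk, so any regular curve over a point of $T_w$ bounds a disk there and hence is null-homotopic in $\MBand$, which is exactly type~\nullEdge. That is fine.

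Your part~\ref{enum:prof_ab_edges:bd} has a genuine logical error. You assert that for each $i$ ``the component of $\MBand\setminus\gamma_i$ \emph{not} containing $\CrComp$ must be the annulus.'' This is not justified, and it is in fact false for all but at most one of the $\gamma_i$: for a \bdEdge-curve $\gamma$, the annular component of $\MBand\setminus\gamma$ is the one containing $\partial\MBand$ (since $\gamma\cup\partial\MBand$ cobounds that annulus). Whether $\partial\MBand$ lies on the $\CrComp$-side of $\gamma_i$ is governed by whether the corresponding point $w_i\in e_i$ separates $v$ from $v_0$ in the tree $\Gf$, and since the $e_i$ are distinct edges at $v$, this happens for at most one index $i$. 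For the remaining $\geq 2$ indices the ``outer'' side of $\gamma_i$ is the M\"obius-band component, not the annulus. You then contradict your own claim three lines later by asserting that ``the outer sides of $\gamma_1$ and $\gamma_2$ \dots\ each contain a M\"obius band'' --- which is incompatible with their being annuli. So, as written, the argument does not hang together. The underlying idea is salvageable and would give a route genuinely different from the paper's: observe that for at least two indices the outer sides $Z_i$ are disjoint M\"obius-band components, and two disjoint M\"obius bands cannot embed in $\MBand$ (their cores are $1$-sided and any two $1$-sided simple closed curves on a M\"obius band intersect). But you need to reverse your annulus/M\"obius identification and explain why at most one $\gamma_i$ can separate $\CrComp$ from $\partial\MBand$. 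For comparison, the paper's proof of~\ref{enum:prof_ab_edges:bd} avoids M\"obius bands entirely: it uses that any two $\gamma_i,\gamma_j$ cobound an annulus $C_{ij}$, shows that with $m\geq 3$ some $C_{ij}$ must be nested inside another, and derives a contradiction with $\bigcup_i\gamma_i\subset\overline{Q_0}$; that route is more elementary in that it never invokes orientability considerations.

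The material at the end of your proposal (existence and uniqueness of the vertex $v$) is not part of Lemma~\ref{lm:prof_ab_edges}; it belongs to the deduction of Theorem~\ref{th:unique_cr_level} from the lemma, and your sketch of it agrees with the paper's argument, so no issue there.
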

\begin{proof}
\ref{enum:prof_ab_edges:bd}
Let $p^{-1}(v)$ be the critical component of some level set of $\nfunc$ corresponding to $v$,
$e_1,\ldots,e_m$ be all the \bdEdge-edges incident to $v$, $\gamma_i$, $i=1,\ldots,m$ be a connected component of a level-set of $\nfunc$ corresponding to some point of $e_i$, $Q = \MBand\setminus \cup_{i=1}^{m} \gamma_i$, and $Q_0, Q_1, \ldots, Q_k$ be all the connected components of $Q$.
One can assume that $p^{-1}(v) \subset Q_0$, whence $\cup_{i=1}^{m}\gamma_{i} \subset  \overline{Q_0}$ as well.

Now by assumption any two curves $\gamma_i$ and $\gamma_j$ are disjoint, not null homotopic, and isotopic each other.
Hence they bound an annulus $C_{ij}$ in $\MBand$ with $\partial C_{ij} = \gamma_i \cup \gamma_j$.

Assume now that $m\geq3$, so we have at least three annulus $C_{12}$, $C_{13}$ and $C_{23}$.
Then their union $Z = C_{12} \cup C_{13} \cup C_{23}$ is connected.

If the interiors of those annulus were mutually disjoint, then
\begin{align*}
  (C_{12}\setminus\gamma_1) \cap C_{23} &= \gamma_2 \not=\varnothing, &
  (C_{13}\setminus\gamma_1) \cap C_{23} &= \gamma_3 \not=\varnothing,
\end{align*}
whence
\[
  Z \setminus \gamma_1 = (C_{12}\setminus\gamma_1) \cup C_{23} \cup (C_{13}\setminus\gamma_1)
\]
would be connected which contradicts to the property that $\MBand\setminus\gamma_1$ is disconnected.

Hence, renumbering indexes if necessary, one can assume that $C_{12} \subset C_{13}$, and so $\gamma_2\subset \Int{C_{13}}$.
But $\gamma_2 \subset \overline{Q_0}$ as well, whence
\[
Q_0 \subset C_{13} \setminus (\gamma_1\cup\gamma_2\cup\gamma_3) = \Int{C_{12}} \cup \Int{C_{23}},
\]
and therefore $Q_0$ is contained either in $\Int{C_{12}}$ or in $\Int{C_{23}}$.
Assume for definiteness that $Q_0 \subset \Int{C_{12}}$.
Then $\overline{Q_0} \subset C_{12} \subset \MBand\setminus\gamma_3$ which contradict to the assumption that $\cup_{i=1}^{m}\gamma_{i} \subset \overline{Q_0}$.
Hence $m\leq 2$.

\medskip

\ref{enum:prof_ab_edges:null}
Notice that $p^{-1}(T_{w})$ is an open disk.
Hence if $e' \subset T_w$ is an open edge and $w'\in e'$ is a point, then the curve $p^{-1}(w)$ bounds in $p^{-1}(T_{w})$ a disk, and so $e'$ is of type \nullEdge.
\end{proof}

Now we can finish Theorem~\ref{th:pi0S_struct}.

First we show that such a vertex $v$ exists.
Let $v_{0} = p(\partial\MBand)$, and  $e_0=(v_0,v_1)$ be a unique edge of $\Gf$ incident to $v_0$, where $v_1$ is another vertex of $e_0$.
Evidently, $e_0$ is of type \bdEdge.
If there is no other \bdEdge-edges incident to $v_1$ except for $e_0$, then $v=v_1$ is the required vertex.

Otherwise, due to~\ref{enum:prof_ab_edges:bd} of Lemma~\ref{lm:prof_ab_edges} exists a unique \bdEdge-edge $e_1=(v_1,v_2)$ incident to $v_1$ and distinct from $e_0$.
Applying the same arguments to $e_1$ and so on we will stop (due to the finiteness of $\Gf$) at a unique path
\[
  \pi: e_0=(v_0,v_1), e_1=(v_1,v_2),\ldots,e_m=(v_m,v)
\]
of mutually distinct \bdEdge-edges such that its end vertex $v$ has a unique \bdEdge-edge.

Let us prove a uniqueness $v$.
Let $v'$ be a vertex of $\Gf$ distinct from $v$ and $k$ be the number of \bdEdge-edges incident to $v'$.
We should prove that $k=0$ or $2$.
If $v'\in \pi$, then by the construction $k=2$.

We claim that $k=0$ for all other vertices.
Indeed, let $T$ be the connected component of the complement $\Gf \setminus \pi$ containing $v'$.
Then $T$ is a subtree having with the path $\pi$ a unique common vertex, say $v_i$.
Let $e'=(v_i,v'_i)$ be a unique edge belonging to $T$.
Then by the construction $e'$ is of type \nullEdge, whence by~\ref{enum:prof_ab_edges:null} of Lemma~\ref{lm:prof_ab_edges} all other edges of $T$ are also of type \nullEdge.
In particular, so are all edges incident to $v'$, whence $k=0$.
\qed

\section{Diffeomorphisms of non-orientable manifolds}\label{sect:diff_non_or}
Let $\NSurf$ be a smooth non-orientable connected manifold of dimension $\dimM$, $p\colon \OSurf\to\NSurf$ be the oriented double covering of $\NSurf$, and $\xi\colon \OSurf\to \OSurf$ be the corresponding $\Cinfty$ diffeomorphism without fixed points generating the group $\bZ_2$ of covering transformations, that is $\xi^2=\id_{\OSurf}$ and $p\circ\xi= p$.

A diffeomorphism $\tdif\in\Diff(\OSurf)$ will be called \myemph{symmetric} if it commutes with $\xi$, that is $\tdif\circ\xi=\xi\circ\tdif$.
Denote by $\tDiff(\OSurf,\OSubMan)$ the group of all symmetric diffeomorphisms of $\OSurf$ fixed on a closed subset $\OSubMan\subset\OSurf$ and by $\tDiffId(\OSurf,\OSubMan)$ the identity path component of $\tDiff(\OSurf,\OSubMan)$.
If $\OSubMan$ is empty, we will just omit it from notation.

The aim of this section is to find precise relations between the groups $\Diff(\NSurf)$ and $\tDiff(\OSurf)$, see Lemma~\ref{lm:DM_tDtM_relations} below.

\begin{lemma}\label{lm:cov:path_comp_of_preimage}
Let $\NSubMan \subset \Nman$ be a path connected subset.
Then its preimage $\OSubMan=p^{-1}(\NSubMan)$ is either path connected or consists of two disjoint path components which are interchanged by $\xi$.
\end{lemma}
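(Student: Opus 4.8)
The plan is to exploit the elementary structure of double covers. Since $p\colon\OSurf\to\NSurf$ is a two-sheeted covering and $\xi$ is the nontrivial deck transformation, for every point $x\in\NSubMan$ the fibre $p^{-1}(x)$ consists of exactly two points interchanged by $\xi$. First I would set $\OSubMan = p^{-1}(\NSubMan)$ and observe that the restriction $p|_{\OSubMan}\colon\OSubMan\to\NSubMan$ is again a two-sheeted covering map (coverings restrict to coverings over subspaces, with $\xi|_{\OSubMan}$ the deck transformation). So the whole question reduces to: a connected two-sheeted cover of a path connected space is either connected or the disjoint union of two copies of the base interchanged by the deck transformation.

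Next I would prove this reduced statement. Suppose $\OSubMan$ is not path connected, and let $\OSubMan_0$ be a path component. I claim $\xi(\OSubMan_0)$ is a different path component with $\OSubMan_0\cup\xi(\OSubMan_0)=\OSubMan$. The key step is to show $p|_{\OSubMan_0}\colon\OSubMan_0\to\NSubMan$ is a homeomorphism (equivalently, a bijection, since it is an open continuous map once we know each point of $\NSubMan$ has a neighbourhood evenly covered by $p$ and meeting $\OSubMan_0$ in a single sheet). Indeed, $p|_{\OSubMan_0}$ is surjective: given $x_0\in p(\OSubMan_0)$ and any $x\in\NSubMan$, choose a path $\sigma$ in $\NSubMan$ from $x_0$ to $x$; it lifts to a path in $\OSubMan$ starting in $\OSubMan_0$, which must stay in the path component $\OSubMan_0$, so $x\in p(\OSubMan_0)$. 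Injectivity: if $p$ identified two points $a\neq b$ of $\OSubMan_0$, then $b=\xi(a)$; joining $a$ to $b$ by a path $\tau$ in $\OSubMan_0$ and projecting gives a loop $p\circ\tau$ in $\NSubMan$ whose lift starting at $a$ ends at $\xi(a)$, so $p\circ\tau$ represents the nontrivial element of the index-two subgroup story — more directly, concatenating $\tau$ with $\xi\circ\tau$ (which runs from $\xi(a)=b$ to $\xi(b)=a$ and lies in $\xi(\OSubMan_0)$) would build a path from a point of $\OSubMan_0$ to a point of $\xi(\OSubMan_0)$ through $a$, forcing $\OSubMan_0=\xi(\OSubMan_0)$; but then $\OSubMan_0$ is $\xi$-invariant, hence $p|_{\OSubMan_0}$ is still surjective and $\OSubMan_0=\OSubMan$, contradicting that $\OSubMan$ is disconnected. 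So $p|_{\OSubMan_0}$ is a continuous open bijection onto $\NSubMan$, hence a homeomorphism; the same holds for $\xi(\OSubMan_0)$. Since the two fibre points over each $x\in\NSubMan$ are $\{a,\xi(a)\}$ with $a\in\OSubMan_0$, we get $\OSubMan = \OSubMan_0\sqcup\xi(\OSubMan_0)$, and these are the two path components, interchanged by $\xi$.

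The main obstacle, such as it is, is purely bookkeeping: making sure the path-lifting argument is applied to paths that are known to stay inside a single path component of $\OSubMan$, and handling the dichotomy cleanly (the case $\OSubMan_0=\xi(\OSubMan_0)$ must be ruled out precisely when $\OSubMan$ is assumed disconnected). No hard analysis or topology is involved; everything follows from the unique path-lifting property of covering maps and the fact that $\xi$ is a fixed-point-free involution realizing the deck group $\bZ_2$.
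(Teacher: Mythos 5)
Your proof is correct and uses the same key tool as the paper's proof, namely the path lifting property of the covering $p$ to show that every path component of $\OSubMan$ surjects onto $\NSubMan$. The paper then finishes by a pure counting argument: since $p^{-1}(\nx)$ has exactly two points and meets every path component, there are at most two components, and if there are two then the fibre $\{\ox,\xi(\ox)\}$ is split between them, so $\xi$ swaps them. You take a slightly longer route by additionally proving that, when $\OSubMan$ is disconnected, $p|_{\OSubMan_0}$ is a \emph{homeomorphism} onto $\NSubMan$ (establishing injectivity via the $\xi\circ\tau$ argument before concluding). That stronger statement is true but not needed for the lemma; the paper's fibre-counting step reaches the conclusion with less work. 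Both arguments are essentially the same in substance, so I would regard your proposal as a correct, somewhat more elaborate rendition of the paper's proof rather than a genuinely different route.
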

\begin{proof}
One easily deduces from path lifting axiom for the covering map $p:\OSurf\to\NSurf$, that $p(\OSubMan)=\NSubMan$ for every path component $\OSubMan'$ of $\OSubMan$.
Hence for every point $\nx\in\NSubMan$ its inverse image $p^{-1}(\nx)$ intersects each path component of $\OSubMan$.
But $p^{-1}(\nx)$ consists of two points, say $\ox$ and $\xi(\ox)$, whence $\OSubMan$ must consist of either one or two path components.
Moreover, if $\OSubMan$ has two path components $\OSubMan'$ and $\OSubMan''$ such that $\ox\in\OSubMan'$ and $\xi(\ox)\in\OSubMan''$, then $\xi$ interchanges $\ox$ and $\xi(\ox)$ as well as path components $\OSubMan'$ and $\OSubMan''$.
\end{proof}

\begin{lemma}\label{lm:DM_tDtM_relations}
Each $\odif\in \tDiff(\OSurf)$ yields a diffeomorphism $\ndif\in\Diff(\NSurf)$ such that $p\circ\odif = \ndif\circ p$, and the correspondence $\odif\mapsto\ndif$ is a continuous epimorphism $\rho: \tDiff(\OSurf)\to \Diff(\NSurf)$ with kernel $\ker(\rho) = \{\id_{\OSurf}, \xi\} \cong \bZ_2$.
Moreover, $\rho$ yields an isomorphism of $\tDiffPl(\OSurf)$ onto $\Diff(\NSurf)$, so we get the following commutative diagram whose rows are exact and all vertical arrows are isomorphisms:
\begin{equation}\label{equ:tDtM_z2_tDpltM}
\begin{aligned}
\xymatrix{
\ \bZ_2 \ \ar@{^{(}->}[rr]^-{i \,\mapsto\, (i,\id_{\OSurf})} \ar[d]^-{i \,\mapsto\, \xi^i}_-{\cong}  &&
\ \bZ_2\times \tDiffPl(\OSurf) \ \ar@{->>}[rr]^-{(i,\odif) \,\mapsto\, \odif}
  \ar[d]^-{(i,\odif) \,\mapsto\, \xi^i \circ \odif}_-{\cong}  &&
\ \tDiffPl(\OSurf) \ \ar@/^3pt/[d]^-{\rho} \\
\ \langle \xi \rangle \ \ar@{^{(}->}[rr] &&
\ \tDiff(\OSurf) \ \ar@{->>}[rr]^-{\rho} &&
\ \Diff(\NSurf) \ar@/^3pt/[u]^-{s} \
}
\end{aligned}
\end{equation}
where $s$ is the inverse to $\rho$.

Moreover, $s$ also induces the isomorphisms described below.

\begin{enumerate}[leftmargin=*, label={\rm(\arabic*)}]
\item\label{enum:s_iso:DMX}
For every subset $\NSubMan \subset \NSurf$ we have an isomorphism
\begin{equation}\label{equ:s_DidMX_tDidtMtX}
s\colon \DiffId(\NSurf,\NSubMan) \cong \tDiffId(\OSurf, p^{-1}(\NSubMan)).
\end{equation}

\item\label{enum:s_iso:DidMX}
Suppose $\NSubMan \subset \NSurf$ is a subset such that for every path component $\NSubMan'$ of $\NSubMan$ and $\odif\in\tDiffPl(\OSurf)$ the restrictions
\[
\odif|_{p^{-1}(\NSubMan')},\, \xi|_{p^{-1}(\NSubMan')}\colon \, p^{-1}(\NSubMan') \to \OSurf
\]
are \myemph{distinct maps}, that is they take distinct values at some point.
Then we also have an isomorphism
\begin{equation}\label{equ:s_DMX_tDtMtX}
  s\colon \Diff(\NSurf,\NSubMan) \cong \tDiff(\OSurf, p^{-1}(\NSubMan)).
\end{equation}
For instance, this hold if $\NSubMan$ is an $\dimM$-dimensional submanifold or a \myemph{two-sided} $(\dimM-1)$-dimensional submanifold, but does not hold when $\NSubMan$ is a finite subset.
\end{enumerate}
\end{lemma}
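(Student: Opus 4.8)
The whole argument rests on one remark: a diffeomorphism $\odif$ of $\OSurf$ commutes with $\xi$ precisely when it descends to the quotient $\NSurf=\OSurf/\xi$.

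\textbf{Construction of $\rho$, $s$, and the diagram.} Given $\odif\in\tDiff(\OSurf)$, the formula $\ndif(p(\ox)):=p(\odif(\ox))$ is well defined (the only ambiguity in $p(\ox)$ is replacing $\ox$ by $\xi(\ox)$, and $\odif(\xi(\ox))=\xi(\odif(\ox))$ represents the same point of $\NSurf$), is smooth because $p$ is a local diffeomorphism, and is a diffeomorphism because $\odif^{-1}$ descends to $\ndif^{-1}$; moreover $p\circ\odif=\ndif\circ p$. Then $\rho(\odif)=\ndif$ is a homomorphism, continuous for the Whitney topologies since locally $\ndif=p\circ\odif\circ\sigma$ for local sections $\sigma$ of $p$. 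Its kernel is $\{\odif: p\circ\odif=p\}$, i.e.\ the group of deck transformations $\{\id_\OSurf,\xi\}\cong\bZ_2$, both of which are symmetric. For surjectivity I would use the tautological model of the orientation double covering, in which a point of $\OSurf$ over $\nx\in\NSurf$ is a local orientation $o_\nx$ of $T_\nx\NSurf$ and $\xi(\nx,o_\nx)=(\nx,-o_\nx)$: then $s(\ndif)(\nx,o_\nx):=(\ndif(\nx),(d\ndif_\nx)_\ast o_\nx)$ is a symmetric diffeomorphism of $\OSurf$ covering $\ndif$, it preserves the canonical orientation of $\OSurf$, is multiplicative in $\ndif$ by uniqueness of orientation-preserving lifts, and depends continuously on $\ndif$; thus $\rho\circ s=\id$. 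Since $\xi$ reverses the canonical orientation (otherwise that orientation would descend and $\NSurf$ would be orientable) and $\xi$ is central in $\tDiff(\OSurf)$, we get $\tDiff(\OSurf)=\tDiffPl(\OSurf)\sqcup\xi\cdot\tDiffPl(\OSurf)$, the restriction $\rho|_{\tDiffPl(\OSurf)}$ is a topological-group isomorphism with inverse $s$ (it is a $2$-to-$1$ covering map restricted to one sheet), and~\eqref{equ:tDtM_z2_tDpltM} follows by inspection.

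\textbf{Part (1).} As $s\colon\Diff(\NSurf)\to\tDiffPl(\OSurf)$ is an isomorphism of topological groups, it carries identity path components onto identity path components; since $\tDiffPl(\OSurf)$ is open and closed in $\tDiff(\OSurf)$ and contains $\id$, its identity component is $\tDiffId(\OSurf)$, which settles $\NSubMan=\varnothing$. For arbitrary $\NSubMan$: given $\ndif\in\DiffId(\NSurf,\NSubMan)$, pick a path $\ndif_t$ from $\id$ to $\ndif$ in $\Diff(\NSurf,\NSubMan)$; then $\odif_t:=s(\ndif_t)$ is a path in $\tDiffPl(\OSurf)$, and for $\ox\in p^{-1}(\NSubMan)$ the identity $p(\odif_t(\ox))=\ndif_t(p(\ox))=p(\ox)$ forces $\odif_t(\ox)\in\{\ox,\xi(\ox)\}$; continuity of $t\mapsto\odif_t(\ox)$ together with $\ox\neq\xi(\ox)$ and $\odif_0(\ox)=\ox$ gives $\odif_t(\ox)=\ox$ for all $t$. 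Hence $\{\odif_t\}\subset\tDiff(\OSurf,p^{-1}(\NSubMan))$ and $s(\ndif)\in\tDiffId(\OSurf,p^{-1}(\NSubMan))$. Conversely, a path from $\id$ in $\tDiff(\OSurf,p^{-1}(\NSubMan))$ stays in $\tDiffPl(\OSurf)$, and evaluating $p\circ\odif_t=\rho(\odif_t)\circ p$ on $p^{-1}(\NSubMan)$ shows that $\rho(\odif_t)$ fixes $\NSubMan$ pointwise. So $s$ restricts to a bijection, hence to an isomorphism of topological groups, which is~\eqref{equ:s_DidMX_tDidtMtX}.

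\textbf{Part (2) and main obstacle.} Assume the ``distinct maps'' hypothesis. First, $\tDiff(\OSurf,p^{-1}(\NSubMan))\subseteq\tDiffPl(\OSurf)$: an orientation-reversing $\odif$ in it would yield $\odif':=\xi\circ\odif\in\tDiffPl(\OSurf)$ with $\odif'|_{p^{-1}(\NSubMan)}=\xi|_{p^{-1}(\NSubMan)}$ (since $\odif=\id$ on $p^{-1}(\NSubMan)$), contradicting the hypothesis on any path component. Second, for $\ndif\in\Diff(\NSurf,\NSubMan)$ the lift $\odif:=s(\ndif)$ preserves $p^{-1}(\NSubMan')$ for each path component $\NSubMan'$ and satisfies $\odif(\ox)\in\{\ox,\xi(\ox)\}$ there; using Lemma~\ref{lm:cov:path_comp_of_preimage} and connectedness, on $p^{-1}(\NSubMan')$ one has either $\odif=\id$ or $\odif=\xi$, and the hypothesis excludes the second, so $\odif$ fixes $p^{-1}(\NSubMan)$ pointwise. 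Thus $s$ maps $\Diff(\NSurf,\NSubMan)$ into $\tDiff(\OSurf,p^{-1}(\NSubMan))$, while $\rho$ maps $\tDiff(\OSurf,p^{-1}(\NSubMan))\subseteq\tDiffPl(\OSurf)$ back into $\Diff(\NSurf,\NSubMan)$ by the same computation on $p^{-1}(\NSubMan)$, yielding~\eqref{equ:s_DMX_tDtMtX}. The sample cases are checked directly: if $\NSubMan$ is $\dimM$-dimensional, each path component has nonempty interior and $\odif'$ cannot agree with $\xi$ on a nonempty open set, since $\odif'$ preserves and $\xi$ reverses orientation; if $\NSubMan$ is a two-sided $(\dimM-1)$-dimensional submanifold, a diffeomorphism of $\NSurf$ fixing a component $\NSubMan'$ pointwise cannot interchange its two local sides (in the situations we apply this the complementary pieces are non-homeomorphic), so the orientation-preserving lift does not restrict to $\xi$ on $p^{-1}(\NSubMan')$; and for finite $\NSubMan$ the hypothesis genuinely fails since $\xi|_{p^{-1}(\NSubMan)}$ extends to an orientation-preserving symmetric diffeomorphism. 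The conceptual core of the proof is the construction of the canonical orientation-preserving symmetric lift $s$, and the one genuinely delicate point is that the whole force of the hypothesis in Part~(2) is to forbid the alternative $\odif|_{p^{-1}(\NSubMan')}=\xi|_{p^{-1}(\NSubMan')}$, whose exclusion for codimension-one data requires the two sides to be distinguishable.
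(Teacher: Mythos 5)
Your proof is correct and follows essentially the same route as the paper, but it supplies two things the paper leaves implicit. First, the paper's construction of $s$ simply takes the orientation-preserving element of $\rho^{-1}(\ndif)$, tacitly assuming surjectivity of $\rho$; your tautological model $(\nx,o_\nx)\mapsto(\ndif(\nx),(d\ndif_\nx)_*o_\nx)$ is an explicit section that makes surjectivity manifest and is a genuine clarification. Second, for part~(2) the paper only checks $s(\Diff(\NSurf,\NSubMan))\subset\tDiff(\OSurf,\OSubMan)$ and takes for granted that $s$ and $\rho$ are mutually inverse on these subgroups; your explicit observation that the hypothesis forces $\tDiff(\OSurf,p^{-1}(\NSubMan))\subset\tDiffPl(\OSurf)$ closes that small gap cleanly.

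On the sample cases in part~(2): your parenthetical ``(in the situations we apply this the complementary pieces are non-homeomorphic)'' is well-placed, because the paper's unqualified claim that every two-sided $(\dimM-1)$-submanifold satisfies the hypothesis is actually false. Take $\NSurf$ a Klein bottle with double cover $T^2=\bR^2/\bZ^2$ and $\xi(x,y)=(x+\tfrac12,-y)$; the image $\gamma$ of $\{y=\tfrac14\}\cup\{y=\tfrac34\}$ is a two-sided circle separating $K$ into two M\"obius bands, yet the translation $\odif(x,y)=(x+\tfrac12,y+\tfrac12)$ lies in $\tDiffPl(T^2)$ and agrees with $\xi$ on $p^{-1}(\gamma)$. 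So the hypothesis of~\eqref{equ:s_DMX_tDtMtX} fails and in fact $s(\rho(\odif))=\odif$ does not fix $p^{-1}(\gamma)$. What actually saves the paper's applications (level sets of $\nfunc\in\FSp{\NSurf}$, or boundaries of adopted subsurfaces) is that either the complementary sides are non-homeomorphic, or more to the point the diffeomorphism preserves $\nfunc$ and hence cannot interchange the $\nfunc<c$ and $\nfunc>c$ sides of a regular level set, forcing the differential to preserve orientation along $\OSubMan$. You should say this explicitly rather than phrasing the general ``cannot interchange its two local sides'' as if it held for arbitrary two-sided hypersurfaces. Apart from this caveat -- which is a defect of the lemma's statement, not of your argument -- the proposal is sound.
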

\begin{proof}
Let $\odif\in\tDiff(\OSurf)$ and $\ndif = \rho(\odif) \in\Diff(\NSurf)$.
Then $\rho^{-1}(\ndif)$ consists of two diffeomorphisms $\odif$ and $\xi\circ\odif$ one of which preserves orientation, and another one reverses it.
Denote by $s(\ndif)$ those one which preserves orientation.
Then the correspondence $\ndif\mapsto s(\ndif)$ is a continuous homomorphism $s:\Diff(\NSurf) \to \tDiffPl(\OSurf)$ satisfying $\rho\circ s = \id_{\Diff(\NSurf)}$.
Since by definition $\xi$ commutes with all $\tDiff(\OSurf)$ and generates the kernel of $\rho$, we get the desired diagram~\eqref{equ:tDtM_z2_tDpltM}.

\ref{enum:s_iso:DMX}
First notice that $\tDiffId(\OSurf)$ is also the identity path component of $\tDiffPl(\OSurf)$.
Hence $\rho$ induces an isomorphism of $\tDiffId(\OSurf)$ onto the path component $\DiffId(\NSurf)$ of $\Diff(\NSurf)$, and so we get the inverse isomorphism $s\colon \DiffId(\NSurf) \cong \tDiffId(\OSurf)$ coinciding with~\eqref{equ:s_DidMX_tDidtMtX} for the case $\NSubMan=\varnothing$.

Suppose now that $\NSubMan \subset \NSurf$ is a non-empty subset and let $\OSubMan = p^{-1}(\NSubMan)$.
Evidently, $\rho\bigl( \tDiff(\OSurf,\OSubMan) \bigr) \subset \Diff(\NSurf,\NSubMan)$, that is if $\ndif\in\tDiff(\OSurf)$ is fixed on $\OSubMan$, then $\rho(\odif)$ is fixed on $\NSubMan$.
Hence
\[ \rho\bigl( \tDiffId(\OSurf,\OSubMan) \bigr) \subset \DiffId(\NSurf,\NSubMan). \]

Conversely, let $\ndif\in \DiffId(\NSurf,\NSubMan)$, so there is an isotopy $H:\NSurf\times [0,1]\to\NSurf$ such that $H_0=\id_{\NSurf}$, $H_1=\ndif$, and each $H_t$ is fixed on $\NSubMan$.
Since $p$ is a covering map, $H$ lifts to a unique isotopy $\cov{H}:\OSurf\times[0,1]\to\OSurf$ such that $\cov{H}_0=\id_{\OSurf}$ and $\rho(\cov{H}_t)=H_t$.
In particular, $\cov{H}_t\in\tDiffId(\OSurf) \subset \tDiffPl(\OSurf)$, and so $\cov{H}_t = s(H_t)$.

It remains to show that each $\cov{H}_t$ is fixed on $\OSubMan$, which will imply that
\[
   s(\DiffId(\NSurf,\NSubMan)) \subset \tDiffId(\OSurf,\OSubMan)
\]
and give the isomorphism~\ref{enum:s_iso:DMX}.
Let $\ox\in\OSubMan$ and $\nx=p(\ox) \in \NSubMan$.
Since $H(x\times [0,1]) = \nx$, it follows that $\cov{H}(\ox \times [0,1]) \subset p^{-1}(\nx) = \{\ox, \xi(\ox)\}$.
But the latter set is discrete and $\cov{H}(\ox,0) = \ox$, whence $\cov{H}(\ox \times [0,1])=\ox$ as well.
Thus $\cov{H}_t$ is fixed on $\OSubMan$.

\medskip

\ref{enum:s_iso:DidMX}
Let $\OSubMan = p^{-1}(\NSubMan)$, so the restriction $p:\OSubMan \to \NSubMan$ is a double covering map.
As noted above $\rho\bigl( \tDiff(\OSurf,\OSubMan) \bigr) \subset \Diff(\NSurf,\NSubMan)$, and so we should only check that
\begin{equation}\label{equ:s_mX_tDtMtX}
s\bigl(\Diff(\NSurf,\NSubMan)\bigr) \subset \tDiff(\OSurf,\OSubMan).
\end{equation}

a) Suppose $\NSubMan$ is path connected and let $\ndif\in \Diff(\NSurf,\NSubMan)$ and $\odif = s(\ndif) \in \tDiffPl(\OSurf)$.
To prove~\eqref{equ:s_mX_tDtMtX} we should check that $\odif$ is fixed on $\OSubMan$.

Since $\ndif$ is fixed on $\NSubMan$, it follows that $\odif(\ox) \in \{ \ox, \xi(\ox)\}$ for all $\ox\in\OSubMan$.
By assumption $\ndif|_{\OSubMan} \not= \xi|_{\OSubMan}$, so there exists a point $\ox \in\OSubMan$ such that $\odif(\ox)\not=\xi(\ox)$, and so $\odif(\ox) = \ox$.
Let $\OSubMan'$ be the path component of $\OSubMan$ containing $\ox$.
Then $\odif(\OSubMan')=\OSubMan'$ and the restriction $\odif|_{\OSubMan'}:\OSubMan' \to \OSubMan'$ is a unique lifting of the identity map $\id_{\NSubMan}:\NSubMan\to\NSubMan$ for the covering map $p|_{\OSubMan}: \OSubMan \to \NSubMan$ having the property that $\odif(\ox)= \ox$.
Hence $\odif|_{\OSubMan}$ is the identity, i.e.\! $\odif$ is fixed on $\OSubMan'$.

Furthermore, suppose there exists another path component $\OSubMan''$ of $\OSubMan$.
Then by Lemma~\ref{lm:cov:path_comp_of_preimage} $\xi(\OSubMan') = \OSubMan''$ and $\xi(\ox)\in\OSubMan''$.
Since $\odif(\OSubMan')=\OSubMan'$, it follows that $\odif(\OSubMan'')=\OSubMan''$ and therefore $\odif(\xi(\ox)) = \xi(\ox)$.
Hence $\odif$ has a fixed point in $\OSubMan''$, and so it is fixed on $\OSubMan''$ as well.
In other words $s(\ndif) = \odif \in \tDiff(\OSurf,\OSubMan)$, which proves~\eqref{equ:s_mX_tDtMtX} for the case when $\NSubMan$ is path connected.

\medskip

b) Now suppose $\NSubMan$ is not path connected, and let $\{ \NSubMan_i\}_{i\in\Lambda}$ the collection of all path components of $\NSubMan$, so $\NSubMan = \mathop{\cup}\limits_{i\in\Lambda}\NSubMan_i$.
Then by a)
\[ s\bigl(\Diff(\NSurf,\NSubMan_{i})\bigr) = \tDiff(\OSurf,p^{-1}(\NSubMan_i)), \qquad i\in\Lambda.\]
Hence
\begin{align*}
s\bigl(\Diff(\NSurf,\NSubMan)\bigr) &=
s\bigl(\Diff(\NSurf,\mathop{\cup}\limits_{i\in\Lambda}\NSubMan_i)\bigr) =
s\bigl(\mathop{\cap}\limits_{i\in\Lambda} \Diff(\NSurf,\NSubMan_i)\bigr) =
\mathop{\cap}\limits_{i\in\Lambda} s\bigl( \Diff(\OSurf,\NSubMan_i)\bigr) = \\
&= \mathop{\cap}\limits_{i\in\Lambda} \tDiff(\OSurf,p^{-1}(\NSubMan_i)\bigr) =
 \tDiff(\OSurf, \mathop{\cup}\limits_{i\in\Lambda} p^{-1}(\NSubMan_i)\bigr) =
 \tDiff(\OSurf, p^{-1}(\NSubMan)\bigr).
\end{align*}
Lemma is proved.
\end{proof}

\begin{lemma}\label{lm:SttSt}
Let $\nfunc:\NSurf\to\Pman$ be a $\Cinfty$, $\ofunc = \nfunc\circ p:\OSurf\to\Pman$,
\begin{align*}
\Stabilizer{\nfunc} &:= \{ \ndif\in\Diff(\NSurf) \mid \nfunc\circ\ndif = \nfunc\}, &
\tStabilizer{\ofunc} &:= \{ \odif\in\tDiff(\NSurf) \mid \ofunc\circ\odif = \ofunc\}.
\end{align*}
The following statements hold true.
\begin{enumerate}[label={\rm(\alph*)}, itemsep=0.6ex]
\item\label{enum:Stab_tStab}
 $\rho(\tStabilizer{\ofunc)} = \Stabilizer{\nfunc}$ and $\rho^{-1}(\Stabilizer{\nfunc}) = \tStabilizer{\ofunc}$;
\item\label{enum:SfX_tStftX}
Suppose $\dim\NSurf=2$, $\nfunc\in\FSp{\NSurf}$, and let $\NSubMan$ be an $\nfunc$-adopted submanifold.
Then $s$ induces an isomorphism
\begin{equation}\label{equ:s_iso_Stabs}
s: \Stabilizer{\nfunc,\NSubMan} \cong\tStabilizer{\ofunc,p^{-1}(\NSubMan)}.
\end{equation}
\end{enumerate}
\end{lemma}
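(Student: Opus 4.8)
The plan is to prove Lemma~\ref{lm:SttSt} by reducing both statements to the previously established Lemma~\ref{lm:DM_tDtM_relations}.

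\smallskip

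For part~\ref{enum:Stab_tStab}, first I would observe that the condition $\ofunc\circ\odif=\ofunc$ is equivalent to $\nfunc\circ p\circ\odif=\nfunc\circ p$, and since $p\circ\odif=\rho(\odif)\circ p$ for $\odif\in\tDiff(\OSurf)$, this becomes $\nfunc\circ\rho(\odif)\circ p=\nfunc\circ p$. Because $p$ is surjective, this holds if and only if $\nfunc\circ\rho(\odif)=\nfunc$, i.e.\! $\rho(\odif)\in\Stabilizer{\nfunc}$. This immediately gives $\rho(\tStabilizer{\ofunc})\subset\Stabilizer{\nfunc}$ and $\rho^{-1}(\Stabilizer{\nfunc})=\tStabilizer{\ofunc}$. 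For the reverse inclusion in the first identity, note that $\rho$ is surjective by Lemma~\ref{lm:DM_tDtM_relations}, so given $\ndif\in\Stabilizer{\nfunc}$ pick any $\odif\in\rho^{-1}(\ndif)$; by what was just shown $\odif\in\tStabilizer{\ofunc}$, hence $\ndif\in\rho(\tStabilizer{\ofunc})$. One should also record that $\xi\in\tStabilizer{\ofunc}$ since $\ofunc\circ\xi=\nfunc\circ p\circ\xi=\nfunc\circ p=\ofunc$, so that $\tStabilizer{\ofunc}$ is $\xi$-saturated and $\ker(\rho|_{\tStabilizer{\ofunc}})=\{\id,\xi\}$, compatibly with the diagram~\eqref{equ:tDtM_z2_tDpltM}.

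\smallskip

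For part~\ref{enum:SfX_tStftX}, the key point is that the isomorphism $s\colon\Diff(\NSurf,\NSubMan)\cong\tDiff(\OSurf,p^{-1}(\NSubMan))$ of~\eqref{equ:s_DMX_tDtMtX} restricts to the stabilizers. This requires two things: that the hypothesis of part~\ref{enum:s_iso:DidMX} of Lemma~\ref{lm:DM_tDtM_relations} is satisfied for an $\nfunc$-adopted submanifold $\NSubMan$, and that $s$ maps $\Stabilizer{\nfunc,\NSubMan}$ into $\tStabilizer{\ofunc,p^{-1}(\NSubMan)}$ and back. For the first, an $\nfunc$-adopted submanifold is by definition a disjoint union of components each of which is either $2$-dimensional or a regular component of a level set, and the latter is a two-sided simple closed curve in $\NSurf$; both cases are covered by the instance ``$\dimM$-dimensional submanifold or two-sided $(\dimM-1)$-dimensional submanifold'' explicitly listed in part~\ref{enum:s_iso:DidMX}, so~\eqref{equ:s_DMX_tDtMtX} applies. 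For the second, given $\ndif\in\Stabilizer{\nfunc,\NSubMan}=\Stabilizer{\nfunc}\cap\Diff(\NSurf,\NSubMan)$, we have $s(\ndif)\in\tDiff(\OSurf,p^{-1}(\NSubMan))$ by~\eqref{equ:s_DMX_tDtMtX} and $s(\ndif)\in\tStabilizer{\ofunc}$ by part~\ref{enum:Stab_tStab} (since $\rho(s(\ndif))=\ndif\in\Stabilizer{\nfunc}$ and $\rho^{-1}(\Stabilizer{\nfunc})=\tStabilizer{\ofunc}$); hence $s(\ndif)\in\tStabilizer{\ofunc,p^{-1}(\NSubMan)}$. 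Conversely, if $\odif\in\tStabilizer{\ofunc,p^{-1}(\NSubMan)}\subset\tDiffPl(\OSurf)$, then $\ndif:=\rho(\odif)\in\Stabilizer{\nfunc}$ by part~\ref{enum:Stab_tStab} and $\ndif\in\Diff(\NSurf,\NSubMan)$ since $\rho$ carries diffeomorphisms fixed on $p^{-1}(\NSubMan)$ to diffeomorphisms fixed on $\NSubMan$; thus $\ndif\in\Stabilizer{\nfunc,\NSubMan}$ and $s(\ndif)=\odif$. Since $s$ is already a homeomorphism onto its image, these inclusions show $s$ restricts to an isomorphism~\eqref{equ:s_iso_Stabs}.

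\smallskip

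I do not expect any genuine obstacle here; the only point requiring a little care is checking that $\nfunc$-adopted submanifolds fall under the hypothesis of part~\ref{enum:s_iso:DidMX} of Lemma~\ref{lm:DM_tDtM_relations} — specifically that regular level-set components are two-sided, which follows because they have trivial normal bundle (a regular level curve of $\nfunc$ separates a neighborhood into $\{\nfunc<c\}$ and $\{\nfunc>c\}$ parts). Everything else is a formal consequence of the already-proved structural results and the surjectivity of $p$.
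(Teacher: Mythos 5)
Your proof is correct and follows essentially the same route as the paper's: part~\ref{enum:Stab_tStab} is the same pointwise equivalence (you package it via surjectivity of $p$ rather than checking both directions at a fiber, which is cleaner but not substantively different), and part~\ref{enum:SfX_tStftX} combines part~\ref{enum:Stab_tStab} with statement~\ref{enum:s_iso:DidMX} of Lemma~\ref{lm:DM_tDtM_relations} and the identity $\rho\circ s=\id$, exactly as the paper does. The one place where your emphasis differs slightly: the paper secures orientation-preservation by noting directly that $\Stabilizer{\ofunc,\OSubMan}\subset\DiffPl(\OSurf)$ (a consequence of $\ofunc$-preservation near a fixed adopted submanifold), whereas you obtain the needed containment $\tStabilizer{\ofunc,p^{-1}(\NSubMan)}\subset\tDiffPl(\OSurf)$ as a byproduct of the already-established isomorphism~\eqref{equ:s_DMX_tDtMtX}; both are valid, and you compensate by being more explicit than the paper about verifying the two-sidedness hypothesis of Lemma~\ref{lm:DM_tDtM_relations}\ref{enum:s_iso:DidMX} for $\nfunc$-adopted submanifolds.
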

\begin{proof}
Let $\odif\in\tDiff(\OSurf)$ and $\ndif=\rho(\odif)\in\Diff(\NSurf)$, so
\begin{align}\label{equ:h_th_diffs}
  \odif\circ\xi&=\xi\circ \odif, &
  p\circ \odif&=\ndif\circ p, &
  \ofunc&=\nfunc\circ p.
\end{align}
We have to show that $\ndif\in\Stabilizer{\nfunc}$ if and only if $\odif\in\tStabilizer{\ofunc}$, i.e.\! we need to deduce from~\eqref{equ:h_th_diffs} an equivalence of the following relations:
\begin{align*}
  \nfunc\circ\ndif&=\nfunc, &
  \ofunc\circ\odif&=\ofunc.
\end{align*}
Let $\ox\in\OSurf$ and $\nx=p(\ox)$.
If $\ofunc\circ\odif(\ox)=\ofunc(\ox)$, then
\[
  \nfunc\circ\ndif(\nx) =
  \nfunc\circ\ndif\circ p(\nx) =
  \nfunc\circ p \circ \odif(\ox)  =
  \ofunc \circ \odif(\ox)  =
  \ofunc(\ox)  =
  \nfunc \circ p(\ox) =\nfunc(\nx).
\]
Conversely, if $\nfunc\circ\ndif(\nx)=\nfunc(\nx)$, then
\[
  \ofunc\circ\odif(\ox) =
  \nfunc\circ p \circ \odif(\ox) =
  \nfunc\circ \ndif \circ p(\ox) =
  \nfunc \circ p(\ox) =
  \ofunc(\ox).
\]

\medskip

\ref{enum:SfX_tStftX}
Denote $\OSubMan = p^{-1}(\NSubMan)$.
Since $\OSubMan$ is an $\ofunc$-adopted submanifold, one easily checks that $\Stabilizer{\ofunc,\OSubMan} \subset \DiffPl(\OSurf)$.
Hence by~\ref{enum:Stab_tStab} and Lemma~\ref{lm:DM_tDtM_relations} $\rho$ injectively maps $\tStabilizer{\ofunc,\OSubMan}$ into $\Stabilizer{\nfunc,\NSubMan}$.

Conversely, it follows from~\ref{enum:Stab_tStab} that $s(\Stabilizer{\nfunc})\subset \tStabilizer{\ofunc}$.
Therefore we get from statement~\ref{enum:s_iso:DidMX} of Lemma~\ref{lm:DM_tDtM_relations} that
\begin{align*}
s\bigl( \Stabilizer{\nfunc,\NSubMan} \bigr) &=
s\bigl( \Stabilizer{\nfunc}\cap\Diff(\NSurf,\NSubMan) \bigr) \subset
\tStabilizer{\nfunc} \cap \tDiff(\OSurf,\OSubMan) =
\tStabilizer{\nfunc,\OSubMan}.
\end{align*}
Since $\rho\circ s = \id_{\DiffPl(\OSurf)}$, it follows that $s$ isomorphically maps $\Stabilizer{\nfunc,\NSubMan}$ onto $\tStabilizer{\nfunc,\OSubMan}$.
\end{proof}

\section{Hamiltonian like flows for $\ofunc\in\FSp{\OSurf}$.}\label{sect:ham_vh}
Let $\OSurf$ be an orientable compact surface.
\begin{definition}\label{def:ham_like_vf}
Let $\ofunc\in\FSp{\OSurf}$ and $\fSing$ be the set of critical points of $\ofunc$.
A smooth vector field $\fld$ on $\OSurf$ will be called \myemph{Hamiltonian like} for $\ofunc$ if the following conditions hold true.
\begin{enumerate}[leftmargin=5ex, topsep=0pt, label={\rm(\alph*)}]
\item\label{enum:hf:crpt}
$\fld(z)=0$ if and only if $z$ is a critical point of $\ofunc$.
\item\label{enum:hf:pres_func}
$\fld(\ofunc)\equiv0$ everywhere on $\OSurf$, that is $\ofunc$ is constant along orbits of $\fld$.
\item\label{enum:hf:local_form}
Let $z$ be a critical point of $\ofunc$.
Then there exists a local representation of $\ofunc$ at $z$ as a homogeneous polynomial $v:(\bR^2,0)\to(\bR,0)$ without multiple factors (as in Definition~\ref{def:classF}) such that in the same coordinates $(x,y)$ near the origin $0$ in $\bR^2$ we have that $\fld = -v'_{y}\,\tfrac{\partial}{\partial x} + v'_{x}\,\tfrac{\partial}{\partial y}$.
\end{enumerate}
\end{definition}
It follows from~\ref{enum:hf:crpt} and Definition~\ref{def:classF} that every orbit of $\fld$ is of one of the following types:
\begin{itemize}
 \item a critical point of $\ofunc$;
 \item a regular component of some level set of $\ofunc$, and so it is a closed orbit of $\fld$;
 \item a connected component of the sets $\crLev\setminus\fSing$, where $\crLev$ runs over all critical components of level-sets of $\ofunc$.
\end{itemize}
By \cite[Lemma~5.1]{Maksymenko:AGAG:2006} or \cite[Lemma~16]{Maksymenko:ProcIM:ENG:2010} for every $\ofunc\in\FSp{\OSurf}$ there exists a Hamiltonian like vector field.
For the proof take the Hamiltonian vector field $\fld$ for $\ofunc$ with respect to any symplectic form $\orb$ on $\OSurf$, and then properly change $\fld$ near each critical point of $\ofunc$ in accordance with~\ref{enum:hf:local_form} of Definition~\ref{def:ham_like_vf}.

Let $\fld$ be a Hamiltonian like vector field for $\ofunc$.
Since $\ofunc$ takes constant values on boundary components of $\OSurf$, it follows that $\fld$ is tangent to $\partial\OSurf$ and therefore it generates a flow $\flow:\OSurf \times \bR\to \OSurf$ which will also be called \myemph{Hamiltonian like} for $\ofunc$.

For each smooth function $\alpha:\OSurf\to\bR$ let $\flow_{\alpha}:\OSurf\to\OSurf$ be the map defined by
\begin{equation}\label{equ:shift_via_alpha}
\flow_{\alpha}(x) = \flow(x,\alpha(x)), \qquad x\in\OSurf.
\end{equation}
We will call $\flow_{\alpha}$ the \myemph{shift along orbits of $\flow$} via the function $\alpha$, and $\alpha$ will in turn be called a \myemph{shift function} for $\flow_{\alpha}$.

Evidently, condition~\ref{enum:hf:pres_func} of Definition~\ref{def:ham_like_vf} is equivalent to the assumption that
\[\ofunc\circ\flow_t = \ofunc\] for all $t\in\bR$, that is $\flow_t\in\Stabilizer{\ofunc}$.

More generally, since $\flow_{\alpha}$ leaves invariant each orbit of $\flow$, we see that $\ofunc\circ\flow_{\alpha}=\ofunc$ for every function $\alpha\in\Ci{\OSurf}{\bR}$.
In particular, $\flow_{\alpha}$ is a \myemph{diffeomorphism} if and only if $\flow_{\alpha} \in \Stabilizer{\ofunc}$.
Moreover, in this case $\flow_{\alpha} \in \StabilizerId{\ofunc}$ and $\{\flow_{t\alpha}\}_{t\in[0,1]}$ is an isotopy between $\id_{\OSurf}$ and $\flow_{\alpha}$.

Denote by $\fld(\alpha)$ the Lie derivative of $\alpha$ with respect to $\fld$ and let
\begin{equation}\label{equ:shift_funcs_for_diff}
    \DFunc = \{ \alpha\in\Ci{\OSurf}{\bR} \mid 1+\fld(\alpha)>-0 \}.
\end{equation}

\begin{theorem}\label{th:charact_Stabf}{\rm\cite[Theorem~1.3]{Maksymenko:AGAG:2006}, \cite[Theorem~3]{Maksymenko:ProcIM:ENG:2010}.}
Let $\ofunc\in\FSp{\OSurf}$, $\flow:\OSurf\times\bR\to\OSurf$ be the flow generated by some Hamiltonian vector field $\fld$, and $\Sh{\fld}:\DFunc \to \StabilizerId{\ofunc}$ be the map defined by $\Sh{\fld}(\alpha) = \flow_{\alpha}$.
If $\ofunc$ has at least one \myemph{saddle} or a \myemph{degenerate local extreme}, then $\Sh{\fld}$ is a \myemph{homeomorphism} with respect to $C^{\infty}$ topologies and $\StabilizerId{\ofunc}$ is contractible (because $\DFunc$ is convex).

Otherwise, there exists $\theta\in\DFunc$ such that $\Sh{\fld}$ can be represented as a composition
\[
\Sh{\fld}: \DFunc \xrightarrow{~\text{quotient}~}
\DFunc/\{n\theta\}_{n\in\bZ} \xrightarrow{~\text{homeomorphism}~}
\StabilizerId{\ofunc}
\]
of the quotient map by the closed discrete subgroup $Z = \{n\theta\}_{n\in\bZ}$ of $\DFunc$ and a homeomorphism of the quotient of $\DFunc$ by $Z$ onto $\StabilizerId{\ofunc}$.
In particular, $\Sh{\fld}$ is an infinite cyclic covering map and $\StabilizerId{\ofunc}$ is homotopy equivalent to the circle.
\end{theorem}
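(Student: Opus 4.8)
The plan is to check, in order, that $\Sh{\fld}$ is well defined and continuous, that it is surjective onto $\StabilizerId{\ofunc}$, and what its point preimages are; the dichotomy will then follow from the last point. For well-definedness, the computational core is that, for $\alpha\in\Ci{\OSurf}{\bR}$, the map $\flow_\alpha$ is a local diffeomorphism precisely when $1+\fld(\alpha)>0$ everywhere. Away from the set of critical points of $\ofunc$ this is immediate in a flow-box chart $(u,v)$ with $\fld=\partial/\partial v$, where $\flow_\alpha$ is $(u,v)\mapsto(u,v+\alpha)$ with Jacobian $1+\fld(\alpha)$; at a critical point one uses the homogeneous-polynomial normal form of~\ref{enum:hf:local_form} of Definition~\ref{def:ham_like_vf} to check that $d\flow_\alpha$ is there nonsingular. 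A local diffeomorphism of a compact surface is a diffeomorphism as soon as it is injective, and injectivity of $\flow_\alpha$ is checked orbit by orbit: on a closed orbit the inequality $1+\fld(\alpha)>0$ makes the first-return map an orientation-preserving circle diffeomorphism, on a non-closed orbit it makes the motion a strictly monotone reparametrisation, critical points are fixed, and a short gluing over the orbits of one singular level component finishes it. Since $\ofunc\circ\flow_\alpha=\ofunc$ for every $\alpha$, a diffeomorphism $\flow_\alpha$ lies in $\Stabilizer{\ofunc}$, and convexity of $\DFunc$ (the intersection of $\Ci{\OSurf}{\bR}$ with the open half-space $\{\fld(\alpha)>-1\}$, which contains $0$) gives the isotopy $t\mapsto\flow_{t\alpha}$, so $\flow_\alpha\in\StabilizerId{\ofunc}$. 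Continuity of $\Sh{\fld}$ in the $C^\infty$ topologies is a standard consequence of smooth dependence of $\flow$ on its time argument.

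\emph{Surjectivity.} Let $\dif\in\StabilizerId{\ofunc}$. One first shows that $\dif$ preserves every orbit of $\flow$ with its orientation: $\dif$ preserves each level set of $\ofunc$ and permutes its connected components, but being isotopic to $\id_{\OSurf}$ it induces the trivial automorphism of the Kronrod-Reeb graph $\Gf$, hence fixes each component $\crLev$ of each level set; a further trivial-combinatorics argument for the finite set of orbits inside a singular component $\crLev$ (critical points and separatrices) forces each of them to be preserved, and orientations are preserved because $\dif$ is orientation-preserving on the nearby regular orbits. Granting this, one defines $\alpha$ on the complement of the critical set by continuing along orbits the locally unique small time $t$ with $\flow(x,t)=\dif(x)$; the crucial point is that $\alpha$ extends smoothly across each critical point, which is again a local statement from the normal form in~\ref{enum:hf:local_form} (an orbit-preserving diffeomorphism germ at a critical point is itself a shift). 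Then $\alpha\in\DFunc$ because $\dif$ and its restriction to each orbit are orientation-preserving, and $\flow_\alpha=\dif$. Alternatively, for a general $\dif$ in the path component rather than one near $\id_{\OSurf}$, this construction yields continuous local sections of $\Sh{\fld}$, so its image is open; being the image of the path-connected convex set $\DFunc$ under a continuous map and containing $\id_{\OSurf}=\flow_{0}$, this image is a path-connected open subgroup of $\StabilizerId{\ofunc}$, hence all of it.

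\emph{Point preimages and the dichotomy.} If $\flow_\alpha=\flow_\beta$, then $\gamma:=\alpha-\beta$ takes values in $T_{x}\bZ$ on each closed orbit (of period $T_{x}$) and vanishes on each non-closed orbit; by continuity it equals $n\,T_{x}$ on each closed orbit for a locally constant integer $n$. If $\ofunc$ has a saddle or a degenerate local extreme, some singular level component contains a non-closed orbit along which $\gamma=0$ and near whose closure the periods tend to $\infty$; boundedness of $\gamma$ forces $n=0$ there, and connectedness of the complement of the critical set gives $\gamma\equiv0$. Hence $\Sh{\fld}$ is a continuous bijection, open by the previous step, so a homeomorphism, and $\StabilizerId{\ofunc}$ is contractible since $\DFunc$ is convex. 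Otherwise every critical point is a non-degenerate local extreme, and the period function extends (by the normal form at the extrema) to a smooth positive function $\theta$ on $\OSurf$ with $\fld(\theta)=0$, so $\theta\in\DFunc$ and $\flow_\theta=\id_{\OSurf}$; the analysis of $\gamma$ now gives $\gamma\in\{n\theta\}_{n\in\bZ}=:Z$, so the fibres of $\Sh{\fld}$ are exactly the cosets of the discrete closed subgroup $Z\cong\bZ$ of $\DFunc$. Thus $\Sh{\fld}$ factors through the quotient $\DFunc\to\DFunc/Z$ followed by a continuous open bijection $\DFunc/Z\to\StabilizerId{\ofunc}$, i.e.\ a homeomorphism; the quotient map is an infinite cyclic covering, and since $\DFunc$ is contractible, $\DFunc/Z$ and hence $\StabilizerId{\ofunc}$ is homotopy equivalent to the circle.

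\emph{Main obstacle.} The delicate point throughout is the local analysis at the critical points, especially the degenerate ones, where only the homogeneous-polynomial normal form of~\ref{enum:hf:local_form} is available and no smooth linearisation: there one must prove both that $\flow_\alpha$ is a local diffeomorphism exactly under $1+\fld(\alpha)>0$ and that an orbit-preserving diffeomorphism germ is a smooth shift (which is what makes the shift function built on the complement of the critical set extend smoothly over it), together with the smoothness of the period function across non-degenerate local extrema. This is precisely the technical content carried out in the cited works.
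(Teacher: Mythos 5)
The paper does not actually prove Theorem~\ref{th:charact_Stabf}; it is quoted from \cite[Theorem~1.3]{Maksymenko:AGAG:2006} and \cite[Theorem~3]{Maksymenko:ProcIM:ENG:2010}, so there is no internal argument to compare against. Your outline faithfully reconstructs the strategy of those cited proofs --- well-definedness of $\Sh{\fld}$ via $1+\fld(\alpha)>0$ and orbit-wise injectivity, surjectivity via orbit preservation plus existence of a smooth shift function, identification of the fibres, and the period function $\theta$ in the second case --- and you correctly isolate as the hard technical core the local analysis at (possibly degenerate) critical points, which is precisely what the references carry out.

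Two steps are thinner than the rest, even at sketch level. First, in deducing $\gamma\equiv 0$ in the saddle/degenerate-extreme case: the locally constant integer $n$ is defined only on the union of closed orbits, which is a disjoint union of open annuli (or a torus), so connectedness of the complement of the critical set is not quite enough; you need that \emph{every} such annulus has an end at a critical component near which the periods are unbounded (if some annulus had both ends at non-degenerate extremes or at $\partial\OSurf$, then $\OSurf$ would be a sphere, disk, annulus or torus carrying only non-degenerate extremes, i.e.\ the other case of the dichotomy). Note also that a degenerate local extreme has no separatrix, so the mechanism there is not ``$\gamma=0$ on a non-closed orbit'' but only the blow-up of periods of nearby closed orbits, which fortunately still holds for the degree~$\geq 4$ homogeneous normal forms. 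Second, the assertion that every $\dif\in\StabilizerId{\ofunc}$ preserves each orbit of $\flow$ with its orientation is itself one of the substantial steps of the cited proof: triviality of the induced automorphism of the Kronrod--Reeb graph handles the regular orbits, but preservation of the individual critical points and separatrices inside a degenerate critical component is not a ``trivial combinatorics'' matter. Neither point breaks the outline; both belong to the material you explicitly (and appropriately) defer to the references.
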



\section{Groups $\FolStabilizer{\nfunc}$}\label{sect:FolStab_Grp}
For $\nfunc\in\FSp{\NSurf}$ let $\FolStabilizer{\nfunc}$ be the \myemph{normal} subgroup of $\Stabilizer{\nfunc}$ consisting of diffeomorphisms $\ndif$ of $\NSurf$ having the following two properties:
\begin{enumerate}[topsep=0pt]
\item[1)]
$\ndif$ leaves invariant every connected component of each level-set of $\nfunc$;
\item[2)]
if $z$ is a \myemph{degenerate local extreme} of $\nfunc$, so, in particular, $\ndif(z)=z$, then the tangent map $T_{z}\ndif: T_z\NSurf\to T_z\NSurf$ is the identity.
\end{enumerate}
For a closed subset $\NSubMan$ of $\NSurf$ define the following three groups:
\begin{align*}
\Diff(\NSurf,\NSubMan) &:= \{\ndif\in\Diff(\NSurf) \mid \ndif \ \text{is fixed on} \ \NSubMan \}, \\
\DiffNbh(\NSurf,\NSubMan) &:= \{\ndif\in\Diff(\NSurf) \mid \ndif \ \text{is fixed on some neighborhood of} \ \NSubMan \}, \\
\DiffId(\NSurf,\NSubMan) &:= \{\ndif\in\Diff(\NSurf,\NSubMan) \mid \ndif \ \text{is isotopic to $\id_{\NSurf}$ rel. $\NSubMan$} \}.
\end{align*}
Define also their intersections with $\FolStabilizer{\nfunc}$ and $\Stabilizer{\nfunc}$ as follows:
\begin{equation}\label{equ:subgroups_of_fStabX}
\begin{aligned}
\FolStabilizer{\nfunc,\NSubMan}&:= \FolStabilizer{\nfunc} \cap \Diff(\NSurf,\NSubMan), & \ \qquad\
\Stabilizer{\nfunc,\NSubMan} &:= \Stabilizer{\nfunc}\cap \Diff(\NSurf,\NSubMan), \\
\FolStabilizerNbh{\nfunc,\NSubMan}&:= \FolStabilizer{\nfunc} \cap \DiffNbh(\NSurf,\NSubMan), & \
\StabilizerNbh{\nfunc,\NSubMan} &:= \Stabilizer{\nfunc}\cap \DiffNbh(\NSurf,\NSubMan), \\
\FolStabilizerIsotId{\nfunc,\NSubMan}&:= \FolStabilizer{\nfunc} \cap \DiffId(\NSurf,\NSubMan), & \
\StabilizerIsotId{\nfunc,\NSubMan} &:= \Stabilizer{\nfunc}\cap \DiffId(\NSurf,\NSubMan), \\
\FolStabilizerNbhIsotId{\nfunc,\NSubMan}&:= \FolStabilizerIsotId{\nfunc} \cap \DiffNbh(\NSurf,\NSubMan), & \
\StabilizerNbhIsotId{\nfunc,\NSubMan} &:= \StabilizerIsotId{\nfunc}\cap \DiffNbh(\NSurf,\NSubMan),
\end{aligned}
\end{equation}
where we follow the notation convention that $\NSubMan$ is omitted if it is empty.
In particular, $\FolStabilizerIsotId{\nfunc}=\FolStabilizer{\nfunc} \cap \DiffIdM$.
The following lemma can be proved similarly to \cite[Lemma~3.4]{Maksymenko:DefFuncI:2014} and we leave its proof for the reader.
\begin{lemma}\label{lm:Lf_Sf}{\rm cf.~\cite[Lemma~3.4]{Maksymenko:DefFuncI:2014}.}
All the groups in~\eqref{equ:subgroups_of_fStabX} are normal subgroups of $\Stabilizer{\nfunc,\NSubMan}$.

The groups $\FolStabilizerIsotId{\nfunc,\NSubMan}$, $\FolStabilizer{\nfunc,\NSubMan}$, $\StabilizerIsotId{\nfunc,\NSubMan}$ are unions of path components of $\Stabilizer{\nfunc,\NSubMan}$.
In particular, $\StabilizerId{\nfunc,\NSubMan}$ is the identity path component of each of these groups.

Similarly, the groups $\FolStabilizerNbhIsotId{\nfunc,\NSubMan}$, $\FolStabilizerNbh{\nfunc,\NSubMan}$, and $\StabilizerNbhIsotId{\nfunc,\NSubMan}$ are also unions of path components of $\StabilizerNbh{\nfunc,\NSubMan}$.
\qed
\end{lemma}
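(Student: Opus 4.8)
The plan is to peel off all the formal content and reduce the whole statement to a single substantive fact — that $\FolStabilizer{\nfunc}$ contains the identity path component $\StabilizerId{\nfunc}$ of $\Stabilizer{\nfunc}$ — whose proof I would take essentially verbatim from the orientable case treated in \cite[Lemma~3.4]{Maksymenko:DefFuncI:2014}. For \emph{normality}, note that each group in \eqref{equ:subgroups_of_fStabX} is a finite intersection of subgroups of $\Diff(\NSurf)$ chosen from $\Stabilizer{\nfunc}$, $\FolStabilizer{\nfunc}$, $\DiffId(\NSurf)$, $\DiffId(\NSurf,\NSubMan)$, $\DiffNbh(\NSurf,\NSubMan)$, and that each such intersection already lies in $\Stabilizer{\nfunc,\NSubMan}$. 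Now $\FolStabilizer{\nfunc}$ is normal in $\Stabilizer{\nfunc}$ by its definition; $\DiffId(\NSurf)$ and $\DiffId(\NSurf,\NSubMan)$, being identity path components of topological groups, are normal in $\Diff(\NSurf)$ and $\Diff(\NSurf,\NSubMan)$ respectively; and $\DiffNbh(\NSurf,\NSubMan)$ is normal in $\Diff(\NSurf,\NSubMan)$ because if $\ndif$ is fixed on a neighbourhood $U$ of $\NSubMan$ and $\gdif\in\Diff(\NSurf,\NSubMan)$, then $\gdif\ndif\gdif^{-1}$ is fixed on the neighbourhood $\gdif(U)$ of $\gdif(\NSubMan)=\NSubMan$. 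Combining this with the elementary facts that $N\cap H$ is normal in $H$ when $N$ is normal in some $G\supseteq H$, and that an intersection of normal subgroups is normal, gives normality of every group in \eqref{equ:subgroups_of_fStabX} in $\Stabilizer{\nfunc,\NSubMan}$.

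For the claims about \emph{unions of path components} I would use twice the purely topological triviality that if $A$ is a union of path components of a space $X$ and $Y\subseteq X$, then $A\cap Y$ is a union of path components of $Y$, together with the fact that a subgroup of a topological group is a union of path components exactly when it contains the identity path component. First, $\StabilizerIsotId{\nfunc,\NSubMan}=\Stabilizer{\nfunc,\NSubMan}\cap\DiffId(\NSurf,\NSubMan)$ is a union of path components of $\Stabilizer{\nfunc,\NSubMan}$ since $\DiffId(\NSurf,\NSubMan)$ is one of $\Diff(\NSurf,\NSubMan)$ — this needs nothing more. Next, granting the key claim that $\FolStabilizer{\nfunc}$ is a union of path components of $\Stabilizer{\nfunc}$, one gets the same for $\FolStabilizer{\nfunc,\NSubMan}=\FolStabilizer{\nfunc}\cap\Stabilizer{\nfunc,\NSubMan}$ and for $\FolStabilizerIsotId{\nfunc,\NSubMan}=\FolStabilizer{\nfunc,\NSubMan}\cap\StabilizerIsotId{\nfunc,\NSubMan}$; and each of the three ``$\nb$'' groups may be rewritten as $\FolStabilizer{\nfunc}\cap\StabilizerNbh{\nfunc,\NSubMan}$, $\FolStabilizerIsotId{\nfunc}\cap\StabilizerNbh{\nfunc,\NSubMan}$, $\StabilizerIsotId{\nfunc}\cap\StabilizerNbh{\nfunc,\NSubMan}$ and handled by intersecting inside $\StabilizerNbh{\nfunc,\NSubMan}$, using that $\FolStabilizerIsotId{\nfunc}=\FolStabilizer{\nfunc}\cap\DiffId(\NSurf)$ and $\StabilizerIsotId{\nfunc}=\Stabilizer{\nfunc}\cap\DiffId(\NSurf)$ are unions of path components of $\Stabilizer{\nfunc}$ by the same reasoning. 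The closing assertion, that $\StabilizerId{\nfunc,\NSubMan}$ is the identity path component of each listed group, is then immediate: each of them contains $\StabilizerId{\nfunc,\NSubMan}$ and is a union of path components of $\Stabilizer{\nfunc,\NSubMan}$, whose identity path component is $\StabilizerId{\nfunc,\NSubMan}$ by definition.

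Thus the only genuine step is: if $\ndif=h_1$ for a path $\{h_t\}_{t\in[0,1]}\subset\Stabilizer{\nfunc}$ with $h_0=\id_{\NSurf}$, then $\ndif$ satisfies conditions 1) and 2) defining $\FolStabilizer{\nfunc}$. For 1), the action of $\Stabilizer{\nfunc}$ on the Kronrod--Reeb graph $\Gf$ (finite, as $\NSurf$ is a compact surface) yields a homomorphism to the finite group $\Aut(\Gf)$; using continuity of $h\mapsto h(x)$, the local product structure of $\nfunc$ along regular components of level sets, and discreteness of the set of critical points, one checks this homomorphism is locally constant, hence trivial on the connected set $\StabilizerId{\nfunc}$ — which is precisely condition 1). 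For 2), near a degenerate local extreme $z$ write $\nfunc$ in suitable coordinates as a sign-definite homogeneous polynomial $v$ without multiple factors, necessarily of degree $\geq4$; since $h_t(z)=z$ for all $t$ (by the critical-point part of the previous argument) and $v\circ g_t=v$ for the local representatives $g_t$ of $h_t$, comparing lowest-degree terms shows that $T_zh_t$ lies in a fixed finite group conjugate to $\{A\in\GL(2,\bR)\mid v\circ A=v\}$, the finiteness coming exactly from $v$ being degenerate and squarefree; hence the path $T_zh_t$ starting at the identity is constant and $T_zh_1=\id$. These are the arguments of \cite[Lemma~3.4]{Maksymenko:DefFuncI:2014} (see also \cite{Maksymenko:AGAG:2006}, \cite{Maksymenko:ProcIM:ENG:2010}), and since none of them uses orientability of the surface they carry over unchanged to the present non-orientable setting. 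I therefore expect no genuinely new difficulty here: the main points requiring care are this transfer and the (known) finiteness of the linear symmetry group of a degenerate squarefree homogeneous singularity.
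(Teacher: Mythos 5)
Your proposal is correct and follows exactly the route the paper intends: the statement is reduced to the single substantive claim $\StabilizerId{\nfunc}\subseteq\FolStabilizer{\nfunc}$ via elementary group/topology lemmas, and that claim is verified via (1) preservation of level-set components along paths in $\Stabilizer{\nfunc}$ and (2) finiteness of the linear symmetry group of a degenerate squarefree homogeneous polynomial, exactly as in \cite[Lemma~3.4]{Maksymenko:DefFuncI:2014}, and indeed none of those arguments uses orientability. The paper itself omits the proof, leaving it ``for the reader,'' so your write-up supplies precisely what is expected.
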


It follows that $\pi_0\FolStabilizer{\nfunc,\NSubMan}$ can be regarded as a normal subgroup of $\pi_0\Stabilizer{\nfunc,\NSubMan}$.
Moreover, if $\nfunc$ \myemph{has no degenerate local extremes}, then the corresponding quotient
\begin{align*}
\GrpKR{\nfunc,\NSubMan}
 &:=\dfrac{\Stabilizer{\nfunc,\NSubMan}}{\FolStabilizer{\nfunc,\NSubMan}} =
\Bigl. \frac{\Stabilizer{\nfunc,\NSubMan}}{\StabilizerId{\nfunc,\NSubMan}} \Bigr/
\frac{\FolStabilizer{\nfunc,\NSubMan}}{\StabilizerId{\nfunc,\NSubMan}}
= \dfrac{\pi_0\Stabilizer{\nfunc,\NSubMan}}{\pi_0\FolStabilizer{\nfunc,\NSubMan}}.
\end{align*}
can be interpreted as the group of automorphisms of the Kronrod-Reeb graph of $\nfunc$ induced by diffeomorphisms from $\Stabilizer{\nfunc,\NSubMan}$, see e.g.~\cite{Kronrod:UMN:1950}, \cite{Reeb:ASI:1952}, \cite{BolsinovFomenko:ENG:2004}, \cite{Maksymenko:ProcIM:ENG:2010}.
If $\nfunc$ has degenerate local extremes, then there is a similar interpretation of $\GrpKR{\nfunc,\NSubMan}$ but one should modify Kronrod-Reeb graph of $\nfunc$ by gluing additional edges to each vertex corresponding to each degenerate local extreme, see for details~\cite{Maksymenko:ProcIM:ENG:2010}.
Similarly, one can define
\begin{align*}
\GrpKRIsotId{\nfunc,\NSubMan} &=
 \dfrac{\pi_0\StabilizerIsotId{\nfunc,\NSubMan}}{\pi_0\FolStabilizerIsotId{\nfunc,\NSubMan}}, &
\GrpKRNbh{\nfunc,\NSubMan} &=
 \dfrac{\pi_0\StabilizerNbh{\nfunc,\NSubMan}}{\pi_0\FolStabilizerNbh{\nfunc,\NSubMan}}, &
\GrpKRNbhIsotId{\nfunc,\NSubMan} &=
 \dfrac{\pi_0\StabilizerNbhIsotId{\nfunc,\NSubMan}}{\pi_0\FolStabilizerNbhIsotId{\nfunc,\NSubMan}}.
\end{align*}

Our aim is to prove the following statement extending~\cite[Corollary~7.2]{Maksymenko:DefFuncI:2014} to non-orientable case and deduce from it several useful results.

\begin{theorem}\label{th:hom_equ}
{\rm cf. \cite[Corollary~6.1]{Maksymenko:AGAG:2006}, \cite[Corollary~7.2]{Maksymenko:DefFuncI:2014}.}
Let $\NSurf$ be a compact surface, $\nfunc\in\FSp{\NSurf}$, $\NSubMan\subset\NSurf$ be a compact $\nfunc$-adopted submanifold, and $\NX$ be an $\nfunc$-regular neighborhood of $\NSubMan$.
Then the following inclusions are homotopy equivalences:
\begin{eqnarray}
\label{equ:SfNX_SfnbX_SfX}
\Stabilizer{\nfunc,\NX} \ \subset \
\StabilizerNbh{\nfunc,\NSubMan} \ \subset \
\Stabilizer{\nfunc,\NSubMan}, \\
\label{equ:SprfNX_SprfnbX_SprfX}
\StabilizerIsotId{\nfunc,\NX} \ \subset \
\StabilizerNbhIsotId{\nfunc,\NSubMan} \ \subset \
\StabilizerIsotId{\nfunc,\NSubMan}, \\
\label{equ:DeltafNX_DeltafnbX_DeltafX}
\FolStabilizer{\nfunc,\NX} \ \subset \
\FolStabilizerNbh{\nfunc,\NSubMan} \ \subset \
\FolStabilizer{\nfunc,\NSubMan}, \\
\label{equ:DeltaPrfNX_DeltaPrfnbX_DeltaPrfX}
\FolStabilizerIsotId{\nfunc,\NX} \ \subset \
\FolStabilizerNbhIsotId{\nfunc,\NSubMan} \ \subset \
\FolStabilizerIsotId{\nfunc,\NSubMan}.
\end{eqnarray}
\end{theorem}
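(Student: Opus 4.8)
The plan is to reduce everything to the orientable case, which is already known, by passing to the oriented double covering, and then to run the orientable argument $\bZ_{2}$-equivariantly. We may clearly assume $\NSurf$ is connected. If $\NSurf$ is orientable, the four chains are exactly the content of \cite[Corollary~6.1]{Maksymenko:AGAG:2006} and \cite[Corollary~7.2]{Maksymenko:DefFuncI:2014}, so assume $\NSurf$ is non-orientable and let $p\colon\OSurf\to\NSurf$ be the oriented double covering of $\NSurf$ with corresponding free involution $\xi$. Put $\ofunc:=\nfunc\circ p\in\FSp{\OSurf}$, $\cov{\NSubMan}:=p^{-1}(\NSubMan)$ and $\cov{\NX}:=p^{-1}(\NX)$; then $\cov{\NSubMan}$ is a $\xi$-invariant compact $\ofunc$-adopted submanifold and $\cov{\NX}$ a $\xi$-invariant $\ofunc$-regular neighborhood of $\cov{\NSubMan}$. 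By Lemma~\ref{lm:SttSt} together with its routine refinements for the ``$\StabilizerNbh$'' and ``isotopy trivial'' decorations (the latter via Lemma~\ref{lm:DM_tDtM_relations}), the isomorphism $s$ carries the chains~\eqref{equ:SfNX_SfnbX_SfX} and~\eqref{equ:SprfNX_SprfnbX_SprfX} onto the chains of \emph{symmetric} subgroups $\tStabilizer{\ofunc,\cov{\NX}}\subset\tStabilizerNbh{\ofunc,\cov{\NSubMan}}\subset\tStabilizer{\ofunc,\cov{\NSubMan}}$ and $\tStabilizerIsotId{\ofunc,\cov{\NX}}\subset\tStabilizerNbhIsotId{\ofunc,\cov{\NSubMan}}\subset\tStabilizerIsotId{\ofunc,\cov{\NSubMan}}$, so it is enough to prove that these symmetric inclusions are homotopy equivalences.

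For this I would revisit the proof of \cite[Corollary~7.2]{Maksymenko:DefFuncI:2014}: there one fixes a Hamiltonian like flow $\flow$ for $\ofunc$ and builds explicit deformation retractions of $\Stabilizer{\ofunc,\cov{\NSubMan}}$ onto $\StabilizerNbh{\ofunc,\cov{\NSubMan}}$ and of $\StabilizerNbh{\ofunc,\cov{\NSubMan}}$ onto $\Stabilizer{\ofunc,\cov{\NX}}$ (and likewise on the isotopy trivial level), assembled from shift maps $\flow_{\alpha}$, from the convexity of the space $\DFunc$ of shift functions supplied by Theorem~\ref{th:charact_Stabf}, and from bump functions localized near $\cov{\NSubMan}$ and near the regular level curves bounding $\cov{\NX}$. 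I would carry out this construction $\xi$-equivariantly. Since $\ofunc\circ\xi=\ofunc$ and $\xi$ reverses orientation, one may choose the Hamiltonian like field $\fld$ for $\ofunc$ so that $\xi_{*}\fld=-\fld$ (for instance as a Hamiltonian field of $\ofunc$ with respect to an anti-invariant area form on $\OSurf$, corrected near the critical points), whence $\xi\circ\flow_{t}=\flow_{-t}\circ\xi$; then $\flow_{\alpha}$ commutes with $\xi$ precisely when $\alpha\circ\xi=-\alpha$, and the subspace of such $\xi$-anti-invariant functions in $\DFunc$ is again convex. Choosing in addition all auxiliary bump and partition functions $\xi$-invariant --- possible because $\xi$ acts freely on $\OSurf$ and induces an involution on the Kronrod--Reeb graph $\Gamma(\ofunc)$, over a fundamental domain of which one makes the usual choices --- one checks that the resulting deformation retractions are equivariant for the conjugation action $\dif\mapsto\xi\dif\xi^{-1}$ on $\Stabilizer{\ofunc,\cov{\NSubMan}}$, whose fixed-point set is $\tStabilizer{\ofunc,\cov{\NSubMan}}$ and which preserves $\StabilizerNbh{\ofunc,\cov{\NSubMan}}$ and $\Stabilizer{\ofunc,\cov{\NX}}$. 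Passing to fixed points then yields the required deformation retractions of the symmetric chains, which proves~\eqref{equ:SfNX_SfnbX_SfX} and~\eqref{equ:SprfNX_SprfnbX_SprfX}.

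To obtain the foliated chains~\eqref{equ:DeltafNX_DeltafnbX_DeltafX} and~\eqref{equ:DeltaPrfNX_DeltaPrfnbX_DeltaPrfX}, transport the equivariant retraction back along $s$ to deformation retractions $r_{t}$ of $\Stabilizer{\nfunc,\NSubMan}$ onto $\Stabilizer{\nfunc,\NX}$ and of $\StabilizerNbh{\nfunc,\NSubMan}$ onto an intermediate subgroup, with every path $t\mapsto r_{t}(\dif)$ remaining in $\Stabilizer{\nfunc,\NSubMan}$. By Lemma~\ref{lm:Lf_Sf} the groups $\FolStabilizer{\nfunc,\NSubMan}$, $\StabilizerIsotId{\nfunc,\NSubMan}$, $\FolStabilizerIsotId{\nfunc,\NSubMan}$ and their ``$\StabilizerNbh$'' analogues are unions of path components of the corresponding ambient stabilizers, and one has $\FolStabilizer{\nfunc,\NX}=\FolStabilizer{\nfunc,\NSubMan}\cap\Stabilizer{\nfunc,\NX}$ and similarly for the rest; hence $r_{t}$ restricts to deformation retractions of each of these subgroups onto the corresponding subgroup fixed on $\NX$, which gives all four chains.

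The hard part is the middle step --- making the construction of \cite[\S7]{Maksymenko:DefFuncI:2014} honestly $\bZ_{2}$-equivariant. Two points need care. First, producing a Hamiltonian like field with $\xi_{*}\fld=-\fld$: the orientation-reversal of $\xi$ is essential here, since $\xi$ carries a Hamiltonian like germ at a critical point $z$ to one at $\xi(z)$ only after a swap of the local coordinates, so Maksymenko's existence argument has to be run over a fundamental domain of the $\xi$-action on $\Gamma(\ofunc)$ and then symmetrized. Second, one must verify that the local sections $\dif\mapsto(\text{shift function})$ underlying the deformation retractions depend naturally enough on $\dif$ that, once $\flow$ and the auxiliary data are $\xi$-symmetric, conjugation by $\xi$ corresponds to $\alpha\mapsto-\alpha\circ\xi$ on shift functions; equivalently, that symmetric diffeomorphisms yield $\xi$-anti-invariant shift functions, so that the symmetric subgroups stay invariant under the retractions.
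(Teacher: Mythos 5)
Your plan matches the paper's proof almost exactly: the paper also first reduces~\eqref{equ:SprfNX_SprfnbX_SprfX}--\eqref{equ:DeltaPrfNX_DeltaPrfnbX_DeltaPrfX} to~\eqref{equ:SfNX_SfnbX_SfX} via the union-of-path-components observation (Lemma~\ref{lm:Lf_Sf}), passes to the oriented double cover with a skew-symmetric Hamiltonian like field satisfying $\xi^{*}\fld=-\fld$ (hence $\xi\circ\flow_t=\flow_{-t}\circ\xi$), and then proves a $\bZ_2$-equivariant version of \cite[Lemma~7.1]{Maksymenko:DefFuncI:2014} (the paper's Sublemma~\ref{lm:deform_in_stab}, parts~\ref{enum:deform_in_stab:beta_skew_symm}--\ref{enum:deform_in_stab:skew_symm_Hinv}) showing that with $\xi$-invariant bump function $\mu$ the shift function $\beta(\odif)$ is anti-invariant and the shift deformation preserves $\tStabilizer{\ofunc}$, applied with $\gamma\equiv 0$. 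The two ``points needing care'' you flag are precisely what that sublemma settles: the skew-symmetrization of $\fld$ via averaging and local correction near critical points, and the anti-invariance of the uniquely extended shift function (case~\ref{enum:skewsym:gamma_is_zero}, using uniqueness of the extension from \cite[Lemma~6.1(ii)]{Maksymenko:DefFuncI:2014}).
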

\begin{proof}
The case when $\NSurf$ is orientable is proved in~\cite{Maksymenko:DefFuncI:2014}.
So our aim is to extend it to the case when $\NSurf$ is non-orientable.
In fact the proof is an adaptation of~\cite[Lemma~7.1]{Maksymenko:DefFuncI:2014} similar to~\cite[Lemma~4.14]{Maksymenko:AGAG:2006} and therefore we only indicate the principal arguments.

\medskip

Also notice that similarly to~\cite[Corollary~7.2]{Maksymenko:DefFuncI:2014} it suffices to prove that the inclusions~\eqref{equ:SfNX_SfnbX_SfX} are homotopy equivalences, so they induce bijections between the path components of the corresponding groups, and the inclusions of the corresponding path components are homotopy equivalences.

Indeed, notice that the groups in~\eqref{equ:SprfNX_SprfnbX_SprfX} are intersections of the corresponding groups from~\eqref{equ:SfNX_SfnbX_SfX} with the path component $\DiffId(\NSurf)$ of the larger group $\Diff(\NSurf)$.
If a path component $\mathcal{K}$ of any group in~\eqref{equ:SprfNX_SprfnbX_SprfX} intersects $\DiffId(\NSurf)$, then $\mathcal{K}$ is contained in $\DiffId(\NSurf)$.
Hence the inclusions~\eqref{equ:SprfNX_SprfnbX_SprfX} yield bijections between the path components of the corresponding groups, and due to~\eqref{equ:SfNX_SfnbX_SfX} the inclusions of path components are homotopy equivalences.

The deduction that~\eqref{equ:DeltafNX_DeltafnbX_DeltafX} and~\eqref{equ:DeltaPrfNX_DeltaPrfnbX_DeltaPrfX} are homotopy equivalences is similar.

\medskip

Thus assume that $\NSurf$ is a non-orientable connected compact surface.
Consider its oriented double covering $p\colon \OSurf\to\NSurf$, and let $\xi\colon \OSurf\to \OSurf$ be the corresponding $\Cinfty$ diffeomorphism without fixed points generating the group $\bZ_2$ of covering transformations, that is $\xi^2=\id_{\OSurf}$ and $p\circ\xi= p$.

Denote $\ofunc = \nfunc\circ p: \OSurf\to\Pman$, $\OSubMan = p^{-1}(\NSubMan)$, and $\NtX = p^{-1}(\NX)$.
Then $\ofunc\in\FSp{\OSurf}$, $\NSubMan\subset\NSurf$ is a compact $\ofunc$-adopted submanifold of $\OSurf$, and $\NtX$ is a $\ofunc$-regular neighborhood of $\OSubMan$.

Fix a Hamiltonian like vector field $\fld$ for $\ofunc$ on $\OSurf$ and let $\flow:\OSurf\times\bR\to\OSurf$ be the flow generated by $\fld$.

Let $\xi^{*}\fld = T\xi^{-1} \circ \fld\circ \xi:\OSurf \to T\OSurf$ be the vector field on $\OSurf$ induced by $\xi$ from $\fld$.
Then one can always assume, see~\cite[Lemma~5.1 (2)]{Maksymenko:AGAG:2006}, that $\fld$ is also \myemph{skew-symmetric} with respect to $\xi$ in the sense that $\xi^{*}\fld = -\fld$, whence
\begin{equation}\label{equ:skew_symm_flow}
\xi \circ \flow_t = \flow_{-t} \circ \xi
\end{equation}
for all $t\in\bR$.
Indeed, it is necessary to replace $\fld$ with $\frac{1}{2}(\fld + \xi^*\fld)$ and properly change it near critical points of $\func$ in order to preserve property~\ref{enum:hf:local_form} of Definition~\ref{def:ham_like_vf}.

\begin{sublemma}\label{lm:deform_in_stab}
{\rm cf. \cite[Lemma~4.14]{Maksymenko:AGAG:2006}, \cite[Lemma~7.1]{Maksymenko:DefFuncI:2014}.}
Let $\mathcal{A}\subset\Stabilizer{\ofunc}$ be a subset and $\gamma\colon\mathcal{A}\to \Cinfty(\OSubMan,\bR)$ be a continuous map such that
\begin{equation}\label{equ:gamma_shift_func}
\odif(x) = \flow(x, \gamma(\odif)(x))
\end{equation}
for all $\odif\in\mathcal{A}$ and $x\in\OSubMan$.
Then for any pair $\Uman\subset\OSubSurf$ of $\ofunc$-regular neighborhoods of $\OSubMan$ such that $\overline{\Uman}\subset \Int{\OSubSurf}$, there exists a continuous map $\beta\colon\mathcal{A}\to \DFunc\subset \Cinfty(\OSurf,\bR)$, see~\eqref{equ:shift_funcs_for_diff}, satisfying the following conditions.
\begin{enumerate}[leftmargin=*, wide, label={\rm(\arabic*)}, itemsep=0.6ex]
\item\label{enum:deform_in_stab:beta_ext_gamma}
For each $\odif\in\mathcal{A}$ the function $\beta(\odif)$ extends $\gamma(\odif)$ to all $\OSurf$, satisfies relation~\eqref{equ:gamma_shift_func} on $\Uman$, and vanishes on $\overline{\OSurf\setminus\OSubSurf}$, that is
\begin{itemize}[leftmargin=8ex, itemsep=0.8ex]
    \item $\beta(\odif) = \gamma(\odif)$ on $\OSubMan$,
    \item $\odif(x)=\flow(x, \beta(\odif)(x))$ for all $x\in\Uman$,
    \item $\beta(\odif)=0$ on $\overline{\OSurf\setminus\OSubSurf}$.
\end{itemize}

\item\label{enum:deform_in_stab:idW_has_zero_beta}
If $\gamma(\odif)=0$ and $\odif$ is fixed on some $\ofunc$-regular neighborhood $\Uman' \subset \Uman$, then $\beta(\odif)\equiv0$ on $\Uman'$ as well.

\item\label{enum:deform_in_stab:homotopy}
The homotopy $H\colon \mathcal{A} \times I\to\Stabilizer{\ofunc}$ defined by
\[ H(\odif,t)=(\flow_{t\beta(\odif)})^{-1}\circ \odif \]
has the following properties:
\begin{enumerate}[leftmargin=8ex, label={\rm(\alph*)}, itemsep=0.6ex, topsep=1ex]
\item\label{enum:deform_in_stab:homotopy:a}
$H_0=\id_{\mathcal{A}}$ and $H_1(\mathcal{A})\subset\Stabilizer{\ofunc,\Uman}$, so it deforms $\mathcal{A}$ in $\Stabilizer{\ofunc}$ into $\Stabilizer{\ofunc,\Uman}$;

\item\label{enum:deform_in_stab:homotopy:b}
if $\gamma(\odif)\equiv0$ and $\odif$ is fixed on some $\ofunc$-regular neighborhood $\Uman' \subset \Uman$, then $H_t(\odif)$ is fixed on $\Uman'$ for all $t\in[0,1]$ as well.

\end{enumerate}
\smallskip
\end{enumerate}

Suppose in addition that $\fld$ is skew-symmetric, that is $\xi^{*}\fld =-\fld$, and either of the following conditions hold:
\begin{enumerate}[label={\rm(\roman*)}, itemsep=0.6ex]
\item\label{enum:skewsym:saddles}
every connected component of $\OSubMan$ contains a  critical point of $\ofunc$ being not a \myemph{non-degenerate local extreme};

\item\label{enum:skewsym:gamma_is_zero}
$\gamma(\odif)\equiv0$ for all $\odif\in\mathcal{A}\cap\tStabilizer{\ofunc}$.
\end{enumerate}
Then one can assume that
\begin{enumerate}[label={\rm(\arabic*)}, itemsep=1ex, start=4]
\item\label{enum:deform_in_stab:beta_skew_symm}
$\beta(\odif)\circ\xi=-\beta(\odif)$ for each $\odif\in\mathcal{A}\cap\tStabilizer{\ofunc}$;

\item\label{enum:deform_in_stab:skew_symm_Hinv}
$H\bigl( (\mathcal{A}\cap\tStabilizer{\ofunc})\times I \bigr)\subset\tStabilizer{\ofunc}$, that is the set of $\xi$-symmetric diffeomorphisms remains invariant with respect to the homotopy $H$.
\end{enumerate}

\end{sublemma}
\begin{proof}
Statements~\ref{enum:deform_in_stab:beta_ext_gamma}-\ref{enum:deform_in_stab:homotopy} constitute~\cite[Lemma~7.1]{Maksymenko:DefFuncI:2014}.
So we should verify statements~\ref{enum:deform_in_stab:beta_skew_symm} and~\ref{enum:deform_in_stab:skew_symm_Hinv} concerning skew-symmetric diffeomorphisms.

Let us briefly recall the idea of proof.
Since $\NtX$ is a $\ofunc$-regular neighborhood of $\OSubMan$, for each $\odif\in\mathcal{A}$ the function $\gamma(\odif)\colon\OSubMan\to\bR$ uniquely extends to a $\Cinfty$ function $\cov{\gamma}(\odif)\colon\OSubSurf\to\bR$ such that~\eqref{equ:gamma_shift_func} holds on $\OSubSurf$, that is $\odif(x)=\flow(x,\cov{\gamma}(\odif)(x))$ for all $x\in\OSubSurf$.
Moreover, the correspondence $\odif\to \,\cov{\gamma}(\odif)$ is a continuous map $\,\cov{\gamma}\colon\mathcal{A}\to \Cinfty(\OSubSurf,\bR)$.

Fix a $\Cinfty$ function $\mu\colon \OSurf\to [0,1]$ with the following properties:
\begin{itemize}
\item $\mu=0$ on some neighborhood of $\overline{\OSurf\setminus\OSubSurf}$;
\item $\mu=1$ on some neighborhood of $\overline{\Uman}$;
\item $\fld(\mu)=0$, that is $\mu$ take constant values along orbits of $\fld$.
\end{itemize}
Then the required map $\beta\colon\mathcal{A}\to \Cinfty(\OSurf,\bR)$ can be defined by
\begin{equation}\label{equ:beta_map}
\beta(\odif)(x)=
\begin{cases}
\cov{\gamma}(\odif)(x)\cdot \mu(x), &\text{for} \ x\in \OSubSurf,\\
0, & \text{for} \ x\in\OSurf\setminus\OSubSurf.
\end{cases}
\end{equation}

\medskip

Suppose now that $\fld$ is skew-symmetric with respect to $\xi$.
We will show below that in this case
\begin{equation}\label{equ:-g_xi__g}
  -\cov{\gamma}(\odif)\circ\xi = \cov{\gamma}(\odif),
\end{equation}
for all $\odif\in\mathcal{A}\cap\tStabilizer{\ofunc}$.

Assuming that~\eqref{equ:-g_xi__g} holds let us complete the proof of Lemma~\ref{lm:deform_in_stab}.
Since $\Uman$ and $\OSubSurf$ are invariant with respect to $\xi$, and $\xi$ maps orbits of $\fld$ onto orbits, one can replace the function $\mu$ with $\frac{1}{2}(\mu + \mu\circ \xi)$ not violating the above conditions on $\mu$ and thus additionally assume that
\begin{equation} \label{equ:mu_xi__mu}
  \mu\circ \xi = \mu.
\end{equation}
Now if we define $\beta$ by the same formula~\eqref{equ:beta_map} then conditions~\ref{enum:deform_in_stab:beta_skew_symm} and~\ref{enum:deform_in_stab:skew_symm_Hinv} will hold true.

\medskip

\ref{enum:deform_in_stab:beta_skew_symm}
If $\odif\in\mathcal{A}\cap\tStabilizer{\ofunc}$ and $x\in\OSubSurf$, then
\[
\beta(\odif)\circ\xi(x) =
 \cov{\gamma}(\odif)\circ\xi(x) \cdot \mu \circ\xi(x)
 \stackrel{\eqref{equ:-g_xi__g}, \, \eqref{equ:mu_xi__mu}}{=\!=\!=\!=\!=\!=\!=}
-\cov{\gamma}(\odif)(x) \cdot \mu(x) =
-\beta(\odif)(x).
\]
On the other hand, if $x\in\OSurf\setminus\OSubSurf$, then $\xi(x)\in\OSurf\setminus\OSubSurf$ as well, and so
\[
  \beta(\odif)(x) = \beta(\odif)\circ\xi(x) = 0.
\]

\medskip

\ref{enum:deform_in_stab:skew_symm_Hinv}
Notice that for each $\odif\in\mathcal{A}\cap\tStabilizer{\ofunc}$ and $t\in[0,1]$ we have that
\begin{align*}
  \flow_{t\beta(\odif)} \circ \xi(x) &=
  \flow_{t\beta(\odif)\circ\xi(x)} (\xi(x)) \stackrel{\eqref{equ:skew_symm_flow}}{=\!=\!=}
  \xi\circ \flow_{-t\beta(\odif)\circ\xi(x)}(x) \stackrel{\ref{enum:deform_in_stab:beta_skew_symm}}{=\!=} \\
  &=\xi\circ \flow_{t\beta(\odif)(x)}(x) =
  \xi\circ \flow_{t\beta(\odif)}(x).
\end{align*}
This means that the map $\flow_{t\beta(\odif)}$ belongs to $\tStabilizer{\ofunc}$, whence $H(\odif,t)=(\flow_{t\beta(\odif)})^{-1}\circ \odif \in \tStabilizer{\ofunc}$ as well.

\medskip

Thus it remains to prove~\eqref{equ:-g_xi__g}.
Let $\odif\in\mathcal{A} \cap \tStabilizer{\ofunc}$, so
\begin{align*}
  \odif\circ\xi&=\xi\circ\odif, &
  \odif(x) &= \flow(x,\cov{\gamma}(\odif)(x))
\end{align*}
for all $x\in\OSubSurf$.
Then
\begin{align*}
\odif\circ \xi(x) \ &=  \
\flow\bigr(\xi(x), \cov{\gamma}(\odif)\circ\xi(x) \bigr) \ = \
\flow_{\cov{\gamma}(\odif)\circ \xi(x)} \circ \xi(x) \
\stackrel{\eqref{equ:skew_symm_flow}}{=\!=\!=} \
\xi\circ \flow_{ -\cov{\gamma}(\odif)\circ\xi(x) }(x).
  \\
\xi\circ\odif(x) \ &= \
\xi \circ \flow\bigr(x, \cov{\gamma}(\odif)(x) \bigr) \ = \ \xi\circ \flow_{\cov{\gamma}(\odif)(x)} (x).
\end{align*}
Hence
\[
  \flow_{ -\cov{\gamma}(\odif)\circ\xi(x) }(x)
  = \flow_{ \cov{\gamma}(\odif)(x)} (x) = \odif(x)
\]
for all $x\in\OSubSurf$.
In other words, $-\cov{\gamma}(\odif)\circ\xi$ and $\cov{\gamma}(\odif)$ are shift functions for $\odif$ on $\OSubSurf$.

\ref{enum:skewsym:saddles}
Suppose that each connected component $\Yman$ of $\OSubMan$ contains either a degenerate local extreme or a saddle critical point of $\ofunc$.
Then the shift map on $\OSubSurf_{\Yman}$ is injective, that is any two shift functions for $\odif$ on $\OSubSurf_{\Yman}$ must coincide.
Hence $-\cov{\gamma}(\odif)\circ\xi$ and $\cov{\gamma}(\odif)$ coincide on all of $\OSubSurf$.

\ref{enum:skewsym:gamma_is_zero}
If $\gamma(\odif)\equiv0$ on all of $\OSubMan$, then $\gamma(\odif) = -\gamma(\odif)\circ\xi = 0$ on $\OSubMan$ as well since $\xi(\OSubMan)=\OSubMan$.
Moreover, as $\OSubSurf$ is a $\ofunc$-regular neighborhood of $\OSubMan$, $\OSubMan$ intersects interiors of all connected components of $\OSubSurf$.
It then follows from~\cite[Lemma~6.1(ii)]{Maksymenko:DefFuncI:2014} that $-\cov{\gamma}(\odif)\circ\xi$ and $\cov{\gamma}(\odif)$ coincide on all of $\OSubSurf$.
Lemma~\ref{lm:deform_in_stab} is completed.
\end{proof}

\medskip

Now we can  prove that the inclusions~\eqref{equ:SfNX_SfnbX_SfX} are homotopy equivalences.
Due to the statement~\ref{enum:SfX_tStftX} of Lemma~\ref{lm:SttSt} we can identify groups in~\eqref{equ:SfNX_SfnbX_SfX} with their ``symmetric'' variants, and so it suffices to show that the following inclusions are homotopy equivalences:
\begin{equation}\label{equ:t_SfNX_SfnbX_SfX}
    \tStabilizer{\ofunc,\NtX} \ \subset \
    \tStabilizerNbh{\ofunc,\OSubMan} \ \subset \
    \tStabilizer{\ofunc,\OSubMan}.
\end{equation}

Let $\OSubSurf$ be any $\ofunc$-regular neighborhood of $\NtX$,  $\mathcal{A} = \tStabilizer{\ofunc,\OSubMan}$, and $\gamma:\mathcal{A} \to \Cinfty(\OSubMan,\bR)$ be a constant map into the zero function.
Then for each $x\in\OSubMan$ and $\odif\in\mathcal{A}$ we have that
\[
\flow(x,\gamma(\odif)(x)) = \flow(x,0) = x = \odif(x).
\]
Hence by~\ref{enum:deform_in_stab:skew_symm_Hinv} of Lemma~\ref{lm:deform_in_stab} there exists a homotopy $H:\mathcal{A}\times I \to \Stabilizer{\ofunc}$ such that
\begin{itemize}
\item
$H_0=\id_{\mathcal{A}}$ \  and \ $H_1(\mathcal{A}) \subset \tStabilizer{\ofunc,\OSubSurf}$;
\item
if $\odif\in\mathcal{A}$ is fixed on some $\ofunc$-regular neighborhood of $\OSubMan$ contained in $\NtX$, then so is $H_t(\odif)$ for all $t\in[0,1]$;
\end{itemize}
In other words, $H$ is a deformation of $\mathcal{A} = \tStabilizer{\ofunc,\OSubMan}$ into $\tStabilizer{\ofunc,\OSubSurf}$ which leaves invariant $\tStabilizer{\ofunc,\NtX}$, and $\tStabilizerNbh{\ofunc,\OSubMan}$.
Hence the inclusions~\eqref{equ:t_SfNX_SfnbX_SfX} and therefore~\eqref{equ:SfNX_SfnbX_SfX} are homotopy equivalences.
\end{proof}

\subsection*{Simplification of diffeomorphisms preserving a function by isotopy}
Let $\NSurf$ be a non-orientable compact connected surface, $p:\OSurf\to\NSurf$ be the orientable double covering, and $\xi:\OSurf\to\OSurf$ be an involution without fixed points generating the group $\bZ_2$ of covering transformations.
Let also $\nfunc\in\FSp{\NSurf}$ and $\ofunc = \nfunc\circ p \in \FSp{\OSurf}$.
Since $\OSurf$ is orientable, one can construct a skew-symmetric Hamiltonian like flow $\flow$ on $\OSurf$ for $\ofunc$.

Let $\NSubMan\subset\NSurf$ be a connected $\nfunc$-adopted subsurface and $\NSubSurf\subset\NSurf$ be an $\nfunc$-adopted submanifold.
Denote $\OSubMan = p^{-1}(\NSubMan)$ and $\OSubSurf = p^{-1}(\NSubSurf)$.

Let also $\Stabilizer{\ofunc,\OSubSurf;\OSubMan}$ the subset of $\Stabilizer{\ofunc,\OSubSurf}$ consisting of diffeomorphisms $\odif$ admitting a $\Cinfty$ function $\alpha_{\odif}:\OSubMan\to\bR$ with the following properties:
\begin{enumerate}[itemsep=1ex]
\item
$\odif(x) = \flow(x,\alpha_{\odif}(x))$ for all $x\in\Xman$;
\item
$\alpha_{\odif}=0$ on $\OSubMan\cap\OSubSurf$.
\end{enumerate}

Denote
\[
    \tStabilizer{\ofunc,\OSubSurf;\OSubMan}:= \Stabilizer{\ofunc,\OSubSurf;\OSubMan} \cap \tDiff(\OSurf).
\]
Evidently, we have the following inclusion:
\begin{equation}\label{equ:}
\tStabilizer{\ofunc,\OSubSurf\cup\OSubMan} \ \subset \ \tStabilizer{\ofunc,\OSubSurf; \OSubMan},
\end{equation}
since for each $\odif\in\tStabilizer{\ofunc,\OSubSurf\cup\OSubMan}$ one can set $\alpha_{\odif}\equiv 0$ on $\OSubMan$.

Using isomorphism $s$ from~\eqref{equ:s_iso_Stabs} put
\begin{align*}
    \Stabilizer{\nfunc,\NSubSurf;\NSubMan} & :=  s^{-1}\bigl( \tStabilizer{\ofunc,\OSubSurf;\OSubMan} \bigr), &
    \FolStabilizer{\nfunc,\NSubSurf;\NSubMan} & := \FolStabilizer{\nfunc} \cap \Stabilizer{\nfunc,\NSubSurf;\NSubMan}.
\end{align*}

Then we obviously have the following inclusions:
\begin{align}\label{equ:inclusions_DfVX_SfVX}
    \FolStabilizer{\nfunc,\NSubSurf\cup\NSubMan} &\subset \FolStabilizer{\nfunc,\NSubSurf;\NSubMan},
    &
    \Stabilizer{\nfunc,\NSubSurf\cup\NSubMan} &\subset \Stabilizer{\nfunc,\NSubSurf;\NSubMan}
\end{align}

\begin{corollary}\label{cor:SX_Sidf}{\rm cf.~\cite[Corollary~7.3]{Maksymenko:DefFuncI:2014}.}
Let $\NSurf$ be a compact surface (orientable or not), $\nfunc\in\FSp{\NSurf}$, $\NSubSurf$ be an $\nfunc$-adapted submanifold, and $\NSubMan$ be a connected $\nfunc$-adapted subsurface containing at least one saddle critical point of $\nfunc$.
Then the inclusions~\eqref{equ:inclusions_DfVX_SfVX} are homotopy equivalences.
\end{corollary}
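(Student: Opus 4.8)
The plan is to reduce to the oriented double covering and then apply the ``shift\nobreakdash-function'' deformation of Lemma~\ref{lm:deform_in_stab}, following the pattern of the proof of Theorem~\ref{th:hom_equ}. If $\NSurf$ is orientable the statement is \cite[Corollary~7.3]{Maksymenko:DefFuncI:2014}, so I would assume $\NSurf$ non\nobreakdash-orientable, let $p\colon\OSurf\to\NSurf$ be its oriented double covering with deck involution $\xi$, put $\ofunc=\nfunc\circ p\in\FSp{\OSurf}$, $\OSubMan=p^{-1}(\NSubMan)$, $\OSubSurf=p^{-1}(\NSubSurf)$, and fix a skew\nobreakdash-symmetric Hamiltonian like flow $\flow$ on $\OSurf$ for $\ofunc$, so that~\eqref{equ:skew_symm_flow} holds. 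By Lemma~\ref{lm:SttSt}\ref{enum:SfX_tStftX} together with the definition of $\Stabilizer{\nfunc,\NSubSurf;\NSubMan}$, the isomorphism $s$ identifies the second inclusion in~\eqref{equ:inclusions_DfVX_SfVX} with
\[ \tStabilizer{\ofunc,\OSubSurf\cup\OSubMan}\ \subset\ \tStabilizer{\ofunc,\OSubSurf;\OSubMan}=:\mathcal{A}; \]
the first inclusion will be recovered afterwards by transporting the deformation back to $\NSurf$. So I would first prove that $\tStabilizer{\ofunc,\OSubSurf\cup\OSubMan}\subset\mathcal{A}$ is a homotopy equivalence.

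The saddle hypothesis enters at the next step. Since $\NSubMan$ is connected and contains a saddle $z$ of $\nfunc$, and $p$ is a local diffeomorphism under which each path component of $\OSubMan$ surjects onto $\NSubMan$ (Lemma~\ref{lm:cov:path_comp_of_preimage}), every connected component of $\OSubMan$ contains a point of $p^{-1}(z)$, which is a saddle of $\ofunc$. Hence condition~\ref{enum:skewsym:saddles} of Lemma~\ref{lm:deform_in_stab} is verified, and, moreover, the shift map along the orbits of $\flow$ is \emph{injective} on every $\ofunc$\nobreakdash-regular neighbourhood of $\OSubMan$. Consequently, for each $\odif\in\mathcal{A}$ the shift function $\alpha_{\odif}\colon\OSubMan\to\bR$ with $\odif(x)=\flow(x,\alpha_{\odif}(x))$ on $\OSubMan$ and $\alpha_{\odif}\equiv0$ on $\OSubMan\cap\OSubSurf$ is uniquely determined, and the resulting map $\gamma\colon\mathcal{A}\to\Cinfty(\OSubMan,\bR)$, $\odif\mapsto\alpha_{\odif}$, is continuous (argued exactly as in~\cite{Maksymenko:DefFuncI:2014}); by construction $\gamma$ satisfies~\eqref{equ:gamma_shift_func}, so Lemma~\ref{lm:deform_in_stab} applies to $\mathcal{A}$ and $\gamma$.

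Next I would choose a pair $\Uman\subset\Uman'$ of $\ofunc$\nobreakdash-regular neighbourhoods of $\OSubMan$ with $\overline{\Uman}\subset\Int{\Uman'}$ and with $\Uman'$ thin enough that $\OSubSurf\cap\Uman'\subset\OSubMan$ (possible because, $\OSubMan$ being closed and $\nfunc$\nobreakdash-adopted, each connected component of $\OSubSurf$ either lies in $\OSubMan$ or meets $\OSubMan$ only along shared boundary level curves, hence is disjoint from a sufficiently thin collar of $\OSubMan$). Lemma~\ref{lm:deform_in_stab} then yields $\beta\colon\mathcal{A}\to\DFunc$ and the homotopy $H(\odif,t)=(\flow_{t\beta(\odif)})^{-1}\circ\odif$ with $H_0=\id_{\mathcal{A}}$, $H_1(\mathcal{A})\subset\tStabilizer{\ofunc,\Uman}\subset\tStabilizer{\ofunc,\OSubMan}$, and, by~\ref{enum:deform_in_stab:beta_skew_symm}--\ref{enum:deform_in_stab:skew_symm_Hinv}, $H(\mathcal{A}\times I)\subset\tStabilizer{\ofunc}$. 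By part~\ref{enum:deform_in_stab:beta_ext_gamma} of the lemma $\beta(\odif)$ vanishes on $\overline{\OSurf\setminus\Uman'}$ and equals $\gamma(\odif)$ on $\OSubMan$; using $\OSubSurf\cap\Uman'\subset\OSubMan\cap\OSubSurf$ and $\gamma(\odif)|_{\OSubMan\cap\OSubSurf}=0$, it follows that $\beta(\odif)\equiv0$ on all of $\OSubSurf$, so each $\flow_{t\beta(\odif)}$ is fixed on $\OSubSurf$ and $H_t(\odif)\in\tStabilizer{\ofunc,\OSubSurf}$. Since on $\Uman\supset\OSubMan$ both $\odif$ and $\flow_{t\beta(\odif)}$ preserve every orbit of $\flow$, a short check shows that $H$ is a homotopy of $\mathcal{A}=\tStabilizer{\ofunc,\OSubSurf;\OSubMan}$ into itself with $H_1(\mathcal{A})\subset\tStabilizer{\ofunc,\OSubSurf\cup\OSubMan}$; and if $\odif$ already belongs to $\tStabilizer{\ofunc,\OSubSurf\cup\OSubMan}$ then $\gamma(\odif)\equiv0$, whence $\beta(\odif)|_{\OSubMan}=0$ and $H$ preserves $\tStabilizer{\ofunc,\OSubSurf\cup\OSubMan}$ as well. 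Thus $H_1$ is a homotopy inverse of the inclusion $\tStabilizer{\ofunc,\OSubSurf\cup\OSubMan}\subset\mathcal{A}$, which is therefore a homotopy equivalence.

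For the inclusion $\FolStabilizer{\nfunc,\NSubSurf\cup\NSubMan}\subset\FolStabilizer{\nfunc,\NSubSurf;\NSubMan}$ I would transport $H$ back to $\NSurf$ via $s$, obtaining a homotopy $\widetilde H$ of $\Stabilizer{\nfunc,\NSubSurf;\NSubMan}$ of the form $\widetilde H_t(\ndif)=g_t(\ndif)^{-1}\circ\ndif$, where $g_t(\ndif)=\rho(\flow_{t\beta(s(\ndif))})$ lies in $\StabilizerId{\nfunc}$ (the flow shifts $\flow_{u\beta(s(\ndif))}$, $u\in[0,1]$, are $\xi$\nobreakdash-symmetric, hence descend via $\rho$, and form a path in $\Stabilizer{\nfunc}$ from $\id_{\NSurf}$). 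Since $\StabilizerId{\nfunc}\subset\FolStabilizer{\nfunc}$ and $\FolStabilizer{\nfunc}$ is a normal subgroup of $\Stabilizer{\nfunc}$ (Lemma~\ref{lm:Lf_Sf}), the homotopy $\widetilde H$ maps $\FolStabilizer{\nfunc,\NSubSurf;\NSubMan}$ into itself, $\widetilde H_1$ maps it into $\FolStabilizer{\nfunc,\NSubSurf\cup\NSubMan}$, and $\widetilde H$ preserves $\FolStabilizer{\nfunc,\NSubSurf\cup\NSubMan}$; hence $\widetilde H_1$ is a homotopy inverse of that inclusion too. The step I expect to be the main obstacle is the one in the second paragraph: proving that $\alpha_{\odif}$ is well defined and depends \emph{continuously} on $\odif$. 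This is exactly where the hypothesis that $\NSubMan$ carries a saddle is indispensable — it provides the injectivity of the shift map near $\OSubMan$ needed for well\nobreakdash-definedness and continuity, and simultaneously verifies condition~\ref{enum:skewsym:saddles} of Lemma~\ref{lm:deform_in_stab}, without which the deformation could not be kept $\xi$\nobreakdash-symmetric. The remaining items — arranging $\Uman'$ thin enough that $\beta(\odif)|_{\OSubSurf}=0$, and checking the invariance statements for $H$ and $\widetilde H$ — are routine once Lemma~\ref{lm:deform_in_stab} is in hand.
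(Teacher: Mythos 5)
Your proposal follows essentially the same route as the paper's proof: pass to the oriented double cover, pick a skew-symmetric Hamiltonian-like flow, use the saddle hypothesis to verify condition~\ref{enum:skewsym:saddles} of Lemma~\ref{lm:deform_in_stab} so that the shift-function deformation stays $\xi$-symmetric, and descend via $s$. One stylistic difference: the paper disposes of the first inclusion in~\eqref{equ:inclusions_DfVX_SfVX} by noting that $\FolStabilizer{\nfunc}$ is a union of path components of $\Stabilizer{\nfunc}$ (Lemma~\ref{lm:Lf_Sf}), so it suffices to treat only the second inclusion; you instead transport the explicit homotopy and appeal to $\StabilizerId{\nfunc}\subset\FolStabilizer{\nfunc}$. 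Both work, though your appeal to normality is unnecessary — being a subgroup containing each $g_t(\ndif)$ already suffices.

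One slip worth flagging: the parenthetical claim that $\Uman'$ can be chosen thin enough to have $\OSubSurf\cap\Uman'\subset\OSubMan$ fails whenever a $2$-dimensional component of $\OSubSurf$ shares a boundary level curve $\gamma$ with $\OSubMan$ (or overlaps $\OSubMan$ in a proper $2$-dimensional piece): a regular neighbourhood of $\OSubMan$ necessarily contains a two-sided collar of $\gamma$, hence meets $\OSubSurf$ outside $\OSubMan$. The desired conclusion $\beta(\odif)\equiv 0$ on $\OSubSurf$ is nonetheless true, but for a different reason: on $\OSubSurf\cap\Uman'$ the diffeomorphism $\odif$ is the identity, so the extended shift function $\cov{\gamma}(\odif)$ must on each orbit be an integer multiple of its period; it vanishes on $\OSubMan\cap\OSubSurf$ by hypothesis, and since periods of orbits near a regular level curve are bounded away from $0$, continuity forces $\cov{\gamma}(\odif)\equiv 0$ on all of $\OSubSurf\cap\Uman'$ once $\Uman'$ is thin. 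With this repaired, your argument is complete and matches the paper's — which itself defers these analytic details to~\cite[Cor.~7.3]{Maksymenko:DefFuncI:2014}.
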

\begin{proof}
Since $\FolStabilizer{\nfunc}$ consists of path components of $\Stabilizer{\func}$, it suffices to show only that the second inclusion is a homotopy equivalence.

For orientable $\NSurf$ this statement coincides with~\cite[Corollary~7.3]{Maksymenko:DefFuncI:2014}.
Let us briefly recall the main steps of its proof.
Since $\NSubMan$ is connected and contains saddle critical points of $\nfunc$, one can show that for every $\ndif\in\Stabilizer{\nfunc,\NSubSurf;\NSubMan}$ the function $\alpha_{\ndif}$ is unique and the correspondence $\ndif\mapsto\alpha_{\ndif}$ is a continuous map $\gamma:\Stabilizer{\nfunc,\NSubSurf;\NSubMan} \to \Cinfty(\NSubMan,\bR)$.
Then a deformation of $\Stabilizer{\nfunc,\NSubSurf;\NSubMan}$ into $\Stabilizer{\nfunc,\NSubSurf\cup\NSubMan}$ can be deduced from \cite[Lemma~7.1]{Maksymenko:DefFuncI:2014} which is the same as Lemma~\ref{lm:deform_in_stab}.

Suppose $\NSurf$ is non-orientable.
Let $p:\OSurf\to\NSurf$ be the orientable double covering, and $\xi:\OSurf\to\OSurf$ be an involution without fixed points generating the group $\bZ_2$ of covering transformations, $\ofunc = \nfunc\circ p \in \FSp{\OSurf}$, and $\fld$ be a skew-symmetric Hamiltonian like vector field for $\ofunc$ on $\OSurf$.
Denote $\OSubMan = p^{-1}(\NSubMan)$ and $\OSubSurf = p^{-1}(\NSubSurf)$.
Then, by the orientable case, the inclusion $\Stabilizer{\ofunc,\OSubSurf\cup\OSubMan} \subset \Stabilizer{\ofunc,\OSubSurf;\OSubMan}$ is a homotopy equivalence.
Moreover, by~\ref{enum:skewsym:saddles} and~\ref{enum:skewsym:gamma_is_zero} of Lemma~\ref{lm:deform_in_stab} the deformation of $\Stabilizer{\ofunc,\OSubSurf;\OSubMan}$ to $\Stabilizer{\ofunc,\OSubSurf\cup\OSubMan}$ preserves $\xi$-symmetric diffeomorphisms, which implies that the inclusion
$\tStabilizer{\ofunc,\OSubSurf\cup\OSubMan} \subset \tStabilizer{\ofunc,\OSubSurf;\OSubMan}$ is a homotopy equivalence as well.

Finally, we have isomorphisms of topological groups~\eqref{equ:s_iso_Stabs}:
\begin{align*}
    &s:\Stabilizer{\nfunc,\NSubSurf\cup\NSubMan} \to \tStabilizer{\ofunc,\OSubSurf\cup\OSubMan}, &
    &s:\Stabilizer{\nfunc,\NSubSurf;\NSubMan} \to \tStabilizer{\ofunc,\OSubSurf;\OSubMan},
\end{align*}
whence the inclusion $\Stabilizer{\nfunc,\NSubSurf\cup\NSubMan} \subset \Stabilizer{\nfunc,\NSubSurf;\NSubMan}$ is a homotopy equivalence as well.
\end{proof}

\section{Functions on annulus}\label{sect:func_on_annulus}
\begin{lemma}\label{lm:cyl:rel_StStIsotId}
Let $\Cyl = S^1\times[0,1]$ and $\func\in\FSp{\Cyl}$.
Then we have the following commutative diagram
\begin{equation}\label{equ:iso:i0StfdC}
\begin{aligned}
\xymatrix{
    0 \ar[r] & \pi_0\FolStabilizer{\func,\partial\Cyl} \ar[rr]^-{j} \ar[d]_-{\cong} &&
    \pi_0\Stabilizer{\nfunc,\partial\Cyl} \ar[d]^-{\psi}_-{\cong}  \ar[r]  &
    \GrpKR{\nfunc,\partial\Cyl} \ar[d]_-{\cong} \ar[r] & 0 \\
    0 \ar[r] & \bZ \times \pi_0\FolStabilizerIsotId{\func,\partial\Cyl} \ar@{^(->}[rr]^-{\id_{\bZ} \times j} &&
    \bZ \times \pi_0\StabilizerIsotId{\func,\partial\Cyl} \ar[r] &
    \GrpKRIsotId{\nfunc,\partial\Cyl} \ar[r] & 0
}
\end{aligned}
\end{equation}
in which $j$ is induced by the inclusion $\FolStabilizer{\func,\partial\Cyl} \subset\Stabilizer{\func,\partial\Cyl}$, the rows are exact, and vertical arrows are isomorphisms.
\end{lemma}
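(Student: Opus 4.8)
The plan is to build the whole diagram around one $\bZ$-factor, the class of the Dehn twist of the annulus, which will lie in $\FolStabilizer{\func,\partial\Cyl}$; once that factor is split off compatibly from the various $\pi_0$'s, everything else is formal.

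First, the exactness of the rows. The top row is exact by the very definition $\GrpKR{\func,\partial\Cyl}=\pi_0\Stabilizer{\func,\partial\Cyl}/\pi_0\FolStabilizer{\func,\partial\Cyl}$, the injectivity of $j$ being a consequence of Lemma~\ref{lm:Lf_Sf} (that $\FolStabilizer{\func,\partial\Cyl}$ is a union of path components of $\Stabilizer{\func,\partial\Cyl}$). For the same reasons the sequence $0\to\pi_0\FolStabilizerIsotId{\func,\partial\Cyl}\xrightarrow{j}\pi_0\StabilizerIsotId{\func,\partial\Cyl}\to\GrpKRIsotId{\func,\partial\Cyl}\to0$ is exact, and the bottom row of~\eqref{equ:iso:i0StfdC} is obtained from it by taking the direct product with $\bZ$ on the first two terms (with the identity on the $\bZ$-summand), which leaves the cokernel unchanged. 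Therefore it is enough to produce the left and middle vertical isomorphisms so that the left square commutes; the right vertical map is then induced on cokernels and is automatically an isomorphism by the five lemma.

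Next, the $\bZ$-factor and the middle isomorphism $\psi$. The inclusion $\Stabilizer{\func,\partial\Cyl}\subset\Diff(\Cyl,\partial\Cyl)$ induces $\pi\colon\pi_0\Stabilizer{\func,\partial\Cyl}\to\pi_0\Diff(\Cyl,\partial\Cyl)$; the latter group is infinite cyclic generated by a Dehn twist, and $\DiffId(\Cyl,\partial\Cyl)$ is contractible (classical facts for the annulus rel boundary). Since $\StabilizerIsotId{\func,\partial\Cyl}=\Stabilizer{\func}\cap\DiffId(\Cyl,\partial\Cyl)$ is a union of path components of $\Stabilizer{\func,\partial\Cyl}$, the natural map $\pi_0\StabilizerIsotId{\func,\partial\Cyl}\to\pi_0\Stabilizer{\func,\partial\Cyl}$ is injective with image exactly $\ker\pi$. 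To see that $\pi$ is onto and splits, I realize the Dehn twist inside $\FolStabilizer{\func,\partial\Cyl}$: fix a Hamiltonian like flow $\flow$ for $\func$ and, near a boundary component, a collar made of regular level curves (it exists because $\func$ has no critical points on $\partial\Cyl$); on this collar a suitable shift $\flow_\lambda$ along $\flow$, with $\lambda$ running from $0$ to one period of the orbits, equals the identity near both edges of the collar, so it glues with the identity outside to a diffeomorphism $\tau$ that is fixed on $\partial\Cyl$, preserves $\func$, is a single Dehn twist, and lies in $\FolStabilizer{\func,\partial\Cyl}$ (shifts leave every level-set component invariant and have identity tangent map at degenerate extrema). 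Then $\pi([\tau])$ generates $\bZ$ and $n\mapsto\tau^n$ is a section, so $1\to\pi_0\StabilizerIsotId{\func,\partial\Cyl}\to\pi_0\Stabilizer{\func,\partial\Cyl}\xrightarrow{\pi}\bZ\to1$ splits.

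The main obstacle is to upgrade this split extension to a direct product, i.e. to show that $[\tau]$ is central in $\pi_0\Stabilizer{\func,\partial\Cyl}$. For this I would pass to the homotopy equivalent group $\StabilizerNbh{\func,\partial\Cyl}$ and use (the collar version of) Theorem~\ref{th:hom_equ} to arrange that every path component is represented by a diffeomorphism which is the identity on a \emph{prescribed} collar of $\partial\Cyl$; choosing $\tau$ with support a compact subannulus strictly inside that collar, such a representative and $\tau$ have disjoint supports and hence commute, which gives centrality of $[\tau]$. This is the only point that is not bookkeeping. Granting it, $\psi$ is the resulting isomorphism $\pi_0\Stabilizer{\func,\partial\Cyl}\xrightarrow{\cong}\bZ\times\pi_0\StabilizerIsotId{\func,\partial\Cyl}$, sending $[\tau]\mapsto(1,0)$ and the subgroup $\pi_0\StabilizerIsotId{\func,\partial\Cyl}$ identically onto $\{0\}\times\pi_0\StabilizerIsotId{\func,\partial\Cyl}$. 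Running the same argument inside $\FolStabilizer{\func,\partial\Cyl}$ — legitimate since $\tau\in\FolStabilizer{\func,\partial\Cyl}$ and $\FolStabilizerIsotId{\func,\partial\Cyl}$ is a union of path components of $\FolStabilizer{\func,\partial\Cyl}$ — yields the left vertical isomorphism with the same generator $[\tau]$, so the left square of~\eqref{equ:iso:i0StfdC} commutes; passing to cokernels then gives the right vertical isomorphism and the commutativity of the right square, finishing the proof.
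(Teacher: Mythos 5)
Your proposal is correct and essentially reproduces the paper's argument: both construct the Dehn twist $\tau$ supported in an $\func$-regular collar $\Uman$ of $\partial\Cyl$, obtain the split sequence $1\to\pi_0\StabilizerIsotId{\func,\partial\Cyl}\to\pi_0\Stabilizer{\func,\partial\Cyl}\to\bZ\to1$ from $\pi_0\Diff(\Cyl,\partial\Cyl)\cong\bZ$, and then use the homotopy equivalence $\Stabilizer{\func,\Uman}\hookrightarrow\Stabilizer{\func,\partial\Cyl}$ (Theorem~\ref{th:hom_equ}, i.e.~\eqref{equ:SfNX_SfnbX_SfX}) to represent every class by a diffeomorphism fixed on all of $\Uman$, so that it commutes with $\tau$ and the split extension becomes a direct product, yielding $\psi([\dif])=\bigl(\beta(\dif),[\dif\circ\tau^{-\beta(\dif)}]\bigr)$. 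The only surface difference is that you spell out the flow-shift construction of $\tau$ and its membership in $\FolStabilizer{\func,\partial\Cyl}$ explicitly where the paper cites \cite[\S6]{Maksymenko:AGAG:2006}, and you invoke the five lemma on the right column where the paper leaves it to the reader; both are routine.
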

\begin{proof}
Let $\Uman$ be an $\func$-regular neighborhood of $\partial\Cyl$.
Then one can construct a Dehn twist $\tau:\Cyl\to\Cyl$ along $S^1\times0$ supported in $\Uman$ and preserving $\func$, that is $\tau\in\Stabilizer{\func,\partial\Cyl}$, see e.g.~\cite[\S6]{Maksymenko:AGAG:2006}.

It is well known that the mapping class group $\pi_0\Diff(\Cyl,\partial\Cyl)$ is freely generated by the isotopy class of $\tau$, so we have the following sequence of homomorphisms
\[
  \alpha\colon
   \Diff(\Cyl,\partial\Cyl) \xrightarrow{~~~~}
   \frac{\Diff(\Cyl,\partial\Cyl)}{\DiffId(\Cyl,\partial\Cyl)} \equiv \pi_0\Diff(\Cyl,\partial\Cyl)
   \xrightarrow{~~\cong~~} \bZ,
\]
where the first arrow is a natural homomorphism into the mapping class group of $\Cyl$ rel.~$\partial\Cyl$ associating to each $\dif\in\Diff(\Cyl,\partial\Cyl)$ its isotopy class, and the last arrow is an isomorphism.
One can also assume that $q(\tau) = 1$.
Hence the restriction of $\alpha$ to $\Stabilizer{\func,\partial\Cyl}$:
\[\beta = \alpha|_{ \Stabilizer{\func,\partial\Cyl}}: \Stabilizer{\func,\partial\Cyl} \to \bZ\]
is surjective.
Moreover,
\[
\ker(\beta) = \Stabilizer{\func,\partial\Cyl} \cap \ker(\alpha) =
\Stabilizer{\func,\partial\Cyl} \cap \DiffId(\Cyl,\partial\Cyl) =:
\StabilizerIsotId{\func,\partial\Cyl}.
\]
Hence we have the following short exact sequence:
\begin{equation}\label{equ:short_seq_cyl}
1 \to \StabilizerIsotId{\func,\partial\Cyl}
\to  \Stabilizer{\func,\partial\Cyl} \xrightarrow{~~\beta~~} \bZ \to 1,
\end{equation}
and $\beta$ admits a right inverse $\sigma:\bZ\to\Stabilizer{\func,\partial\Cyl}$ defined by $\sigma(k) = \tau^k$, i.e. $\beta\circ\sigma=\id_{\bZ}$.

Due to~\eqref{equ:SfNX_SfnbX_SfX} there is an isomorphism induced by the natural inclusion:
\[
   \pi_0\StabilizerIsotId{\func,\Uman}  \cong \pi_0 \StabilizerIsotId{\func,\partial\Cyl},
\]
whence~\eqref{equ:short_seq_cyl} reduces to the following exact sequence:
\[
1 \to \pi_0\StabilizerIsotId{\func,\Uman}
\to  \pi_0\Stabilizer{\func,\partial\Cyl} \xrightarrow{~~\hat{\beta}~~} \bZ \to 1,
\]
in which $\hat{\beta}$ admits a right inverse $\hat{\sigma}:\bZ\to\pi_0\Stabilizer{\func,\partial\Cyl}$ given by $\hat{\sigma}(k) = [\tau]^k$, $k\in\bZ$.

Since $\tau$ is supported in $\Uman$, it follows that $\tau$ commutes with each $\dif\in\StabilizerIsotId{\func,\Uman}$.
Therefore the subgroups $\pi_0\StabilizerIsotId{\func,\Uman}$ and $\langle [\tau] \rangle \cong\bZ$ mutually commute and generate all the group $\pi_0\Stabilizer{\func,\partial\Cyl}$.
Hence an isomorphism~\eqref{equ:iso:i0StfdC} can be defined by
\[
\psi([\dif]) = \bigl(\beta(\dif),  [\dif \circ \tau^{-\beta(\dif)}] \bigr),
\]
for $\dif\in\Stabilizer{\func,\partial\Cyl}$.

Regarding $\pi_0\FolStabilizer{\func,\partial\Cyl}$ as a subgroup of $\pi_0\Stabilizer{\func,\partial\Cyl}$, one easily checks that $\psi$ maps $\pi_0\FolStabilizer{\func,\partial\Cyl}$ onto $\bZ \times \pi_0\StabilizerIsotId{\func,\partial\Cyl}$, whence left and right vertical arrows in~\eqref{equ:iso:i0StfdC} are isomorphisms.
\end{proof}

\section{Proof of Theorem~\ref{th:pi0S_struct}}\label{sect:th:2}
By Theorem~\ref{th:unique_cr_level} there exists a unique a critical component $\CrComp$ of some level-set of $\nfunc$ such that if $\NSubSurf$ is an $\nfunc$-regular neighborhood of $\CrComp$, and $\YYi{0}, \YYi{1}, \dots, \YYi{n}$ are all the connected components of $\overline{\MBand\setminus\NSubSurf}$ enumerated so that $\partial\MBand\subset\YYi{0}$, then $\YYi{0}$ is an annulus $S^1\times[0,1]$, each $\YYi{k}$, $k=1,\ldots,n$, is a $2$-disk, see Figure~\ref{fig:func_mb_2crpt}.
\begin{figure}[htbp]
\includegraphics[height=3cm]{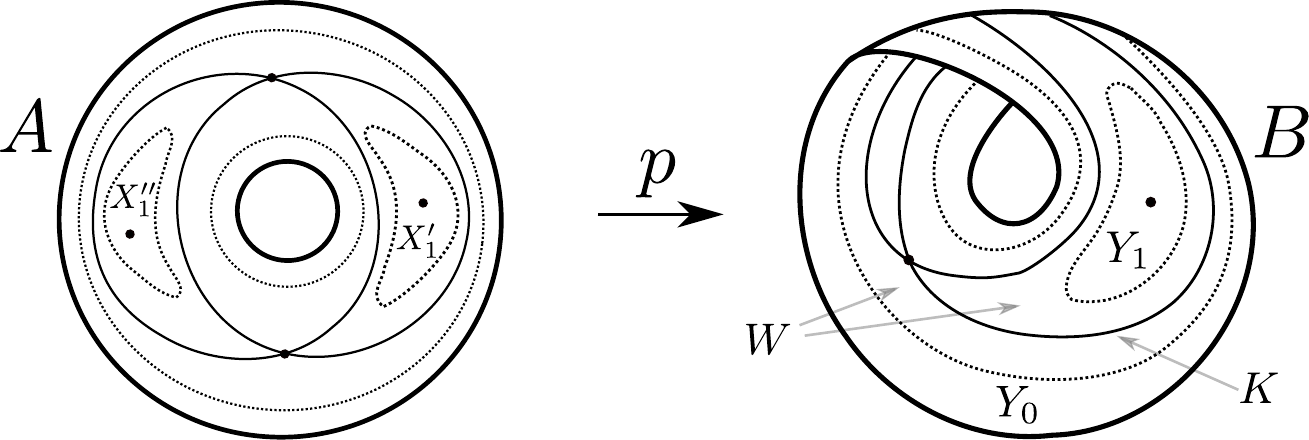}
\caption{}\label{fig:func_mb_2crpt}
\end{figure}
There is also a natural action of $\Stabilizer{\nfunc,\partial\MBand}$ on $\PlCompSet = \{\YYi{1}, \dots, \YYi{n}\} \times\{\pm 1\}$.

\subsection*{Proof that $\Stabilizer{\nfunc,\partial\MBand}/\KerSAct$ freely acts on $\PlCompSet$.}
Let us glue $\partial\MBand$ with a $2$-disk and denote the obtained surface (being therefore a projective plane) with $\hat{B}$.
Let also $\hat{Y}_i$, $i=0,1,\ldots,n$, be the connected component of $\hat{B}\setminus\CrComp$ containing $\YYi{i}$.
Then we have a CW-partition $\Xi$ of $\hat{B}$ whose $0$-cells are critical points of $\nfunc$ belonging to $\CrComp$, $1$-cells are connected components of the complement $\CrComp\setminus\fSing$, and $2$-cells are $\{ \hat{Y}_i \}_{i=0,\ldots,n}$.

Notice that each $\ndif\in\Stabilizer{\nfunc,\partial\MBand}$ extends to a unique \myemph{homeomorphism} $\hat{\ndif}$ of $\hat{B}$ fixed on $\hat{B}\setminus\MBand$.
Moreover, since $\ndif(\CrComp)=\CrComp$, it also follows that $\hat{\ndif}$ is $\Xi$-cellular, i.e. it induces a permutation of cells of $\Xi$.

Suppose $\hat{\ndif}(e)=e$ for some cell $e$ of $\Xi$.
If either
\begin{itemize}
\item $\dim e = 0$ or
\item $\dim e = 1,2$ and $\hat{\ndif}$ preserves its orientation
\end{itemize}
then we will say that $e$ is \myemph{$\hat{\ndif}^{+}$-invariant}.
In particular, $\hat{\ndif}$ has a $\hat{\ndif}^{+}$-invariant cell $\hat{Y}_0$.

Since $\hat{\ndif}$ is also isotopic to $\id_{\hat{B}}$, its Lefschetz number $L(\hat{\ndif}) = \chi(\hat{B}) = 1$.
Then it follows from~\cite[Corollary~5.6]{Maksymenko:MFAT:2010} that
\begin{itemize}
\item either the number of $\hat{\ndif}^{+}$-invariant cells of $\Xi$ is $\chi(\hat{B})=1$, or
\item all cells of $\Xi$ are $\hat{\ndif}^{+}$-invariant.
\end{itemize}

Suppose $\ndif(\YYi{i},+)=(\YYi{i},+)$ for some $i\in\{1,\ldots,n\}$.
Then $\hat{Y}_i$ is $\hat{\ndif}^{+}$-invariant, and so $\hat{\ndif}$ has at least $2 > 1$ $\hat{\ndif}^{+}$-invariant cells.
Hence all cells of $\Xi$ are $\hat{\ndif}^{+}$-invariant, which implies that $\ndif(\YYi{j},+)=(\YYi{j},+)$ for all other $j\in\{1,\ldots,n\}$.
This means that the action of $\Stabilizer{\nfunc,\partial\MBand}/\KerSAct$ on $\PlCompSet$ is free.

\medskip

To construct isomorphism~\eqref{equ:Qf_struct} we need the following two lemmas.

\begin{lemma}\label{lm:Gf_SfdM_W}
$\KerSAct = \Stabilizer{\nfunc,\partial\MBand; \NSubSurf}$.
\end{lemma}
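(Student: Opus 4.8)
The plan is to prove the lemma as two inclusions, $\Stabilizer{\nfunc,\partial\MBand;\NSubSurf}\subseteq\KerSAct$ and $\KerSAct\subseteq\Stabilizer{\nfunc,\partial\MBand;\NSubSurf}$, working on the orientable double covering $p\colon\Cyl\to\MBand$. Write $\ofunc=\nfunc\circ p$, fix a skew-symmetric Hamiltonian like vector field $\fld$ for $\ofunc$ on $\Cyl$ with flow $\flow$, and use the isomorphism $s$ of Lemma~\ref{lm:SttSt} to identify $\Stabilizer{\nfunc,\partial\MBand}$ with $\tStabilizer{\ofunc,\partial\Cyl}$. I would first record the relevant facts: $\CrComp$ is not a single point (otherwise $\YYi{0}$ would be a M\"obius band with a hole instead of an annulus), so $\CrComp$ contains a saddle, $\NSubSurf$ is a connected $\nfunc$-adopted subsurface containing a saddle, and $\chi(\NSubSurf)=\chi(\CrComp)<0$, whence $n\ge1$; moreover $\NSubSurf\cap\partial\MBand=\varnothing$, so the normalization ``$\alpha=0$ on $p^{-1}(\NSubSurf)\cap\partial\Cyl$'' in the definition of $\Stabilizer{\nfunc,\partial\MBand;\NSubSurf}$ is vacuous, and $p^{-1}(\NSubSurf)$ is an $\ofunc$-regular neighborhood of the critical component $p^{-1}(\CrComp)$ in the orientable surface $\Cyl$, saturated by orbits of $\flow$; finally $p^{-1}(\YYi{k})=\XXi{k}'\sqcup\XXi{k}''$ is a pair of $2$-disks ($k=1,\dots,n$) and $p^{-1}(\YYi{0})$ is a pair of annuli.

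For the inclusion $\Stabilizer{\nfunc,\partial\MBand;\NSubSurf}\subseteq\KerSAct$: given $\ndif$ in the left group, put $\odif=s(\ndif)\in\tDiffPl(\Cyl)$, so $\odif(x)=\flow(x,\alpha(x))$ on $p^{-1}(\NSubSurf)$ for a smooth $\alpha$. Since $\flow(x,\alpha(x))$ stays on the orbit of $x$ and $p^{-1}(\NSubSurf)$ is saturated, $\odif$ maps $p^{-1}(\NSubSurf)$ and each of its boundary circles onto themselves preserving the flow orientation. As $\XXi{k}'$ is the only component of $\Cyl\setminus\partial\XXi{k}'$ which is a $2$-disk, $\odif(\XXi{k}')=\XXi{k}'$, and $\odif$ preserves its orientation because it preserves that of $\Cyl$; projecting by $p$ (a diffeomorphism on $\XXi{k}'$) gives $\ndif(\YYi{k})=\YYi{k}$ with $\ndif|_{\YYi{k}}$ orientation-preserving, i.e.\ $\ndif\in\KerSAct$.

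For the reverse inclusion I would argue as follows. Let $\ndif\in\KerSAct$, $\odif=s(\ndif)\in\tDiffPl(\Cyl)$. The first point is that $\odif$ cannot interchange $\XXi{k}'$ and $\XXi{k}''$: since $\odif$ preserves and $\xi$ reverses the orientation of $\Cyl$, such an interchange would force $\ndif|_{\YYi{k}}$ to reverse orientation, against $\ndif\in\KerSAct$; hence $\odif$ fixes each $\XXi{k}'$, $\XXi{k}''$ with orientation, and (being fixed on $\partial\Cyl$) each component of $p^{-1}(\YYi{0})$, hence each connected component of $p^{-1}(\NSubSurf)$. Capping $\partial\Cyl$ by two disks turns $\Cyl$ into $S^2$ and extends $\odif$ by the identity to $\hat{\odif}\in\DiffPl(S^2)$, which is isotopic to $\id_{S^2}$, so $L(\hat{\odif})=\chi(S^2)=2$. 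The graph $p^{-1}(\CrComp)$, with its critical points, is the $1$-skeleton of a CW-partition of $S^2$ whose $2$-cells are the $2n$ disks $\XXi{k}',\XXi{k}''$ and the two disks capping the components of $p^{-1}(\YYi{0})$; all $2n+2\ge 4>2$ of them are $\hat{\odif}^{+}$-invariant, so by~\cite[Corollary~5.6]{Maksymenko:MFAT:2010} \emph{every} cell is $\hat{\odif}^{+}$-invariant. Thus $\odif$ fixes each critical point in $p^{-1}(\CrComp)$ and preserves each of its separatrices with the flow orientation; and, as any regular level curve of $\ofunc$ inside $p^{-1}(\NSubSurf)$ is the outermost level curve of its value inside a unique such $2$-cell, $\odif$ preserves it with its flow orientation as well. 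Hence $\odif|_{p^{-1}(\NSubSurf)}$ leaves invariant every connected component of every level set of $\ofunc$, i.e.\ $\odif|_{p^{-1}(\NSubSurf)}\in\FolStabilizer{\ofunc|_{p^{-1}(\NSubSurf)}}$ (there are no local extremes of $\ofunc$ there, so the tangential condition is vacuous). Finally, every diffeomorphism of a regular neighborhood, in an orientable surface, of a critical component containing a saddle which preserves $\ofunc$ and each connected component of each of its level sets is a shift along the Hamiltonian like flow — the contractible case of Theorem~\ref{th:charact_Stabf}, cf.~\cite[\S6]{Maksymenko:DefFuncI:2014}; applying this to $\odif|_{p^{-1}(\NSubSurf)}$ produces a smooth $\alpha\colon p^{-1}(\NSubSurf)\to\bR$ with $\odif(x)=\flow(x,\alpha(x))$ there, the normalization on $p^{-1}(\NSubSurf)\cap\partial\Cyl=\varnothing$ holds automatically, and therefore $\odif\in\tStabilizer{\ofunc,\partial\Cyl;p^{-1}(\NSubSurf)}$, i.e.\ $\ndif\in\Stabilizer{\nfunc,\partial\MBand;\NSubSurf}$.

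The hard part will be the combinatorial rigidity used in the last paragraph — that a diffeomorphism in $\KerSAct$ can neither swap the two disks over a $\YYi{k}$ nor permute the regular level curves and separatrices of $\ofunc$ inside $p^{-1}(\NSubSurf)$ — together with correctly quoting the localized shift-function statement on the orientable cover; the remaining steps are routine bookkeeping with $p$ and $s$.
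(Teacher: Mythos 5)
Your proof is correct, but both inclusions take a genuinely different route from the paper's. For $\Stabilizer{\nfunc,\partial\MBand;\NSubSurf}\subseteq\KerSAct$ the paper first applies Corollary~\ref{cor:SX_Sidf} to isotope $\ndif$ inside $\Stabilizer{\nfunc,\partial\MBand;\NSubSurf}$ to a diffeomorphism $\ndif'$ fixed on $\NSubSurf$, which then visibly lies in $\KerSAct$, while you argue directly from the shift-function property that each closed orbit of $\flow$ in the saturated set $p^{-1}(\NSubSurf)$ is preserved and each disk $\XXi{k}'$ is picked out topologically; this is shorter and bypasses the homotopy-equivalence input. For $\KerSAct\subseteq\Stabilizer{\nfunc,\partial\MBand;\NSubSurf}$ the paper just records that $\odif$ preserves each $\XXi{k}'$, $\XXi{k}''$ with orientation and then quotes \cite[Lemma~7.4]{Maksymenko:DefFuncI:2014} to obtain the shift function; you instead re-derive it from scratch through the cap-off-to-$S^2$ plus CW/Lefschetz rigidity argument (which the paper deploys only for the free-action statement in Theorem~\ref{th:pi0S_struct}, not here) to show $\odif|_{p^{-1}(\NSubSurf)}\in\FolStabilizer{\ofunc|_{p^{-1}(\NSubSurf)}}$, and then invoke the shift-function description. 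That route works, and your preliminary checks ($\CrComp$ contains a saddle, $\chi(\CrComp)\le -1$ hence $n\ge 1$ so $2n+2>\chi(S^2)$, and the normalization on $p^{-1}(\NSubSurf)\cap\partial\Cyl=\varnothing$ is vacuous) are right; but note that the Lefschetz detour proves strictly more than is needed, and the final citation is slightly off: Theorem~\ref{th:charact_Stabf} as stated identifies $\Sh{\fld}(\DFunc)$ with $\StabilizerId{\ofunc}$, not $\FolStabilizer{\ofunc}$, so the fact you actually use (on an orientable $\ofunc$-regular neighborhood of a saddle component, level-component-preserving diffeomorphisms are $\flow$-shifts) is precisely \cite[Lemma~7.4]{Maksymenko:DefFuncI:2014}, i.e.\ the very lemma the paper quotes and you set out to avoid.
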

\begin{proof}
Let $\ndif\in\Stabilizer{\nfunc,\partial\MBand;\NSubSurf}$.
Then Corollary~\ref{cor:SX_Sidf} implies that $\ndif$ is isotopic in $\Stabilizer{\nfunc,\partial\MBand;\NSubSurf}$ to a diffeomorphism $\ndif'\in\Stabilizer{\nfunc,\partial\MBand \cup\NSubSurf}$.
Hence $\ndif$ and $\ndif'$ act on $\PlCompSet$ in the same way.
But $\ndif'$ is fixed on $\NSubSurf$, and so on $\partial\YYi{i}$ for all $i=1,\ldots,k$.
Whence $\ndif'$ leaves invariant each $\YYi{i}$ and preserves its orientation, that is $\ndif'\in\KerSAct$.
Therefore so does $\dif$, and thus $\dif\in\KerSAct$ as well, i.e.\! $\Stabilizer{\nfunc,\partial\MBand; \NSubSurf} \subset \KerSAct$.

Conversely, let $\ndif\in\KerSAct$, $p:\Cyl\to\MBand$ be the oriented double covering of $\MBand$, and $\odif = s(\ndif) \in \tStabilizer{\ofunc,\partial\Cyl}$ be a unique lifting of $\ndif$ fixed on $\partial\Cyl$.
Then for each $i=1,\ldots,n$ the preimage $p^{-1}(\YYi{i})$ consists of two connected components $\XXi{i}'$ and $\XXi{i}''$, see Figure~\ref{fig:func_mb_2crpt}.
The assumption that $\ndif(\YYi{i})=\YYi{i}$ and $\ndif$ preserves orientation of $\YYi{i}$ means that $\odif$ leaves invariant both $\XXi{i}'$ and $\XXi{i}''$ and preserves their orientations.
Hence by~\cite[Lemma~7.4]{Maksymenko:DefFuncI:2014}, $\odif$ has a unique shift function $\alpha_{\odif}:p^{-1}(\NSubSurf) \to \bR$.
In other words, $\odif\in\tStabilizer{\ofunc,\partial\Cyl;p^{-1}(\NSubSurf)}$, whence by definition $\ndif=s^{-1}(\odif) = \rho(\odif)\in\Stabilizer{\nfunc,\partial\MBand;\NSubSurf}$.
\end{proof}

\begin{lemma}\label{lm:pi0StfdM_U}
There exists a commutative diagram
\[
\xymatrix{
    \pi_0\FolStabilizer{\nfunc,\partial\MBand; \NSubSurf} \ar[rrr]^-{j} \ar[d]^-{\cong} &&&
    \pi_0\Stabilizer{\nfunc,\partial\MBand; \NSubSurf} \ar[d]^-{\psi}_-{\cong} \\
\bZ \times \prod\limits_{i=0}^{n} \pi_0\FolStabilizerIsotId{\nfunc|_{\YYi{i}},\partial\YYi{i}} \ar[rrr]^-{\id_{\bZ} \times j_0\times \cdots \times j_n} &&&
\bZ \times \prod\limits_{i=0}^{n} \pi_0\StabilizerIsotId{\nfunc|_{\YYi{i}},\partial\YYi{i}}
}
\]
where $j, j_0,\ldots,j_n$ are induced by natural inclusions and the vertical arrows are isomorphisms.
In particular, we get an isomorphism~\eqref{equ:Qf_struct}:
\[
\pi_0\KerSAct \cong \pi_0 \Stabilizer{\nfunc,\partial\MBand; \NSubSurf} \cong
\bZ \times \prod\limits_{i=0}^{n} \pi_0\StabilizerIsotId{\nfunc|_{\YYi{i}},\partial\YYi{i}} =
\bZ \times \prod_{i=0}^{n} \ST{\YYi{i}}.
\]
\end{lemma}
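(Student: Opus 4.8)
The plan is to reduce the computation to a product of stabilizers on the pieces $\YYi{0},\dots,\YYi{n}$ cut out by $\partial\NSubSurf$, and to account for the single extra $\bZ$ by the annular piece $\YYi{0}$. First, by Lemma~\ref{lm:Gf_SfdM_W} we already know $\KerSAct=\Stabilizer{\nfunc,\partial\MBand;\NSubSurf}$. Since $\NSubSurf$ is a connected $\nfunc$-adapted subsurface and $\CrComp$ — hence $\NSubSurf$ — contains a saddle critical point of $\nfunc$ (otherwise $\CrComp$ would be a single local extreme point, so nearby regular level components would be null-homotopic circles, contradicting that by the proof of Theorem~\ref{th:unique_cr_level} the vertex $p(\CrComp)$ has an incident \bdEdge-edge), Corollary~\ref{cor:SX_Sidf}, applied with the $\nfunc$-adapted submanifold $\partial\MBand$ and the subsurface $\NSubSurf$, shows that the inclusions
\[
\FolStabilizer{\nfunc,\partial\MBand\cup\NSubSurf}\subset\FolStabilizer{\nfunc,\partial\MBand;\NSubSurf},\qquad
\Stabilizer{\nfunc,\partial\MBand\cup\NSubSurf}\subset\Stabilizer{\nfunc,\partial\MBand;\NSubSurf}
\]
are homotopy equivalences. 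This identifies the two groups in the right column of the diagram, up to homotopy, with $\Stabilizer{\nfunc,\partial\MBand\cup\NSubSurf}$ and $\FolStabilizer{\nfunc,\partial\MBand\cup\NSubSurf}$.

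Next I would establish, compatibly with the inclusions $\FolStabilizer\subset\Stabilizer$, a product decomposition
\[
\Stabilizer{\nfunc,\partial\MBand\cup\NSubSurf}\ \simeq\ \prod\limits_{i=0}^{n}\Stabilizer{\nfunc|_{\YYi{i}},\partial\YYi{i}},
\qquad
\FolStabilizer{\nfunc,\partial\MBand\cup\NSubSurf}\ \simeq\ \prod\limits_{i=0}^{n}\FolStabilizer{\nfunc|_{\YYi{i}},\partial\YYi{i}}.
\]
Indeed, a diffeomorphism fixed on $\NSubSurf$ is fixed on $\partial\NSubSurf$, hence leaves each component $\YYi{i}$ of $\overline{\MBand\setminus\NSubSurf}$ invariant and restricts on it to an element of $\Stabilizer{\nfunc|_{\YYi{i}},\partial\YYi{i}}$ (and to the identity on $\NSubSurf$); conversely one wants to glue a tuple of such restrictions with $\id$ on $\NSubSurf$. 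The one delicate point — smoothness of the glued map along $\partial\NSubSurf$ — is handled as in the orientable case by passing to the ``neighborhood'' versions of all the groups, where a diffeomorphism fixed on a \emph{neighborhood} of $\NSubSurf$ (resp.\ of each $\partial\YYi{i}$) glues smoothly, and invoking Theorem~\ref{th:hom_equ} on $\MBand$ for the submanifold $\NSubSurf$, and on each $\YYi{i}$ for the submanifold $\partial\YYi{i}$, to see that these neighborhood versions are homotopy equivalent to the groups above. This is the step that will require the most care: one must check that the homotopy equivalences supplied by Theorem~\ref{th:hom_equ} are compatible with the product structure and restrict correctly to the $\FolStabilizer$-subgroups. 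Passing to $\pi_0$ then gives $\pi_0\Stabilizer{\nfunc,\partial\MBand\cup\NSubSurf}\cong\prod_{i=0}^{n}\pi_0\Stabilizer{\nfunc|_{\YYi{i}},\partial\YYi{i}}$, and likewise for the $\FolStabilizer$-version.

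Finally I would treat the individual factors. For $i\geq1$ the piece $\YYi{i}$ is a $2$-disk, so $\Diff(\YYi{i},\partial\YYi{i})$ is contractible; hence $\DiffId(\YYi{i},\partial\YYi{i})=\Diff(\YYi{i},\partial\YYi{i})$, and therefore $\Stabilizer{\nfunc|_{\YYi{i}},\partial\YYi{i}}=\StabilizerIsotId{\nfunc|_{\YYi{i}},\partial\YYi{i}}$ and $\FolStabilizer{\nfunc|_{\YYi{i}},\partial\YYi{i}}=\FolStabilizerIsotId{\nfunc|_{\YYi{i}},\partial\YYi{i}}$. For $i=0$ the piece $\YYi{0}$ is an annulus $S^1\times[0,1]$, and Lemma~\ref{lm:cyl:rel_StStIsotId} gives isomorphisms $\pi_0\Stabilizer{\nfunc|_{\YYi{0}},\partial\YYi{0}}\cong\bZ\times\pi_0\StabilizerIsotId{\nfunc|_{\YYi{0}},\partial\YYi{0}}$ and $\pi_0\FolStabilizer{\nfunc|_{\YYi{0}},\partial\YYi{0}}\cong\bZ\times\pi_0\FolStabilizerIsotId{\nfunc|_{\YYi{0}},\partial\YYi{0}}$, compatible via $j_0$, the $\bZ$-factor being generated by the class of a Dehn twist. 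Assembling these with the decomposition of the previous paragraph and the reduction of the first one yields the commutative square of the lemma with both vertical arrows isomorphisms (the single common $\bZ$ coming from $\YYi{0}$); commutativity holds because on the $\FolStabilizer$-subgroups the map $\psi$ is by construction the left vertical isomorphism followed by $\id_{\bZ}\times j_0\times\cdots\times j_n$. Reading off the bottom-right corner and using $\ST{\YYi{i}}=\pi_0\StabilizerIsotId{\nfunc|_{\YYi{i}},\partial\YYi{i}}$ by~\eqref{equ:pi0stab_pi1orb} (valid since each $\YYi{i}$ has non-empty boundary), together with Lemma~\ref{lm:Gf_SfdM_W} and the first paragraph, gives the asserted isomorphism~\eqref{equ:Qf_struct}.
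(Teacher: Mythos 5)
Your proposal is correct and follows essentially the same route as the paper: reduce $\Stabilizer{\nfunc,\partial\MBand;\NSubSurf}$ via Corollary~\ref{cor:SX_Sidf} and Theorem~\ref{th:hom_equ} to the ``fixed on a neighborhood of $\partial\MBand\cup\NSubSurf$'' group, split that group as a product over the pieces $\YYi{0},\dots,\YYi{n}$ by restriction, and handle the single annular factor $\YYi{0}$ with Lemma~\ref{lm:cyl:rel_StStIsotId} to produce the extra $\bZ$. You are a bit more explicit than the paper about why $\NSubSurf$ contains a saddle (justifying the hypothesis of Corollary~\ref{cor:SX_Sidf}), which is a welcome clarification rather than a divergence.
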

\begin{proof}
As $j$ is a monomorphism, it suffices to construct an isomorphism $\psi$ inducing left arrow.
Due to Theorem~\ref{th:hom_equ} and Corollary~\ref{cor:SX_Sidf} the following inclusions are homotopy equivalences:
\begin{gather*}
    \StabilizerNbh{\nfunc,\partial\MBand\cup\NSubSurf}
    \ \subset \
    \Stabilizer{\nfunc,\partial\MBand\cup\NSubSurf}
    \ \subset \
    \Stabilizer{\nfunc,\partial\MBand;\NSubSurf},
\end{gather*}
whence we need to compute the group $\pi_0 \StabilizerNbh{\nfunc,\partial\MBand\cup\NSubSurf}$ instead.
Further notice that there is a natural isomorphism
\begin{align*}
&\alpha:
\StabilizerNbh{\nfunc,\partial\MBand\cup\NSubSurf} \ \to \
\prod\limits_{i=0}^{n} \StabilizerNbh{\nfunc|_{\YYi{i}},\,\partial\YYi{i}}, &
\alpha(\ndif) &= \bigl( \ndif|_{\YYi{0}},  \ldots,  \ndif|_{\YYi{n}} \bigr),
\end{align*}
for $\ndif\in\StabilizerNbh{\nfunc,\partial\MBand\cup\NSubSurf}$ inducing
isomorphism of the corresponding $\pi_0$-groups:
\begin{align*}
\pi_0\StabilizerNbh{\nfunc,\partial\MBand\cup\NSubSurf} &\cong
 \prod\limits_{i=0}^{n} \pi_0\StabilizerNbh{\nfunc|_{\YYi{i}},\,\partial\YYi{i}}.
\end{align*}
Since $\YYi{0}$ is an annulus, we get from Lemma~\ref{lm:cyl:rel_StStIsotId} that
\[
    \pi_0\StabilizerNbh{\nfunc|_{\YYi{0}},\,\partial\YYi{0}} \ \cong \
    \bZ \times \pi_0\StabilizerNbhIsotId{\nfunc|_{\YYi{0}},\,\partial\YYi{0}}. 
\]
Moreover, $\StabilizerNbh{\nfunc|_{\YYi{i}},\,\partial\YYi{i}} = \StabilizerNbhIsotId{\nfunc|_{\YYi{i}},\,\partial\YYi{i}}$ for all others $2$-disks $\YYi{i}$, $i=1,\ldots,n$, and so
\[
    \pi_0 \StabilizerNbh{\nfunc|_{\YYi{i}},\,\partial\YYi{i}} =
    \pi_0 \StabilizerNbhIsotId{\nfunc|_{\YYi{i}},\,\partial\YYi{i}} \cong
    \pi_0 \StabilizerIsotId{\nfunc|_{\YYi{i}},\,\partial\YYi{i}}.
\]
This gives the required isomorphism $\psi$.

It remains to note that $\psi$ maps $\pi_0\FolStabilizer{\nfunc,\partial\MBand; \NSubSurf}$ (regarded a subgroup of $\pi_0\Stabilizer{\nfunc,\partial\MBand; \NSubSurf}$) onto $\bZ \times \prod\limits_{i=0}^{n} \pi_0\FolStabilizerIsotId{\nfunc|_{\YYi{i}},\partial\YYi{i}}$.
We leave the details to the reader.
\end{proof}

Theorem~\ref{th:pi0S_struct} is completed.


\begin{thebibliography}{10}

\bibitem{BolsinovFomenko:ENG:2004}
A.~V. Bolsinov and A.~T. Fomenko.
\newblock {\em Integrable {H}amiltonian systems}.
\newblock Chapman \& Hall/CRC, Boca Raton, FL, 2004.
\newblock Geometry, topology, classification, Translated from the 1999 Russian
  original.

\bibitem{Rezende_Ledesma_ManzoliNeto_Vago:TA:2018}
Ketty~A. de~Rezende, Guido G.~E. Ledesma, Oziride Manzoli-Neto, and Gioia~M.
  Vago.
\newblock Lyapunov graphs for circle valued functions.
\newblock {\em Topology Appl.}, 245:62--91, 2018.

\bibitem{Feshchenko:MFAT:2016}
B.~Feshchenko.
\newblock Actions of finite groups and smooth functions on surfaces.
\newblock {\em Methods Funct. Anal. Topology}, 22(3):210--219, 2016.

\bibitem{Feshchenko:Zb:2015}
B.~G. Feshchenko.
\newblock Deformation of smooth functions on $2$-torus whose {K}ronrod-{R}eeb
  graphs is a tree.
\newblock In {\em Topology of maps of low-dimensional manifolds}, volume~12 of
  {\em Pr. Inst. Mat. Nats. Akad. Nauk Ukr. Mat. Zastos.}, pages 204--219.
  Nats\=\i onal. Akad. Nauk Ukra\"\i ni, \=Inst. Mat., Kiev, 2015.

\bibitem{Kronrod:UMN:1950}
A.~S. Kronrod.
\newblock On functions of two variables.
\newblock {\em Uspehi Matem. Nauk (N.S.)}, 5(1(35)):24--134, 1950.

\bibitem{Kudryavtseva:SpecMF:VMU:2012}
E.~A. Kudryavtseva.
\newblock Special framed {M}orse functions on surfaces.
\newblock {\em Vestnik Moskov. Univ. Ser. I Mat. Mekh.}, (4):14--20, 2012.

\bibitem{Kudryavtseva:MathNotes:2012}
E.~A. Kudryavtseva.
\newblock The topology of spaces of {M}orse functions on surfaces.
\newblock {\em Math. Notes}, 92(1-2):219--236, 2012.
\newblock Translation of Mat. Zametki {{\bf{9}}2} (2012), no. 2, 241--261.

\bibitem{Kudryavtseva:MatSb:ENG:2013}
E.~A. Kudryavtseva.
\newblock On the homotopy type of spaces of {M}orse functions on surfaces.
\newblock {\em Sb. Math.}, 204(1):75--113, 2013.

\bibitem{Kudryavtseva:ENG:DAN2016}
E.~A. Kudryavtseva.
\newblock Topology of spaces of functions with prescribed singularities on the
  surfaces.
\newblock {\em Dokl. Akad. Nauk}, 93(3):264--266, 2016.

\bibitem{Maksymenko:AGAG:2006}
S.~Maksymenko.
\newblock Homotopy types of stabilizers and orbits of {M}orse functions on
  surfaces.
\newblock {\em Ann. Global Anal. Geom.}, 29(3):241--285, 2006.

\bibitem{Maksymenko:TrMath:2008}
S.~Maksymenko.
\newblock Homotopy dimension of orbits of {M}orse functions on surfaces.
\newblock {\em Travaux Math\'ematiques}, 18:39--44, 2008.

\bibitem{Maksymenko:MFAT:2010}
S.~Maksymenko.
\newblock Functions on surfaces and incompressible subsurfaces.
\newblock {\em Methods Funct. Anal. Topology}, 16(2):167--182, 2010.

\bibitem{Maksymenko:ProcIM:ENG:2010}
S.~Maksymenko.
\newblock Functions with isolated singularities on surfaces.
\newblock {\em Geometry and topology of functions on manifolds. Pr. Inst. Mat.
  Nats. Akad. Nauk Ukr. Mat. Zastos.}, 7(4):7--66, 2010.

\bibitem{Maksymenko:UMZ:ENG:2012}
S.~Maksymenko.
\newblock Homotopy types of right stabilizers and orbits of smooth functions on
  surfaces.
\newblock {\em Ukrainian Math. Journal}, 64(9):1186--1203, 2012.

\bibitem{Maksymenko:DefFuncI:2014}
S.~Maksymenko.
\newblock Deformations of functions on surfaces by isotopic to the identity
  diffeomorphisms.
\newblock page arXiv:math/1311.3347, 2014.

\bibitem{MaksymenkoFeshchenko:UMZ:ENG:2014}
S.~Maksymenko and B.~Feshchenko.
\newblock Homotopy properties of spaces of smooth functions on 2-torus.
\newblock {\em Ukrainian Math. Journal}, 66(9):1205--1212, 2014.

\bibitem{MaksymenkoFeshchenko:MS:2015}
S.~Maksymenko and B.~Feshchenko.
\newblock Orbits of smooth functions on $2$-torus and their homotopy types.
\newblock {\em Matematychni Studii}, 44(1):67--84, 2015.

\bibitem{MaksymenkoFeshchenko:MFAT:2015}
S.~Maksymenko and B.~Feshchenko.
\newblock Smooth functions on $2$-torus whose {K}ronrod-{R}eeb graph contains a
  cycle.
\newblock {\em Methods Funct. Anal. Topology}, 21(1):22--40, 2015.

\bibitem{Reeb:ASI:1952}
Georges Reeb.
\newblock {\em Sur certaines propri\'et\'es topologiques des vari\'et\'es
  feuillet\'ees}.
\newblock Actualit\'es Sci. Ind., no. 1183. Hermann \& Cie., Paris, 1952.
\newblock Publ. Inst. Math. Univ. Strasbourg 11, pp. 5--89, 155--156.

\bibitem{Sergeraert:ASENS:1972}
Francis Sergeraert.
\newblock Un th\'eor\`eme de fonctions implicites sur certains espaces de
  {F}r\'echet et quelques applications.
\newblock {\em Ann. Sci. \'Ecole Norm. Sup. (4)}, 5:599--660, 1972.

\end{thebibliography}

\def\cprime{$'$}

\end {document}